\numberwithin{equation}{section}
\DeclareMathOperator{\supp}{supp}
\newtheorem{thm}{\hskip\parindent {Theorem}}[section]
\newtheorem{lem}{\hskip\parindent {Lemma}}[section]
\newtheorem{lemma}{\hskip\parindent {Lemma}}[section]
\newtheorem{pro}{\hskip\parindent{Proposition}}[section]
\newtheorem{prop}{\hskip\parindent{Proposition}}[section]
\theoremstyle{definition}
\newtheorem{definition}{\hspace\parindent{Definition}}[section]
\newcommand{\Z}{\mathbb{Z}}
\newcommand{\R}{\mathbb{R}}
\def\R{\mathbb R}
\def\12{\frac{1}{2}}
\def\M{\mathbb{M}}
\def\binfty {\dot{B}_\infty^{\alpha,\infty}}
\def\b1 {\dot{B}_1^{-\alpha,1}}
\begin{document}

\setcounter{tocdepth}{3} \allowdisplaybreaks

\title[Singular integrals with reflection groups]
{Singular integrals associated with reflection groups on Euclidean space}

\author[Y. Han, J. Li, C. Tan, Z. Wang, X. Wu ]{Yongsheng Han, Ji Li, Chaoqiang Tan, Zipeng Wang, Xinfeng Wu }

\subjclass[2010]{Primary 42B35; Secondary 43A85, 42B25, 42B30}

\keywords{ singular integral, reflection group, $T1$ theorem, BMO space}

\begin{abstract}
    In the field of harmonic analysis, geometric considerations are frequently crucial. Specifically, group actions such as translations, dilations, and rotations on Euclidean space are instrumental. The objective of this paper is to extend the study of singular integrals to include the effects of group reflections on Euclidean space and to establish a $T1$ theorem for these singular integrals.
\end{abstract}

\maketitle
\tableofcontents
\section{Introduction and statement of main results}

It is well established that group structures play a pivotal role in harmonic analysis, with actions such as translations, dilations, and rotations on Euclidean space serving as foundational elements in the study of singular integrals, as referenced by prior works \cite{CZ, CW1, CW2, FR, KS, KV, MC, NRW, NW, PS, SO, S1, S2, S3, S4, S5, SW, ST}. Our primary objective is to expand upon this existing framework by developing the theory of singular integrals under the influence of group reflections on Euclidean spaces.

To elaborate, consider the standard inner product in $\mathbb{R}^{N}$ defined as $\langle x,y\rangle=\sum\limits_{j=1}^Nx_jy_j$ and the associated norm $|x|=\left(\sum\limits_{j=1}^N|x_j|^2\right)^\frac{1}{2}$. For a non-zero vector $\alpha\in \mathbb{R}^{N}$, the reflection map $\sigma_\alpha$ relative to the hyperplane $\alpha^{\perp}$ orthogonal to $\alpha$ is given by:
$$\sigma_\alpha(x)=x-2\frac{\langle x,\alpha\rangle}{|\alpha|^2}\alpha.$$

We will focus on a normalized root system $R$, which constitutes a
finite subset of $\R^N\setminus \{0\}$, adhering to specific
criteria such as the intersection $R\cap \R\alpha=\{\pm \alpha\},$
and the invariance under reflections $\sigma_\alpha R = R$ for every
$\alpha \in R$, with the stipulation that $|\alpha| = \sqrt{2}$. The
set of reflections $\{\sigma_{\alpha}: \alpha\in R\}$ generates a
finite group $G$, known as the finite reflection group associated
with the root system $R$. Throughout this work, we shall fix a normalized root system $R$ associated with reflection group $G$.

Consider the set  $B(x,r):=\{y\in
\mathbb{R}^{N}:|x-y|<r\}$, which represents the Euclidean ball centered at $x \in \mathbb{R}^{N}$ with a radius $r > 0$. Let us define the distance function
$d(x,y):=\min\limits_{\sigma\in G}|x-\sigma(y)|$
between two $G$-orbits $\mathcal{O}(x)$ and $\mathcal{O}(y)$. It is evident that $d(x,y)$ is non-negative and symmetric, meaning $0 \leq d(x,y) = d(y,x) \leq |x - y|$, and satisfies the triangle inequality such that $d(x,y) \leq d(x,z) + d(z,y)$. Moreover, $d(x,y)$ vanishes when $y$ is a reflection of $x$, that is, $y = \sigma(x)$ for each $\sigma \in G$.

For any point $ x\in\mathbb{R}^{N}$ and a given radius $r > 0$, we introduce the set
$\mathcal{O}_B(x,r):=\{y\in \mathbb{R}^{N}:d(x,y)<r\},$
which is noteworthy for its invariance under the reflection group actions, as it holds that $\mathcal{O}_B(x,r)=\mathcal{O}_B(\sigma(x), r)$  for every $\sigma\in G.$

To begin with the theory of singular integrals associated with the reflection group $G$, we commence with an illustrative example.
Let $\phi(x)$ be a Schwartz function in $\mathbb{R}$ and define
$$K(x,y)=\sum\limits_k 2^{kN} \phi(2^kd(x,y)).$$

We observe that $K(x,y)$ remains invariant under the reflection group actions; that is, $K(x,y)=K(\sigma(x),
y)=K(x,\sigma(y))=K(\sigma(x), \sigma(y)),$ for all $\sigma \in G.$

By applying size and smoothness estimates for $2^{kN} \phi(2^kd(x,y))$ and inequalities $|d(x,y)-d(x',y)|\le d(x,x')$ and $|d(x,y)-d(x,y')|\le d(y,y')$, we deduce that for $d(x,y)\not=0,$
$$|K(x,y)|\lesssim \frac{1}{(d(x,y))^N},$$
$$|K(x,y)-K(x',y)|\lesssim \frac{d(x,x')}{(d(x,y))^{N+1}},\qquad \text{if }d(x,x')\leq \frac{1}{2}d(x,y),$$
and
$$|K(x,y)-K(x,y')|\lesssim \frac{d(y,y')}{(d(x,y))^{N+1}},\qquad \text{if }d(y,y')\leq \frac{1}{2}d(x,y).$$

It is important to note that each function $K(x,y)$ fulfilling the aforementioned smoothness conditions is invariant under the reflection group actions as previously stated. In fact, by taking $x'=\sigma(x)$ for each $\sigma\in G$ in the aforementioned smoothness condition, we have that for $d(x,y)\not=0,$
$$|K(x,y)-K(\sigma(x),y)|\lesssim \frac{d(x,\sigma(x))}{(d(x,y))^{N+1}}=0$$
since $d(x, \sigma(x))=0$, which implies $K(x,y)=K(\sigma(x),y).$  Similarly, for $d(x,y)\not=0, K(x,y)=K(x,\sigma(y)).$

To describe the theory of singular integrals associated with reflection group $G$, we first define the $G$-invariant H\"older space. Let $C^\eta_G(\R^N)$,  $\eta > 0$, denote the space of continuous functions $f$ that satisfy
$$\|f\|_{G,\eta}:=\sup\limits_{d(x,y)\ne 0} \frac{|f(x)-f(y)|}{d(x, y)^{\eta}}<\infty $$
and let $C^\eta_{G,,0}(\R^N)$ consist of those functions $f\in C^\eta_G(\R^N)$ with compact support.
It is crucial to note that if $f\in C^\eta_{G}(\R^N)$, then $f(x)=f(\sigma(x))$ for all $\sigma\in G.$

\begin{definition}\label{def1.1}
Let $T$ be an operator initially defined as a mapping from $C_{G,0}^\eta(\R^N)$ to $(C_{G,0}^\eta(\R^N)'$ for $\eta > 0$.
We say that $T$ is a singular integral operator if its kernel $K(x,y)$ satisfies the following estimates: There exist some $0<\varepsilon\le 1$ and $d(x,y)\not=0,$ such that
\begin{equation}\label{1.1}
|K(x,y)|\le C \frac{1}{(d(x,y))^N},
\end{equation}
\begin{equation}\label{1.2}
|K(x,y)-K(x',y)|\le C \frac{d(x,x')^\varepsilon}{(d(x,y))^{N+\varepsilon}} \quad \text{if }\ d(x,x')\leq \frac{1}{2}d(x,y),
\end{equation}
\begin{equation}\label{1.3}
|K(x,y)-K(x,y')|\le C
\frac{d(y,y')^\varepsilon}{(d(x,y))^{N+\varepsilon}} \quad \text{if }\ d(y,y')\leq \frac{1}{2}d(x,y),
\end{equation}
and furthermore, for $f, g\in C_{G,0}^\eta(\R^N)$,
$$\langle T(f), g\rangle =\int\limits_{\R^N}\int\limits_{\R^N} K(x,y) f(y) g(x)dydx$$
whenever the integral is absolutely convergent.
\end{definition}

As previously mentioned, the kernel $K(x,y)$ remains invariant under the reflection group $G$'s actions.
It is a well-established fact that classical singular kernels exhibit singular behavior along the diagonal $\Delta=\{(x,x):x\in \R^N\}$. However, the singular kernels as defined in Definition \ref{def1.1} display their singularities on $\mathcal{O}(\Delta)=\{(x,\sigma(x)): x\in \R^N, \sigma\in G\}.$

The principal results of this paper are centered around establishing the $T1$ theorem, which serves as a criterion for the boundedness of these singular integrals associated with the reflection group.
To lay the groundwork, we must extend the definition of singular integrals from above to encompass functions within ${{C}}_{G,b}^\eta(\R^N),$ the space of bounded $G$-invariant H\"{o}lder functions. The approach here is to define $T(f)$ for $f\in
{{C}}_{G,b}^\eta(\R^N)$ as a distribution over ${{C}}^\eta_{G,0,0}(\R^N)=\{ g\in {{C}}^\eta_{G,0}: \int_{\R^N} g(x)dx=0\}.$
To achieve this, given $g\in {{C}}^\eta_{G,0,0}(\R^N)$ with support contained in $\mathcal{O}_{B}(x_0, R)$ for some $x_0\in \R^N$ and $R > 0$,
we utilize a function $\xi$ such that $\xi (y)= 1$ for $y\in
\mathcal{O}_{B}(x_0, 2R)$  and $\xi(y) = 0$ for $y\in \big(\mathcal{O}_{B}(x_0, 4R )\big)^c.$
We decompose $f(y)=\xi(y)f(y) + (1-\xi(y))f(y)$ and formally have $\langle Tf,
g\rangle=\langle T(f\xi), g\rangle +\langle T(1-\xi)f, g\rangle.$
The term $\langle T(f\xi), g\rangle$ is well-defined. Due to the cancellation property of $g$ and the fact that $d(x,y) > 0$ for $x\in \supp g$ and $y\in \supp (1-\xi),$ we can express
$$\langle T(1-\xi)f, g\rangle
=\int\limits_{\R^N}\int\limits_{\R^N}
[K(x,y)-K(x_0,y)](1-\xi(y))f(y)g(x)dy dx.$$
Notice that for $x\in \mathcal{O}_{B}(x_0, R)$ and $y\notin \mathcal{O}_{B}(x_0, 2R),$ it holds that $d(x,x_0)\le R\le \frac{1}{2}d(x_0,y)$. Hence,
$$\int\limits_{\mathcal{O}_{B}(x_0, R)}\int\limits_{(\mathcal{O}_{B}(x_0, 2R))^c} |K(x,y)-K(x_0,y)||g(x)|dy dx
\le C\|g\|_1,$$
which implies that $\langle Tf, g\rangle$ is well-defined. Consequently, $T(f)$ is a distribution on ${{C}}^\eta_{G,0,0}(\R^N).$
Now, the condition $T(1)=0$ means that $\langle T1,f\rangle=0$ for any $f\in {{C}}^\eta_{G,0,0}(\R^N)$. This is synonymous with the statement that
$\int\limits_{\R^N}T^*(f)(x) dx=0$
for any function $f$ within the space ${{C}}^\eta_{G,0,0}(\R^N),$ where $T^*$ represents the adjoint of the operator $T$.
The condition $T^*(1)=0$ is defined in an analogous manner.

We now define the {\it weak boundedness property} as follows.

\begin{definition}\label{def1.2}
    The singular integral operator $T$, equipped with the kernel $K(x,y)$ as specified in Definition \ref{def1.1}, is said to have the weak boundedness property if there exist positive $\eta$ and a finite constant $C$ such that the following inequality holds:
\begin{equation}\label{1.4}
    |\langle g, Tf\rangle |\le C r^N 
\end{equation}
    for all $f, g$ in $C^\eta_{G,0}(\R^N)$, provided the supports of $f$ and $g$ are contained within $\mathcal{O}_B(y_0,r)$ and $\mathcal{O}_B(x_0,r)$, respectively. Here, $x_0$ and $y_0$ are points in $\mathbb R^N$, $r$ is a positive real number, the norm of $f$ and $g$ are constrained by $\|f\|_\infty, \|g\|_\infty \leq 1$, and their respective $G$-invariant H\"older norms are bounded by $r^{-\eta}$. We denote this property as $T \in WBP_G$.
\end{definition}

One of the main results of this paper is the following theorem:

\begin{thm}\label{th1.1}
Suppose that $T$ represents a singular integral operator associated with the reflection group $G$, which adheres to the size condition \eqref{1.1} and the smoothness condition \eqref{1.2} characterized by the regularity parameter $\varepsilon$. Then $T$ can be extended to a bounded operator on $C^\eta_{G}(\mathbb R^N)$, for $0<\eta<\varepsilon$, if two conditions are met:
{\rm(1)} $T$ satisfies $T(1)=0$, and {\rm(2)} $T$ has the weak boundedness property, that is, $T\in WBP_G$.
\end{thm}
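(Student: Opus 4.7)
The plan is to prove the H\"older estimate
\[
|Tf(x_1)-Tf(x_2)|\le C\,d(x_1,x_2)^{\eta}\,\|f\|_{G,\eta},\qquad f\in C^\eta_G(\R^N),\ d(x_1,x_2)>0,
\]
with $Tf$ understood via the distributional extension constructed immediately before the theorem; boundedness on $C^\eta_G(\R^N)$ then follows from the extension procedure detailed there. The overall strategy follows the David--Journ\'e template for the $T1$ theorem on H\"older classes, transplanted to the orbit distance $d$ and the $G$-invariant framework. Writing $r:=d(x_1,x_2)$, pick a $G$-invariant cutoff $\xi$ with $\xi\equiv 1$ on $\mathcal O_B(x_1,4r)$, $\supp \xi\subset \mathcal O_B(x_1,8r)$, $\|\xi\|_\infty\le 1$ and $\|\xi\|_{G,\eta}\lesssim r^{-\eta}$, and decompose
\[
f=f(x_1)+\phi_{\mathrm n}+\phi_{\mathrm f},\qquad \phi_{\mathrm n}:=(f-f(x_1))\xi,\quad \phi_{\mathrm f}:=(f-f(x_1))(1-\xi).
\]
The constant $f(x_1)$ is annihilated because $T(1)=0$ kills constants when tested against mean-zero elements of $C^\eta_{G,0,0}(\R^N)$, so it remains to estimate the contributions of $\phi_{\mathrm n}$ and $\phi_{\mathrm f}$ separately.

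The far contribution, through the distributional pairing with a regularized $\delta_{x_1}-\delta_{x_2}$, reduces to
\[
\int_{\R^N}\bigl[K(x_1,y)-K(x_2,y)\bigr]\phi_{\mathrm f}(y)\,dy.
\]
Since $d(x_1,y)\ge 4r\ge 2\,d(x_1,x_2)$ on $\supp(1-\xi)$, the smoothness condition \eqref{1.2} yields $|K(x_1,y)-K(x_2,y)|\lesssim r^{\varepsilon}/d(x_1,y)^{N+\varepsilon}$; combined with $|\phi_{\mathrm f}(y)|\lesssim \|f\|_{G,\eta}\,d(x_1,y)^{\eta}$ and a dyadic annulus decomposition around $x_1$, the geometric series converges thanks to the gap $\varepsilon-\eta>0$ and produces $\lesssim r^{\eta}\|f\|_{G,\eta}$. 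This portion is routine.

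The near contribution is the main step and is where WBP is essential. At $x_1$ the bound is immediate: since $\phi_{\mathrm n}(x_1)=0$ and $\|\phi_{\mathrm n}\|_{G,\eta}\lesssim \|f\|_{G,\eta}$, one has $|K(x_1,y)\phi_{\mathrm n}(y)|\lesssim \|f\|_{G,\eta}\,d(x_1,y)^{\eta-N}$ on the compact support $\mathcal O_B(x_1,8r)$, whose integral is $\lesssim r^{\eta}\|f\|_{G,\eta}$ by the dyadic size bound. At $x_2$ this fails because $\phi_{\mathrm n}(x_2)$ need not vanish (only $|\phi_{\mathrm n}(x_2)|\lesssim r^{\eta}\|f\|_{G,\eta}$). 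The remedy is the further splitting $\phi_{\mathrm n}=\bigl(\phi_{\mathrm n}-\phi_{\mathrm n}(x_2)\tilde\xi\bigr)+\phi_{\mathrm n}(x_2)\tilde\xi$, with $\tilde\xi$ a $G$-invariant cutoff of scale $r$ centered at $x_2$ of the same type as $\xi$: the first summand now vanishes at $x_2$ and is handled exactly as at $x_1$, while the second reduces to $\phi_{\mathrm n}(x_2)\,T\tilde\xi(x_2)$, for which WBP applied to $\tilde\xi$ against a normalized $G$-invariant bump approximating $\delta_{x_2}$, together with the tail bound \eqref{1.1}, produces $|T\tilde\xi(x_2)|\lesssim 1$. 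I expect this last step to be the principal obstacle: WBP is only an integrated, single-scale bilinear bound, so extracting a pointwise control of $T\tilde\xi(x_2)$ requires splitting $\tilde\xi$ dyadically around $x_2$ and matching scales with the approximating bump so that WBP applies uniformly, then summing the dyadic contributions without losing a logarithmic factor in $r$. Once this is accomplished, combining the near and far estimates yields the desired H\"older bound on $Tf$, and the theorem follows via the extension procedure described before its statement.
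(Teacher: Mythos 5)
Your architecture (direct estimation of the H\"older modulus of $Tf$ via $f=f(x_1)+\phi_{\mathrm n}+\phi_{\mathrm f}$) is a legitimate alternative to the paper's route, which instead proves the single estimate $|D_k(Tf)(x)|\lesssim 2^{-k\eta}\|f\|_{C^\eta_G}$ for a Littlewood--Paley family $D_k$ and then invokes the Besov characterization $C^\eta_G=\dot B^{\eta,\infty}_\infty$ of Proposition \ref{pro3.1}. Your constant term, far term, and the vanishing-at-the-point near terms are all sound: in particular the piece of $\phi_{\mathrm n}$ living in a ball of radius $\epsilon$ about the evaluation point has sup-norm $O(\epsilon^\eta)$, so a single application of WBP at scale $\epsilon$ kills it as $\epsilon\to0$.

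The gap is exactly where you say you expect it, and the remedy you sketch does not close it. Splitting $\tilde\xi$ into dyadic annuli about $x_2$ and pairing with a bump $g_\epsilon$ of scale $\epsilon$ cannot work as described: WBP in Definition \ref{def1.2} requires both test functions to be normalized bumps in balls of the \emph{same} radius, so there is no "matching of scales" between $g_\epsilon$ and an annulus at distance $2^j\epsilon$; and the off-support annuli are controlled only by the size condition \eqref{1.1}, each contributing $O(1)$, with $\sim\log(r/\epsilon)$ of them between the scales $\epsilon$ and $r$. The divergence of this sum as $\epsilon\to0$ is not a removable technical loss --- it reflects the fact that $\int K(x_2,y)\tilde\xi(y)\,dy$ has no absolutely convergent meaning. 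The missing idea is a \emph{second} use of the hypothesis $T(1)=0$: for mean-zero $\psi\in C^\eta_{G,0,0}$ supported in $\mathcal O_B(x_2,cr)$ write $\langle T\tilde\xi,\psi\rangle=-\langle T(1-\tilde\xi),\psi\rangle$, which is an absolutely convergent integral bounded by $C\|\psi\|_1$ because the smoothness condition \eqref{1.2} applied to $K(x,y)-K(x_2,y)$ yields geometric decay $2^{-j\varepsilon}$ per annulus in place of $O(1)$. This shows $T\tilde\xi=\alpha+\gamma$ on the ball, with $\|\gamma\|_\infty\lesssim1$ and $\alpha$ an undetermined constant; $\alpha$ is then pinned down by \emph{one} application of WBP at the single scale $r$, testing against a nonnegative bump $\varphi$ with $\int\varphi\sim r^N$. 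This is precisely how the paper obtains $|T\xi(x)|\lesssim1$ (estimate \eqref{new1} in the proof of Theorem \ref{th2.1}, reused as \eqref{3.2} in Section \ref{sec:3}); with that lemma supplied, the rest of your argument goes through.
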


Theorem \ref{th1.1} establishes the boundedness for operators defined on the $G$-invariant H\"{o}lder spaces. For a comparable result within the classical Euclidean context, see \cite{L}.

It is worth noting that an intriguing implication of Theorem \ref{th1.1} is the invariance of $T(f)$ under the action of the reflection group $G$ for all functions $f$ in the space $C^\eta_{G}(\mathbb R^N)$. This means that
\begin{equation}\label{invariant}
T(f)(x)=T(f)(\sigma(x))\quad \text{for all }\sigma\in G \text{ and all }x\in \Bbb R^N,
\end{equation}
which plays an essential role in the demonstration of the $T1$ theorem of this paper.

Subsequently, for a function space $X(\Bbb R^N)$, we introduce the subspace $X_G(\Bbb R^N)$, which comprises the $G$-invariant elements within $X(\Bbb R^N)$, endowed with the identical norm.
For example, ${\rm BMO}_G(\R^N)$ encompasses classical BMO functions (modulo constants) that adhere to the condition $f(x)=f(\sigma(x))$ for every $\sigma \in G$, with the norm being defined as
$$ \|f\|_{{\rm BMO_G}(\R^N)}=\|f\|_{{\rm BMO}(\R^N) }<\infty.$$

Our main result of this paper is the following $T1$ theorem pertinent to reflection-invariant operators, which augments the renowned $T1$ theorem promulgated by David and Journ\'{e} \cite{DJ}.

\begin{thm}\label{th1.2}
Suppose that $T$ is the singular integral defined in Definition \ref{def1.1}. Then $T$ is bounded on $L_G^2(\R^N)$ if and only if {\rm(1)} $T(1)\in {{\rm BMO}_G}(\R^N)$ and $T^*(1)\in {{\rm BMO}_G}(\R^N);$ {\rm(2)} $T \in WBP_G$.
\end{thm}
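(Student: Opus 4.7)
My plan is to follow the David--Journ\'e blueprint, but with every appearance of the Euclidean distance replaced by the orbit (pseudo-)metric $d$ of the reflection group and with the entire argument confined to $L_G^2(\R^N)$. Because of the invariance property \eqref{invariant} established in Theorem \ref{th1.1}, the operators produced by the scheme send $G$-invariant inputs to $G$-invariant outputs, so staying inside $L_G^2(\R^N)$ is consistent.

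\emph{Necessity} is routine. $WBP_G$ comes from Cauchy--Schwarz together with the trivial volume bound $|\mathcal{O}_B(x_0,r)|\lesssim r^N$. For $T(1)\in {\rm BMO}_G$, I fix a Euclidean ball $B=B(x_0,r)$, write $1=\xi+(1-\xi)$ with $\xi$ a $G$-invariant cutoff equal to $1$ on $\mathcal{O}_B(x_0,4r)$, use the $L_G^2$-boundedness of $T$ on $T(\xi)$, and use the smoothness \eqref{1.2} on the tail $T((1-\xi))$; the same argument applies to $T^*(1)$.

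The \emph{sufficiency} direction proceeds in four steps. \textbf{Step 1} is to build a $G$-invariant continuous Calder\'on reproducing formula: choose a Schwartz $\psi$ on $\R$ with enough cancellation and set $D_k(x,y)=2^{kN}\psi(2^k d(x,y))$. The kernels $D_k$ are automatically $G$-invariant in both variables and obey the estimates \eqref{1.1}--\eqref{1.3}. Since $r^N\lesssim |\mathcal{O}_B(x,r)|\lesssim r^N$, the triple $(\R^N,d,dx)$ behaves like a space of homogeneous type, and a standard construction (following Coifman--Weiss and Han--Sawyer) yields the identity $f=\sum_k D_k f$ on $L_G^2(\R^N)$ together with the square-function equivalence $\|f\|_{L_G^2}\asymp\bigl\|(\sum_k|D_k f|^2)^{1/2}\bigr\|_{L^2}$. \textbf{Step 2} is to define, for $b\in {\rm BMO}_G$, the paraproduct
$$\Pi_b f=\sum_k D_k\bigl((D_k b)\,S_k f\bigr),\qquad S_k=\sum_{j\le k}D_j,$$
and to verify that $\Pi_b$ is itself a $G$-invariant singular integral in the sense of Definition \ref{def1.1}, lies in $WBP_G$, and satisfies $\Pi_b(1)=b$, $\Pi_b^*(1)=0$, $\|\Pi_b\|_{L_G^2\to L_G^2}\lesssim\|b\|_{{\rm BMO}_G}$; the $L_G^2$-bound rests on a Carleson measure characterization of ${\rm BMO}_G$ in terms of $\sum_k|D_k b(x)|^2\,dx\,dk$. \textbf{Step 3} replaces $T$ by $\widetilde T=T-\Pi_{T(1)}-\Pi_{T^*(1)}^*$, reducing to the case $\widetilde T(1)=\widetilde T^*(1)=0$ while preserving the kernel conditions and $WBP_G$.

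\textbf{Step 4} is the Cotlar--Stein almost-orthogonality estimate
$$\|D_j\widetilde T D_k\|_{L_G^2\to L_G^2}\le C\,2^{-|j-k|\varepsilon'}$$
for some $\varepsilon'\in(0,\varepsilon)$, obtained by a Schur test on the composed kernel, using the cancellation of $D_j,D_k$, the smoothness \eqref{1.2}--\eqref{1.3} of $K$, the vanishing $\widetilde T(1)=\widetilde T^*(1)=0$ to absorb the non-decaying pieces, and $WBP_G$ for the near-diagonal block $|j-k|\le C$. Combining this estimate with the Calder\'on formula from Step 1 gives $\|\widetilde T\|_{L_G^2\to L_G^2}<\infty$, and Steps 2--3 then transfer boundedness back to $T$. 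The \emph{main obstacle} lies in Steps 1--2: one must verify that the orbit pseudo-metric $d$ (which vanishes on $G$-orbits rather than only on the diagonal) supports a genuine Littlewood--Paley decomposition of $L_G^2$ with a $G$-invariant Carleson measure characterization of ${\rm BMO}_G$, despite the fact that Lebesgue measure is not intrinsically adapted to $d$. Once this infrastructure is secured, the classical David--Journ\'e mechanism transfers cleanly.
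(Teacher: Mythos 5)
Your blueprint is the same as the paper's (David--Journ\'e: reproducing formula, paraproduct reduction to $T1=T^*1=0$, almost orthogonality plus Cotlar--Stein, with Theorem \ref{th1.1} supplying the $G$-invariance of the outputs), but the place you yourself flag as ``the main obstacle'' in Steps 1--2 is exactly where your concrete constructions break down, and the fix is nontrivial. First, the kernels $D_k(x,y)=2^{kN}\psi(2^kd(x,y))$ cannot telescope to the identity, nor can they be given exact cancellation by choosing $\psi$ with vanishing radial moments: since $\mathcal{O}_B(x,r)=\bigcup_{\sigma\in G}B(\sigma(x),r)$, the quantity $\int_{\R^N}\psi(2^kd(x,y))\,dy$ genuinely depends on $x$ (it varies between one and $|G|$ copies of the Euclidean value according to the distance from $x$ to the reflection hyperplanes), so neither $\int D_k(x,y)\,dy=0$ nor $\sum_k D_k=I$ holds. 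The paper circumvents this with Coifman's normalization $S_k=M_kT_kW_kT_kM_k$, which forces $S_k(1)=S_k^*(1)=1$ exactly, and only then sets $D_k=S_k-S_{k-1}$.

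Second, even with such $D_k$, your paraproduct $\Pi_bf=\sum_kD_k\bigl((D_kb)S_kf\bigr)$ does not satisfy $\Pi_b(1)=b$: formally $\Pi_b(1)=\sum_kD_k^2b$, and in this non-convolution setting one cannot arrange $\sum_kD_k^2=I$ by a Fourier-side normalization. This is why the paper must first prove the Calder\'on reproducing formula $f=\sum_k\widetilde D_kD_kf$ with $\widetilde D_k=T_M^{-1}D_k^M$, which in turn requires inverting $T_M=I-R_M$ and showing $\|R_M\|\lesssim 2^{-M\delta}$ on the smooth molecule spaces --- the content of Theorem \ref{th2.1} and Proposition \ref{proposition 3.2.3}, an entire layer of machinery absent from your outline. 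The paraproduct is then built from $\widetilde D_k$, not $D_k$. Your Step 4 (the analogue of Lemma \ref{le4.2} plus Cotlar--Stein) and your use of Theorem \ref{th1.1} to keep everything $G$-invariant match the paper; but as written, Steps 1--2 assert identities that are false for the orbit pseudo-metric, so the proof has a genuine gap precisely at the point where the reflection-group geometry departs from the Euclidean one.
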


It is widely acknowledged that if $T$ is a classical Calder\'on--Zygmund singular integral operator and is bounded on $L^2(\R^N)$, then $T$ is also bounded on $L^p(\R^N)$ for $1<p<\infty$, is of the weak type (1,1), and maps from  $L^\infty(\R^N)$ to ${\rm BMO}(\R^N).$ These assertions persist for the singular integral operators as defined in Definition \ref{def1.1}.
\begin{thm}\label{th1.3}
Suppose that $T$ is the singular integral defined operator in Definition \ref{def1.1} and is bounded on $L_G^2(\R^N)$. Then $T$ is bounded on $L_G^p(\R^N)$ for $1<p<\infty$, is of the weak type $(1,1)$ for functions $f$ in $L_G^1(\R^N)$, and maps from $L_G^\infty(\R^N)$ to ${{\rm BMO}_G}(\R^N).$
\end{thm}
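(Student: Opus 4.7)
The plan is to transport the classical Calder\'on--Zygmund program into the quasi-metric measure space $(\R^N,d,dx)$, exploiting that the operator, its kernel, and all test data are $G$-invariant. First, since $\mathcal{O}_B(x,r)=\bigcup_{\sigma\in G}B(\sigma(x),r)$ is a union of at most $|G|$ Euclidean balls, one has $|B(x,r)|\le|\mathcal{O}_B(x,r)|\le|G|\cdot|B(x,r)|$, so Lebesgue measure is doubling with respect to $d$. Together with the (genuine) triangle inequality for $d$ and the estimates \eqref{1.1}--\eqref{1.3}, $(\R^N,d,dx)$ with kernel $K$ fits the Calder\'on--Zygmund framework on a space of homogeneous type.

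For the weak-type $(1,1)$ inequality, fix $f\in L^1_G\cap L^2_G$ and $\lambda>0$ and carry out a Calder\'on--Zygmund / Whitney decomposition of $f$ in $(\R^N,d,dx)$: $f=g+\sum_jb_j$ with $\|g\|_\infty\lesssim\lambda$, each $b_j$ supported in an orbit ball $B_j=\mathcal{O}_B(x_j,r_j)$ satisfying $\int b_j=0$ and $\|b_j\|_1\lesssim\lambda|B_j|$, and $\sum_j|B_j|\lesssim\lambda^{-1}\|f\|_1$. The standard splitting controls $|\{|Tg|>\lambda/2\}|$ by the $L^2$ estimate applied to $g$, the measure of the dilated exceptional set $\bigcup_j 2B_j$ by $\lambda^{-1}\|f\|_1$, and on its complement one invokes the cancellation of each $b_j$ together with the $y$-smoothness \eqref{1.3} to obtain $\int_{(2B_j)^c}|Tb_j(x)|\,dx\lesssim\|b_j\|_1$, yielding the desired bound. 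Density of $L^1_G\cap L^2_G$ in $L^1_G$ completes this step.

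Strong $L^p_G$-boundedness for $1<p<2$ then follows by Marcinkiewicz interpolation between weak $(1,1)$ and $L^2$. For $2<p<\infty$, the formal adjoint $T^*$ has kernel $K(y,x)$ satisfying \eqref{1.1}--\eqref{1.3} (with the roles of \eqref{1.2} and \eqref{1.3} exchanged), is $L^2_G$-bounded by hypothesis, and hence is weak $(1,1)$ by the same argument; duality then yields the $L^p_G$-bound for $T$. For $L^\infty_G\to{\rm BMO}_G$, using that on $G$-invariant functions the classical ${\rm BMO}$ norm is comparable to its analog defined over orbit balls (up to constants depending only on $|G|$), fix an orbit ball $B^*=\mathcal{O}_B(x_0,r)$ and split $f=f\chi_{2B^*}+f\chi_{(2B^*)^c}=f_1+f_2$: the contribution of $f_1$ to the oscillation over $B^*$ is handled by Cauchy--Schwarz and $L^2$-boundedness, while for $f_2$ one subtracts the constant $Tf_2(x_0)$ and uses \eqref{1.2} together with a dyadic annular decomposition in $d$ to obtain $\sup_{x\in B^*}|Tf_2(x)-Tf_2(x_0)|\lesssim\|f\|_\infty$.

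The principal technical issue is the Calder\'on--Zygmund decomposition in $(\R^N,d,dx)$: while the space is indeed of homogeneous type, one must set it up so that the decomposition uses orbit balls, and that the smoothness hypotheses \eqref{1.2}--\eqref{1.3}, phrased in terms of $d$, are applied correctly at every step (in particular the inequality $d(y,x_j)\le r_j\le\tfrac12 d(x,y)$ for $x\notin 2B_j$, $y\in B_j$, which is where the full triangle inequality for $d$ is essential). Once this geometric framework is in place the remainder is a direct transcription of the classical Calder\'on--Zygmund chain of implications.
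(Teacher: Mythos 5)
Your proposal is correct in outline and reaches the same conclusions, but the key step---the Calder\'on--Zygmund decomposition---is organized differently from the paper. You transport the whole problem to the pseudo-metric measure space $(\R^N,d,dx)$ (equivalently, the quotient $\R^N/G$), observe doubling via $\mathcal{O}_B(x,r)=\bigcup_{\sigma\in G}B(\sigma(x),r)$, and invoke the Coifman--Weiss decomposition with orbit balls; this is legitimate because $d$ satisfies the genuine triangle inequality, but it requires importing the Whitney/covering machinery for quasi-metric spaces and keeping in mind that $d$ is only a pseudo-metric (whole orbits sit at distance zero), so the ``balls'' and the good/bad functions really live on the quotient. The paper instead stays entirely Euclidean: it notes that $E_\lambda=\{M(f)>\lambda\}$ is $G$-invariant, performs the classical Whitney decomposition into Euclidean cubes $Q_j$, forms the classical good/bad parts relative to each reflected family $\{\sigma(Q_j)\}$, and then \emph{averages over $G$} to produce a $G$-invariant good function $g$ and mean-zero pieces $b_j$; the exceptional set is the orbit $\mathcal{O}(\bigcup_j 4\sqrt{N}Q_j)$. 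The averaging trick buys you the decomposition with no covering theory beyond the Euclidean one, at the cost of a finite sum over $|G|$; your route is more conceptual but should state explicitly that the decomposition is performed on the quotient space. One small constant to fix in your version: for $x\notin 2B_j$ and $y\in B_j$ you only get $d(y,x_j)\le d(x,y)$, not $\le\tfrac12 d(x,y)$, so you should dilate to $3B_j$ (or larger) before applying \eqref{1.3}, exactly as the paper dilates to $4\sqrt{N}Q_j$. The interpolation/duality step and the $L^\infty_G\to{\rm BMO}_G$ argument are essentially the same as the paper's, except that the paper is more careful about first \emph{defining} $T(f)$ for $f\in L^\infty_G$ modulo constants (via truncations $f_j$ and explicit constants $c_j$, following Meyer--Coifman); your subtraction of $Tf_2(x_0)$ is the same idea in compressed form and should be accompanied by a short verification that the resulting definition is independent of the radius used in the splitting.
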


Theorems \ref{th1.2} and \ref{th1.3} present a comprehensive $L^p$ framework for singular integral operators associated with reflection groups, thereby expanding the scope of the classic Calder\'{o}n-Zygmund theory to encompass scenarios characterized by reflection invariance.

We conclude this section with an examination of the proofs for Theorems \ref{th1.2} and \ref{th1.3}.

The necessity component of Theorem \ref{th1.2} aligns with the classical case in its simplicity. To demonstrate sufficiency, we must establish both the $L^2$ boundedness of $T$ and the $G$-invariance of $Tf$ for functions $f$ within $L^2_G$. To verify the $L^2$ boundedness of $T$, we introduce smooth molecule spaces that are invariant under $G$ and construct a $G$-invariant variant of approximation to the identity. We then demonstrate that singular integral operators which are invariant under $G$ are bounded on these smooth molecule spaces. This allows us to deduce Calder\'{o}n's reproducing formula, which converges in $L^2$ and smooth molecule spaces.
We establish almost orthogonality estimates and using Cotlar-Stein lemma derive the $L^2_G$ boundedness.

Contrary to the traditional setting, to conclude the proof of the $T1$ theorem, we must also confirm the $G$-invariance of $Tf$ for $f\in L^2_G$. This does not directly stem from the $G$-invariance of $K$, and proving this fact is indeed rather non-trivial. Here, we adopt an approach based on Theorem \ref{th1.1} for $G$-invariant H\"{o}lder spaces.

Finally, to substantiate Theorem \ref{th1.3}, we first devise a $G$-invariant version of the Calder\'{o}n-Zygmund decomposition, which is of independent interest. We then harness this to corroborate the weak-type $(1,1)$ nature of $T$.
Regarding the endpoint $p=\infty$, we initially extend the domain of $T$ to include $L^\infty_G$ and subsequently furnish a rigorous proof for the $L^\infty_G\to \text{BMO}_G$ estimations. The $L_G^p$ boundedness of $T$ for $1<p<\infty$ is thereafter established through interpolation techniques.

The structure of this article is as follows. In Section ~\ref{sec:2}, we establish the boundedness of reflection-invariant singular integrals on spaces of smooth molecules and derive a Calder\'{o}n's reproducing formula. The demonstration of Theorem ~\ref{th1.1} is detailed in Section ~\ref{sec:3}. Section ~\ref{sec:4} is dedicated to proving Theorems ~\ref{th1.2} and ~\ref{th1.3}. Lastly, proofs for several essential technical lemmas are provided in the appendix.

\section{{Boundedness of reflection invariant singular integral on smooth molecule spaces and Calder\'{o}n's reproducing formula}}\label{sec:2}

It is widely acknowledged that in the traditional context, almost orthogonal estimates serve as crucial instruments for substantiating the $T1$ theorem.
The forthcoming proposition furnishes an analogous instrument within our specific framework.

\begin{thm}\label{th2.1}
    Suppose that $T$ is a $G$-invariant singular integral defined in Definition \ref{def1.1}. Suppose further that $T$ is bounded on $L_G^2(\R^N), T(1)=T^*(1)=0,$ and $K(x,y)$, the kernel of $T$, satisfies the following
    \begin{equation}\label{e:double}
    |[K(x,y-K(x',y))]-[K(x,y')-K(x',y')]|\le C\left(\frac{d(x,x')d(y,y')}{d(x,y)^2}\right)^{\varepsilon} \frac{1}{d(x,y)^N}
    \end{equation}
    for $d(x,x')\le \frac{1}{2}d(x,y)$ and $d(y,y')\le \frac{1}{2}d(x,y).$
    Then $T$ maps $\mathbb M(\beta, \gamma, r, x_0)$ boundedly to itself
    with $0<\beta, \gamma<\varepsilon\le 1,$ where $\varepsilon$ is the exponent of the regularity of the kernel of $T.$ Moreover, there exists a constant $C$ such that
    $$\|Tf\|_{\M(\beta, \gamma, r, x_0)}\le C \|f\|_{\M(\beta, \gamma, r, x_0)}.$$
\end{thm}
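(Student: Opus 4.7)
The plan is to verify the three defining properties of $\mathbb{M}(\beta,\gamma,r,x_0)$ for $Tf$: the mean-zero (cancellation) property, the size decay of order $\gamma$, and the $\beta$-Hölder smoothness in the metric $d$, with every estimate bounded by a constant multiple of $\|f\|_{\mathbb{M}(\beta,\gamma,r,x_0)}$. The mean-zero property is immediate from $T^*(1)=0$: pairing with the constant function gives $\int Tf\,dx = \langle Tf, 1\rangle = \langle f, T^*(1)\rangle = 0$.

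For the size estimate I would dichotomize based on $d(x,x_0)$ versus $r$. In the far regime $d(x,x_0) \geq 2r$, the molecule cancellation $\int f\,dy = 0$ allows one to rewrite
\begin{equation*}
Tf(x) = \int_{\mathbb{R}^N} \bigl[K(x,y) - K(x,x_0)\bigr] f(y)\,dy,
\end{equation*}
and one partitions the $y$-integral by $\mathcal O_B$-annuli around $x_0$: on annuli with $2^j r \leq \tfrac12 d(x,x_0)$ the second-variable smoothness \eqref{1.3} combines with the bound $|f(y)| \lesssim r^{-N}$, while on annuli beyond $\tfrac12 d(x,x_0)$ the size bound \eqref{1.1} combines with the $\gamma$-decay of $f$. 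Summing geometric series produces the desired factor $r^\gamma/d(x,x_0)^{N+\gamma}$. In the near regime $d(x,x_0) < 2r$ one exploits $T(1) = 0$ by writing $Tf(x) = T(f - f(x)\eta)(x) + f(x)\,T(\eta)(x) = T(f - f(x)\eta)(x)$ for a $G$-invariant bump $\eta$ localized near $x$ at scale $r$; the Hölder bound on $f$ shrinks $f - f(x)\eta$ near $x$ so that the kernel singularity is integrable, and $L^2_G$-boundedness together with $WBP_G$ absorb the residual near-diagonal block.

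The smoothness estimate for $|Tf(x) - Tf(x')|$, under $d(x,x') \leq \tfrac12(r + d(x,x_0))$, proceeds in the same two regimes but relies decisively on \eqref{e:double}. In the far regime one writes
\begin{equation*}
Tf(x) - Tf(x') = \int_{\mathbb{R}^N} \bigl\{[K(x,y) - K(x',y)] - [K(x,x_0) - K(x',x_0)]\bigr\}\, f(y)\,dy
\end{equation*}
and partitions in $\mathcal O_B$-annuli around $x_0$: on inner annuli the double-difference estimate \eqref{e:double} yields the extra factor $(d(x,x')/d(x,x_0))^\varepsilon$, whereas outer annuli are controlled via \eqref{1.2} and the decay of $f$. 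Interpolating the $\varepsilon$-gain against the molecule's $\gamma$-decay converts the exponent $\varepsilon$ into the target exponents $\beta$ and $\gamma$, which is precisely where the hypothesis $\beta,\gamma<\varepsilon$ enters. In the near regime, inserting $T(1) = 0$ once again replaces $f$ by $f - f(x)\eta$ and shifts the distance factor $d(x,x')^\beta$ onto the Hölder bound of $f$.

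The main obstacle is the near-field analysis. Because the $G$-invariant metric $d(x,y) = \min_{\sigma \in G}|x - \sigma y|$ degenerates on the orbit $\mathcal O(\Delta)$, ordinary annular decompositions must be reinterpreted in terms of $\mathcal{O}_B$-balls, and one must verify that $WBP_G$ together with $L^2_G$-boundedness genuinely controls $Tf$ on the union of reflected near-diagonal blocks rather than merely on a single such block. Moreover, justifying the manipulation $f(x)\,T(\eta)(x)$ in the identity $T(1)=0$ requires care, since $T(1)$ is only a distribution on $C^\eta_{G,0,0}(\mathbb{R}^N)$; one must work through the localization procedure outlined in the passage preceding Definition \ref{def1.2} to render the pairing meaningful. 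Once these points are cleared, the remaining computation is a parallel of the classical smooth-molecule argument adapted to the metric $d$.
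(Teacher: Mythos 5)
Your overall architecture (near/far dichotomy in $d(x,x_0)$ versus $r$, exploiting $\int f=0$ to subtract $K(x,x_0)$, and invoking the double-difference bound \eqref{e:double} for the regularity of $Tf$) matches the paper's, and your treatment of the cancellation of $Tf$ via $T^*(1)=0$ and of the near regime via $T(1)=0$ is in the right spirit. However, there is a genuine gap in your far-regime analysis. After writing $Tf(x)=\int[K(x,y)-K(x,x_0)]f(y)\,dy$ and splitting into $\mathcal O_B$-annuli around $x_0$, you propose to control the outer annuli (those beyond $\tfrac12 d(x,x_0)$) by the size bound \eqref{1.1} together with the decay of $f$. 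But these outer annuli contain the orbit of $x$ itself, where $K(x,\cdot)$ has a non-integrable singularity ($\int_{d(x,y)\le\delta}d(x,y)^{-N}dy=\infty$) while $f$ is merely small there (of order $r^\gamma/d(x,x_0)^{N+\gamma}$), not zero. So the integral you claim to bound by \eqref{1.1} diverges. The same defect recurs in your smoothness estimate: on the region $d(x,y)\lesssim d(x,x')$ neither \eqref{1.2} nor \eqref{e:double} is applicable and the size bound alone is not integrable.

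This is precisely why the paper uses a \emph{three}-fold partition $1=I(y)+J(y)+L(y)$ in the far regime, isolating a piece $f_1=If$ supported near $x$, a piece $f_2=Jf$ near $x_0$, and a remainder $f_3$. The near-$x$ piece is handled not by kernel size but by subtracting $f_1(x)$, using the H\"older smoothness of $f$ to tame the singularity, and proving the auxiliary bound $|T(u)(x)|\lesssim 1$ for a normalized bump $u$ — an argument that combines $T(1)=0$ with the $L^2_G$-boundedness hypothesis and is itself not immediate (it is your ``residual near-diagonal block,'' but it must be carried out in the far regime as well, not only when $d(x,x_0)<2r$). Once you add this third region and the accompanying $|T(u)|\lesssim 1$ lemma to both the size and the smoothness arguments, your proof closes and coincides essentially with the paper's.
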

Here $\mathbb M(\beta, \gamma, r, x_0)$ is the smooth molecule space defined by following

\begin{definition}\label{sm}
    A function $f(x)$ is said to be a smooth molecule for $0<\beta\le 1, \gamma>0, r>0$ and some fixed $x_0\in \R^N,$ if $f(x)$ satisfies the following conditions:
    \begin{equation}\label{sm1.17}
    |f(x)|\le C \frac{r^\gamma}{(r+d(x,x_0))^{N+\gamma}};
    \end{equation}
    \begin{equation}\label{sm1.18}
    \begin{aligned}
    |f(x)-f(x')|&\le C  \Big(\frac{d(x,x')}{r+d(x,x_0)}\Big)^\beta
    \frac{r^\gamma}{(r+d(x,x_0))^{N+\gamma}}
    \end{aligned}
    \end{equation}
    for $d(x,x')\le \frac{1}{2}(r+d(x,x_0))$ and
    \begin{equation}\label{sm1.19}
    \int\limits_{\R^N} f(x) dx=0.
    \end{equation}
    If $f(x)$ is a smooth molecule, we write $f\in \mathbb M(\beta, \gamma, r, x_0)$ and define the norm of $f$ by
    $\|f\|_{\mathbb M(\beta, \gamma, r, x_0)}:=\inf\{C: (\ref{sm1.17})-(\ref{sm1.18})\ {\rm hold}\}.$
\end{definition}
It can be easily verified that $\mathbb M(\beta, \gamma, r, x_0)$  is a Banach space. We remark that all smooth molecules $f$ are $G$-invariant, that is, $f(x)=f(\sigma (x))$ for all $\sigma\in G.$ 
\begin{proof}[{\bf Proof of Theorem \ref{th2.1}}]
Let us consider a function $f\in \mathbb M(\beta, \gamma, r, x_0)$ with  $\|f\|_{{\mathbb M}(\beta, \gamma, r,
        x_0)}=1$, where $\beta$ and $\gamma$ are positive parameters with values less than $\varepsilon$, and $\varepsilon$ denotes the order of regularity for the kernel associated with the operator $T$. Our aim is to demonstrate that the norm $\|T(f)\|_{\mathbb M(\beta, \gamma, r, x_0)}$ is bounded above by a constant $C$.

Initially, we assess the size requirement for $Tf(x)$. To achieve this, we consider two cases: Case (1): $d(x,x_0)\le 5r$ and
    Case (2): $d(x,x_0)=R> 5r.$

For Case (1), we define $1=\xi(y)+\eta(y)$, where $\xi(y)$ is represented as $\theta\Big(\frac{d(y,x_0)}{10r}\Big)$ with $\theta$ belonging to $C_0^\infty(\mathbb{R})$. The function $\theta$ is characterized by $\theta(x)=1$ for $|x|\le 1$ and $\theta(x)=0$ for $|x|\ge 2$. Since $T$ is bounded on $L_G^2(\R^N),$ we can write
\begin{align*}
    Tf(x)
    &= \langle K(x,\cdot),(\xi+\eta)f\rangle
    \\&=  \int\limits_{\R^N} K(x,y)\xi(y)(f(y)-f(x))dy+f(x)\langle
    K(x,\cdot),\xi(\cdot)\rangle
    \\&\qquad +\int\limits_{\R^N} K(x,y)\eta(y)f(y)dy
    =:  I_1+I_2+I_3.
    \end{align*}

    Utilizing the size condition for the kernel $K(x,y)$ as specified in Definition \ref{def1.1}, in conjunction with the smoothness requirement \eqref{sm1.18} for $f$, we obtain
    \begin{align*}|I_1|
    &\lesssim  \int\limits_{d(x,y)\le 20r} |K(x,y)|\cdot|f(y)-f(x)|dy
    \\&\lesssim  \int\limits_{d(x,y)\le 20r} \frac{1}{d(x,y)^N}\Big(\frac{d(x,y)}{r}\Big)^\beta
    \Big\{\frac{r^\gamma}{(r+d(y,x_0))^{N+\gamma}}
    + \frac{r^\gamma}{(r+d(x,x_0))^{N+\gamma}}\Big\} dy.
    \end{align*}

Note that the conditions $d(y,x)\le 20r$ and $d(x,x_0)\le 5r$ implies $r+d(y,x_0)\sim r+d(x,x_0)$. Consequently,
    \begin{align*}|I_1|
    &\lesssim  \frac{1}{r^\beta}\frac{r^\gamma}{(r+d(x,x_0)^N}\int\limits_{d(x,y)\le 20r}\frac{1}{d(x,y))^N}d(x,y)^\beta dy
    \lesssim \frac{r^\gamma}{(r+d(x,x_0))^{N+\gamma}}.
    \end{align*}

To evaluate $I_2$, it is sufficient to demonstrate that $|T\xi(x)|\lesssim 1$ for $d(x,x_0)\le 5r$.
To this end, consider a function $\psi$ belonging to $C^\eta_G(\mathbb R^N)$ with its support restricted to $\mathcal{O}_B(x_0,5r)$ and satisfying the condition that $\int\limits_{\R^N}\psi(x)dx=0$. By the facts that $T(1)=0$ and $\int\limits_{\R^N}\psi(x)dx=0$, we have
    \begin{align*}
    |\langle T\xi, \psi\rangle| &=|- \langle T\eta,\psi\rangle|
    =\Big|\int\limits_{\R^N}\int\limits_{\R^N} [K(x,y)-K(x_0,y)]\eta(y)\psi(x)dydx\Big|.
    \end{align*}
    Observe that the supports of $\eta(y)$ and $\psi(x)$ imply $d(y,x_0)> 10r$ and $d(x, x_0)\le 5r,$ respectively. The smoothness condition of $K$ yields
    \begin{align*}
    |\langle T\xi, \psi\rangle| \lesssim \iint\limits_{d(y,x_0)> 2d(x, x_0)}\frac{1}{d(y,x_0)^N} \Big(\frac{d(x, x_0)}{d(y, x_0)}\Big)^\varepsilon dy|\psi(x)|dx
    \lesssim  \int\limits_{\R^N} |\psi(x)|dx.
    \end{align*}
    This implies that $T\xi(x)=\alpha+\gamma(x)$ for $x\in \mathcal{O}_B(x_0,5r)$, where $\alpha$ is a constant depending on $\xi$ and $\|\gamma\|_\infty\le C_0$ for some constant $C_0$ independent of $\xi.$
    To estimate $\alpha$, choose $\varphi
    \in C_{G,0}^\eta(\mathbb{R}^N)$ with supp $\varphi\subseteq \mathcal{O}_B(x_0,5r),  0\le \varphi\le 1, \|\varphi\|_\eta\le r^{-\eta}$
    and $\int_{\R^N} \varphi(x)dx=C_1 r^N,$ for some constant $C_1$ independent of $r.$ We then use our assumption that  $T$ is bounded on $L_G^2(\R^N)$ to get
    $$\bigg|C_1 r^N\alpha + \int\limits_{\R^N} \varphi(x)\gamma(x)dx\bigg|=|\langle T\xi,\varphi\rangle |\le C r^N,$$
    which implies $|\alpha|\le C_0+\frac{C}{C_1}$ and hence,
    \begin{equation}\label{new1}
    |T\xi(x)|\lesssim 1
    \end{equation}
    for $x\in \mathcal{O}_B(x_0,5r).$ Thus,
    $$|I_2|\lesssim |f(x)|\lesssim \frac{r^\gamma}{(r+d(x,x_0))^{N+\gamma}}.$$

    For the last term $I_3$, observe that $d(x,x_0)\le 5r$ and the support of $\eta(y)$ is contained in $\{y\mid d(y,x_0)\ge 10r\},$ so $d(x,y)\ge 5r$ and $d(x,y)\sim d(y,x_0),$ and thus,
    \begin{align*}|I_3|
    &\lesssim  \int\limits_{d(y,x_0)\ge 10 r\atop d(x,y)\ge 5r} \frac{1}{d(x,y)^N}\frac{r^\gamma}{(r+d(y,x_0))^{N+\gamma}} dy
    \\&\lesssim  \frac{1}{r^N}        \lesssim \frac{r^\gamma}{(r+d(x,x_0))^{N+\gamma}}.
    \end{align*}
This concludes all estimates for Case (1).

  We now treat Case (2) where $d(x,x_0)=R > 5r$.
Set $1=I(y)+J(y)+L(y)$, where $I(y)=\theta\big(\frac{16d(y,x)}{R}\big), J(y)=\theta\big(\frac{16d(y,x_0)}{R}\big)$ and $f_1(y)=I(y)f(y),f_2(y)=J(y)f(y)$ and
    $f_3(y)=L(y)f(y).$

    Observe that if $y$ is in the support of $f_1,$ then $d(y,x_0)\sim d(x,x_0)=R,$ and thus,
    \begin{align*}
    \textup{(i)}&\ |f_1(y)|  \lesssim \frac{r^\gamma}{R^{N+\gamma}};
    \\
    \textup{(ii)}&\ \int\limits_{\R^N} |f_1(y)|dy  \lesssim  \int\limits_{d(y,x_0)\ge \frac{7R}{8}}\frac{r^\gamma}{(r+d(y,x_0))^{N+\gamma}} dy
    \lesssim \Big(\frac{r}{R}\Big)^\gamma;\\
    \textup{(iii)}&\ |f_1(y)-f_1(x)|\lesssim  \Big(\frac{d(y,x)}{R}\Big)^\beta \frac{r^\gamma}{(r+d(y,x_0))^{N+\gamma}};
    \\
    \textup{(iv)}&\ \int\limits_{\R^N} |f_3(y)| dy  \lesssim   \int\limits_{d(y,x_0)\ge \frac{R}{16}} \frac{r^\gamma}{(r+d(y,x_0))^{N+\gamma}}dy
    \lesssim \Big(\frac{r}{R}\Big)^\gamma.
    \end{align*}

Given the fact that $\int\limits_{\R^N} f(y)dy=0$, it follows that
    \begin{align*}
    \textup{(v)}\ \Big|\int\limits_{\R^N} f_2(y)dy\Big|  =\Big|-\int\limits_{\R^N} f_1(y)dy -\int_{\R^N} f_3(y)dy \Big|
    \lesssim \Big(\frac{r}{R}\Big)^\gamma.\ \ \ \ \ \ \ \ \ \ \ \ \ \ \ \ \ \
    \end{align*}

We begin by estimating $Tf_1(x)$ as follows.
Let $u(y)=\theta\Big(\frac{2d(y,x)}{R}\Big)$. Then $f_1(y)=u(y)f_1(y).$
Our subsequent step is to consider
    \begin{align*}Tf_1(x)
    &=  \langle K(x,\cdot), u (\cdot) f_1(\cdot)\rangle
    =  \int\limits_{\R^N} K(x,y)u(y)[f_1(y)-f_1(x)]dy+f_1(x)\langle
    K(x,\cdot),u(\cdot)\rangle
    \\&=:  I\!I_{1.1}+I\!I_{1.2}.
    \end{align*}

Utilizing a similar method as employed for  $I_2,$ we can derive the inequality $|T(u)(x)|\lesssim 1.$ This implies that
$$ |I\!I_{1.2}|\lesssim |f(x)|\lesssim \frac{r^\gamma}{(r+d(x,x_0))^{N+\gamma}}.$$

For the term $I\!I_{1.1}$, applying the size condition on the kernel $K(x,y)$ and  property \textup{(iii)} above  leads to the following estimate:
\begin{align*}
|I\!I_{1.1}|
\lesssim  \int\limits_{d(x,y)\le R} \frac{1}{d(x,y)^N} \Big(\frac{d(x,y)}{R}\Big)^\beta\Big(\frac{r^\gamma}{R^{N+\gamma}}\Big)^\gamma  dy
    \lesssim \frac{r^\gamma}{(r+d(x,x_0))^{N+\gamma}}.
    \end{align*}
The derived inequality demonstrates the desired bound for the term $I\!I_{1.1}$.

To treat $Tf_2(x)$, we partition it into two components:
\begin{align*}Tf_2(x)
        =  \int\limits_{\R^N} [K(x,y)-K(x,x_0)]f_2(y)dydy+K(x,x_0)\int\limits_{\R^N} f_2(y)dy
        =:  I\!I_{2.1}+I\!I_{2.2}.
    \end{align*}

    Using the estimate provided in (v), we obtain
    \begin{align*}
    |I\!I_{2.2}|
        \lesssim  |K(x,x_0)|\Big(\frac{r}{R}\Big)^{\gamma}
        \lesssim   \frac{1}{d(x,x_0)^N}\Big(\frac{r}{R}\Big)^{\gamma}
        \lesssim \frac{r^\gamma}{(r+d(x,x_0))^{N+\gamma}}.
    \end{align*}

To estimate the term $I\!I_{2.1}$, we invoke the smoothness condition of $K(x,y)$ as stipulated in Definition \ref{def1.1} with $d(y,x_0)\le \frac{1}{2}d(x,x_0)$, we derive the following:
\begin{align*}|I\!I_{2.1}|
    &\lesssim  \int\limits_{d(y,x_0)\le \frac{R}{8}} \frac{1}{d(x,x_0)^N}\Big(\frac{d(y,x_0)}{d(x,x_0)}\Big)^{\varepsilon}  \frac{r^\gamma}{(r+d(y,x_0))^{N+\gamma}}dy
    \\&\lesssim  \frac{r^\gamma}{R^{N+\gamma}}\lesssim  \frac{r^\gamma}{(r+d(x,x_0))^{N+\gamma}}.
    \end{align*}

    Finally,
    \begin{align*}|Tf_3(y)|
    &\lesssim  \int\limits_{ d(y,x)\ge \frac{R}{16}, \atop d(y,x_0)\ge \frac{R}{16}}\frac{1}{d(x,y)^N}\frac{r^\gamma}{(r+d(y,x_0))^{N+\gamma}}dy
    \\&\lesssim   \frac{1}{d(x,x_0))^N} \int\limits_{  d(y,x_0)\ge \frac{R}{16}}\frac{r^\gamma}{(r+d(y,x_0))^{N+\gamma}}dy
    \lesssim  \frac{r^\gamma}{(r+d(x,x_0))^{N+\gamma}}.
    \end{align*}

    It remains to show the regularity of $T(f),$ that is the following estimate:
    \begin{align*}
    |Tf(x)-Tf(x')|\lesssim
    \Big(\frac{d(x,x')}{r+d(x,x_0)}\Big)^\beta \frac{r^\gamma}{(r+d(x,x_0))^{N+\gamma}}
    \end{align*}
    for $d(x,x')\le \frac{1}{2}{(r+d(x,x_0)).}$

To proceed with the regularity proof for $T(f)$, we set $d (x,x_0)=
    R$ and $d (x, x^\prime )=\delta.$ We first consider the case where $R \geq 10r$ and $\delta \leq \frac{1}{20}(r + R)$. As before, let us decompose $1 = I(y) + J(y) + L(y),$ with $I(y) =\theta
    ({{16d (y,x)}\over{R}}), J(y) =\theta ({{16d
            (y,x_0)}\over{R}}),$ and define $f_1(y)= f(y)I(y), f_2(y)= f(y)J(y),$ and
    $f_3(y)= f(y)L(y).$ We can express
\begin{align*}
        T(f_1)(x)&=\int\limits_{\R^N} K(x,y)u(y)[f_1(y) - f_1(x)] dy\\
        & \qquad + \int\limits_{\R^N} K(x,y)v(y)f_1(y) dy+f_1(x)\int\limits_{\R^N} K(x,y)u(y)dy,
    \end{align*}
where  $u(y) =\theta ({{d (x,y)}\over{2\delta }})$ and $v(y) = 1
    - u(y).$ Let us denote the first term on the right-hand side by $p(x)$ and the sum of the last two terms by $q(x)$. The size condition of $K$ and the smoothness of $f_1$ imply
\begin{align*}
        \vert  p(x)\vert &\lesssim \int\limits_{d (x,y)\leq 4\delta
        }\frac{1}{d (x,y)^{N}}{{d (x,y)^\beta }\over{R^\beta }}{{r^\gamma
            }\over{R^{N+\gamma }}} dy \lesssim {{\delta ^\beta }\over{R^\beta }}{{r^\gamma }\over{R^{N+ \gamma
        }}}.
    \end{align*}
This estimate remains valid when $x$ is replaced by $x'$ as $d(x, x') = \delta$. Consequently, we obtain
$$\vert  p(x) - p(x^\prime )\vert \le |p(x)|+|p(x')|\lesssim {{\delta ^\beta }\over{R^
            \beta }}{{r^\gamma }\over{R^{1+\gamma }}}.$$

    To derive the regularity for $q(x)$, we use the condition $T(1) = 0$ to write:
\begin{align*}
        q(x) - q(x^\prime )&=\int\limits_{\R^N} [K(x,y) - K(x^\prime ,y)]v(y)[f_1(y) -
        f_1(x)] dy \\
        & \qquad +[f_1(y) - f_1(x)]\int\limits_{\R^N} K(x,y)u(y) dy =: I\!I\!I_{1} + I\!I\!I_{2}.
    \end{align*}
Utilizing an argument similar to that employed in the derivation of \eqref{new1}, along with the estimate (iii) for $f_1$, we deduce
    $$\vert  I\!I\!I_{2} \vert \lesssim \vert f_1(x) - f_1(x^\prime )\vert
    \lesssim {{\delta
            ^\beta }\over{R^\beta }}{{r^\gamma }\over{R^{N+ \gamma }}}.$$

Furthermore, upon noting
$$\vert f_1(y) - f_1(x)\vert \vert  v(y)\vert \lesssim
    {{ d
            (x,y)^\beta }\over{R^\beta }}{{r^\gamma }\over{R^{N+\gamma }}}$$
for all $y\in \R^N,$  it becomes apparent that $I\!I\!I_{1}$ is majorized by
    \begin{align*}
       |I\!I\!I_{1}| &\lesssim \int\limits
        _{d (x,y)\geq 2\delta }{{d (x,x^ \prime )^\varepsilon
            }\over{d (x,y)^{N+\varepsilon}}}{{d (x,y)^\beta
            }\over{R^\beta }} \frac{r^\gamma }{R^{N+\gamma }} dy  \lesssim {{\delta ^\beta }\over{R^\beta }}{{r^\gamma }\over{R^{N+ \gamma}}}
    \end{align*}
since $\beta < \varepsilon$. We have proved that
$$\vert  T(f_1)(x) - T(f_1)(x^\prime )\vert \lesssim {{\delta
            ^\beta }\over{R^\beta }}{{r^\gamma }\over{R^{N+\gamma }}}.$$

It is important to note that for $d (x,x^\prime )=\delta \leq {{1}\over{20}}( r +
    R)$ and $R \geq 10r$, the points $x$ and $x'$ do not lie within the support of $f_2$ and $f_3$.
By invoking the properties of $K$ and the estimate for $f_2$ as specified in (v), we can derive the following inequality:
\begin{align*}
        & \vert  T(f_2)(x) - T(f_2)(x^\prime )\vert =\bigg\vert
        \int [ K(x,y) - K(x^\prime,y)]f_2(y) dy\bigg\vert\\
        &\qquad \leq \int \vert  K(x,y) - K(x^\prime,y) - K(x,x_0) - K(x^ \prime
        ,x_0)\vert \vert f_2(y)\vert  dy\\
        &\qquad\quad +\vert K(x,x_0) - K(x^\prime ,x_0)\vert \bigg\vert \int f_2(y) dy
        \bigg\vert \\
        &\qquad \lesssim
       \int\limits_{d ({x_0},y)\leq {{R}\over{4}}} {{d
                (x,x^\prime )^\varepsilon d (y,x_0)^\varepsilon }\over{R^{2+\varepsilon}}}{{r^\gamma }\over{d(y,x_0)^{N+\gamma }}} dy+{{\delta ^\varepsilon }\over{R^{N+\varepsilon }}} \frac{r^\gamma
        }{R^\gamma } \\
        & \qquad \lesssim {{\delta ^\varepsilon }\over{R^\varepsilon }}{{r^\gamma }
            \over{R^{N+\gamma }}}.
    \end{align*}

    Finally,
    \begin{align*}
     \vert  T(f_3)(x) - T(f_3)(x^\prime )\vert &=\bigg\vert
        \int [ K(x,y) - K(x^\prime ,y)]f_3(y) dy\bigg\vert\\
        & \lesssim \int\limits_{d (x,y)\geq {{R}\over{8}}\geq 2\delta
        }{{d (x,x^\prime )^\varepsilon }\over{d (x,y)^{N+\varepsilon}
        }}\vert f_3(y)\vert  dy
        \lesssim {{\delta ^\varepsilon }\over{R^ \varepsilon
        }}{{r^\gamma }\over{R^{N+\gamma }}}.
    \end{align*}
These inequalities demonstrate that the operator $T(f)(x)$ satisfies the required smoothness properties when $d (x,x_0)=
    R\geq 10 r$ and $d (x,x^\prime )=\delta \leq {{1}\over{20}}( r
    + R).$

We now consider the other cases. Note first that when $d(x, x_0) = R$ and ${{1}\over{2}}( r + R)\geq d (x,x^\prime )=\delta
    \geq {{1}\over{20}}( r + R),$ the smoothness estimate for $T(f)(x)$ follows directly from the size estimate of $T(f)(x)$:
\begin{align*}
    |Tf(x)-Tf(x')|
    \lesssim |Tf(x)|+|Tf(x')|\lesssim  \frac{r^\gamma}{(d(x,x_0))^{N+\gamma}}.
    \end{align*}
Therefore, it suffices to examine the situation where $R\leq 10 r$ and $\delta
    \leq {{1}\over{20}}( r + R).$ This case is analogous and indeed more straightforward. The primary adjustment required is to simply substitute $R$ with $r$ in the proof sequence detailed above. We will omit the repetition of these details here. The proof of Theorem \ref{th2.1} is concluded.
\end{proof}

In applying Theorem \ref{th2.1}, we aim to establish Calder\'{o}n's reproducing formula pertinent to reflection groups. The endeavor begins with the construction of a $G$-invariant approximation of the identity, drawing inspiration from Coifman's methodology, as referenced in \cite{DJS}. To proceed with precision, let us consider a function $h$ belonging to $C^1(\mathbb{R})$ such that $h(t)$ equals 1 for $|t|\leq 1$, vanishes when $|t|\geq 2$, and  $h(t)\in [0, 1]$ for all $t\in\mathbb{R}$. For every integer $k\in\mathbb{Z}$, we define
$$
 T_k ( f )(x)=\int\limits_{\R^N} h(2^{k}d(x,y)) f(y)dy.
$$
It is evident that
\begin{equation}\label{n2.6}
|\mathcal{O}_B(x,2^{-k})| \leq T_k(1)(x)\leq |\mathcal{O}_B(x,2^{1-k})|,
\end{equation}
 which implies that $T_k(1)(x)$ is comparable to $2^{-kN}$ as $|\mathcal{O}_B(x,r)|\sim|B(x,r)|\sim r^N.$ Consequently,
$$
T_k\left( \frac{1}{T_k(1)} \right)(x)=\int\limits_{\R^N} h(2^{k}d(x,y)) \frac{1}{T_k(1)(y)}\,dy \approx 1.
$$
We proceed to define two multiplication operators $M_k(f)(x)=\big( T_k(1)(x)\big)^{-1}f(x)$ and $W_k(f)(x)=\big(T_k( {1\over T_k(1)})(x)\big)^{-1}f(x)$, and the operator $S_k(f)(x)=M_kT_kW_kT_kM_k(f)(x).$ We claim that $S_k(x,y)$, the kernel of $S_k$, satisfies the following conditions:
\begin{eqnarray*}
    &\textup{\bf (i)}& S_k(x,y) = 0 {\rm\ for\ } d(x,y) \geq 2^{2-k},
    {\rm\ and\ } \| S_k\|_{\infty} \lesssim 2^{kN},\\
    &\textup{\bf (ii)}& |S_k(x,y)-S_k(x',y)|
    \lesssim {d(x,x')\over 2^{-k}} 2^{kN},\\
    &\textup{\bf (iii)}& |S_k(x,y)-S_k(x,y')|
    \lesssim {d(y,y')\over 2^{-k}} 2^{kN},\\
    &\textup{\bf (iv)}& |[S_k(x,y)-S_k(x',y)]-[S_k(x,y')-S_k(x',y')]|
    \lesssim {d(x,x')\over 2^{-k}}{d(y,y')\over 2^{-k}} 2^{kN},\\
    &\textup{\bf (v)}& \int\limits_{\R^N}S_k(x,y) dy
    = \int\limits_{\R^N}S_k(x,y) dx=1,\\
    &\textup{\bf (vi)}& \text{for all}\ f\in L^2_G(\R^N), \lim\limits_{k\to +\infty}\Big\|\int\limits_{\R^N}S_k(\cdot,y)f(y) dy-f(\cdot)\Big\|_2=0,\\
    &\textup{\bf (vii)}&\text{for all}\ f\in L^2_G(\R^N), \lim\limits_{k\to -\infty}\Big\|\int\limits_{\R^N}S_k(\cdot,y)f(y) dy\Big\|_2=0.
\end{eqnarray*}

To verify the aforementioned claim, observe that $S_k(x,y)=S_k(y,x)$  and $S_k(1)=1$. Thus, condition {\bf (v)} is satisfied. To verify condition {\bf (i)}, we express
\begin{align*}
    S_k(x,y)={1\over T_k(1)(x)}\bigg\{ \int\limits_{\R^N} h(2^{k}d(x,z)) {1\over T_k\Big( {1\over T_k(1)} \Big)(z) } h(2^{k}d(z,y))dz \bigg\}{1\over T_k(1)(y)}.
\end{align*}
Given the support conditions for $h$, if $S_k(x,y)\neq 0$, then $d(x,y)\leq 2^{2-k}$. The desired estimate for $L^\infty$ norm of $S_k$ follows from \eqref{n2.6}, establishing {\bf (i)}.

Considering condition {\bf (ii)}, by virtue of {\bf (i)}, we need only examine the case where $d(x,x')\leq 2^{-k}.$  Indeed, if $d(x,x')>2^{-k}$ then {\bf (ii)} directly follows from {\bf (i)}.

To establish {\bf (ii)} for the case where $d(x,x')\leq 2^{-k}$, we write
\begin{align*}
    &S_k(x,y)-S_k(x',y)\\
    &=\Big[{1\over T_k(1)(x)}-{1\over T_k(1)(x')}\Big]\bigg\{ \int\limits_{\R^N} h(2^{k}d(x,z)) {1\over  T_k\Big( {1\over T_k(1)} \Big)(z) } h(2^{k}d(z,y)) dz \bigg\}{1\over T_k(1)(y)}\\
    &\hskip.2cm+{1\over T_k(1)(x')}
    \bigg\{ \int\limits_{\R^N} [h(2^{k}d(x,z))-h(2^{k}d(x',z))] {1\over  T_k\Big( {1\over T_k(1)} \Big)(z) } h(2^{k}d(z,y)) dz \bigg\}{1\over T_k(1)(y)}\\
    &=: Z_1+Z_2.
\end{align*}

For $d(x,x')\leq 2^{-k},$ by the regularity on the function $h,$ we have
\begin{align*}
    \Big|{1\over T_k(1)(x)}-{1\over T_k(1)(x')}\Big|= \Big|\frac{\int_{\R^N} (h( 2^{k}d(x,y))-h(2^{k}d(x',y)))dy}{T_k(1)(x)T_k(1)(x')}\Big|
    \lesssim 2^{kN}\cdot 2^{k} d(x,x'),
\end{align*}
which implies that
$$
|Z_1|\lesssim \frac{d(x,x')}{2^{-k}}2^{kN} \, .
$$
The estimate for $Z_2$ is similar, the details being omitted.

The proof of {\bf (iii)} is similar to that for {\bf (ii)}, and {\bf (iv)} follows from {\bf (ii)} and {\bf (iii)}.

Now we check {\bf (vi)}. Observe if $f$ is a continuous function with compact support and $f(x)=f(\sigma(x))$ then
$\int\limits_{\R^N}S_k(x,y)f(y)dy$ converges to $f(x)$ for almost every $x\in\R^N$ as $k\to +\infty.$ This fact together with the following maximal function estimate
$$\sup\limits_{k\in\mathbb{Z}}\Big|\int\limits_{\R^N}S_k(x,y)f(y)dy\Big|\lesssim M(f)(x),$$
where $M(f)(x)$ is the  uncentered Hardy-Littlewood maximal function, implies {\bf (vi)} since the subspace of $G-$invariant  continuous functions with compact support is dense in $L_G^2(\R^N)$ (Lemma \ref{le4.3} in the appendix).

Finally the proof of {\bf (vii)} follows from the following estimate:
$$\Big|\int\limits_{\R^N}S_k(x,y)f(y) dy\Big|\lesssim {2^{kN/2}} \|f\|_{L_G^2} \to 0, \text{ as } k \to -\infty.$$
The claim is thus proved.
We remark that $S_k(x,y)$ is invariant under the actions of the reflection group $G$.

Let $D_k=S_k - S_{k-1}$ and $D_k(x,y)$ be the kernel for $k\in \Z$. Then
$D_k(\cdot ,y)\in \M(\varepsilon,\varepsilon,2^{-k},y),$ where $0 < \varepsilon \leq 1$ for any fixed $y$ and $k$,
and similarly, $D_k(x,\cdot)\in \M(\varepsilon,\varepsilon,2^{-k},x)$ for fixed $x$ and $k$.

We need the following estimates for the approximation to the identity; see \cite{DH} for further details.
\begin{lem} \label{d-difference} %\cite[Lemmas 3.7 and 3.11]{DH}
    Let $0<\varepsilon \leq 1$. Suppose that $\{S_k\}_{k\in \mathbb{Z}}$ is an approximation to the identity. Set $D_k=S_k - S_{k-1}$ for all $k\in \mathbb Z.$ Then for  $0<\varepsilon^\prime <\varepsilon,$  there exists a constant $C$ which depends on $\varepsilon^\prime$ and $\varepsilon$, but not on $k, l$ such that
    \begin{equation}\label{e3.1.18}
    | D_kD_l(x, y)| \leq C 2^{-|k-l|\varepsilon^\prime}{2^{{-(k\wedge l )}{\varepsilon}
        }\over{(2^{{-(k\wedge l )}} + d(x,y))^{N+\varepsilon}}};
    \end{equation}
    for $d(x,x')\leq \frac{1}{2}(2^{-(k\wedge l)}
    +d(x,y))$
    \begin{equation}\label{e3.1.19}
    \quad | D_kD_l(x, y) - D_kD_l(x^\prime, y)|\leq
    C\Big(\frac{d(x,x')}{2^{-(k\wedge l)} + d(x,y)}\Big)^{\varepsilon } \frac{2^{-(k\wedge l)\varepsilon}} {(2^{-(k\wedge l)} + d(x,y))^{N+\varepsilon}}.
    \end{equation}
    Similar estimates hold whenever $x$ and $y$ are interchanged.

    \begin{align}\label{(e3.1.23)}
\begin{split}
    &| D_kD_l(x, y) - D_kD_l(x^\prime, y) - D_kD_l(x,y^\prime
    )+D_kD_l(x^\prime, y^\prime )| \hspace{-1cm} \\
    & \quad \leq C\Big(\frac{d(x,x')}{2^{-(k\wedge l)} + d(x,y)}\Big)^{\varepsilon^\prime }
    \Big(\frac{d(y, y^\prime)}{2^{-(k\wedge l)}
        + d(x,y)}\Big)^{\varepsilon^\prime }
    \frac{2^{-(k\wedge l)(\varepsilon-\varepsilon^\prime) }} {(2^{-(k\wedge l )} + d(x,y))^{N+\varepsilon-\varepsilon^\prime }}
\end{split}
    \end{align}
    for $d(x,x')\leq \frac{1}{2}(2^{-(k\wedge l )} +d(x,y))$ and $d(y,y')\leq \frac{1}{2} ( 2^{-(k\wedge
        l)}+d(x,y)),$ where $k\wedge l=\min(k,l).$
\end{lem}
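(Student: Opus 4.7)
The plan is to prove the three estimates by a standard almost orthogonality argument exploiting the mean-zero property of $D_k$ in both variables. This follows from {\bf(v)}: since $\int_{\R^N} D_k(x,y)\,dy=\int_{\R^N} S_k(x,y)\,dy-\int_{\R^N} S_{k-1}(x,y)\,dy=1-1=0$, and similarly for integration in $x$. By symmetry I assume throughout that $k\le l$, so $k\wedge l=k$; the case $k>l$ is treated analogously, interchanging the roles of the two indices and using the mean-zero property of $D_k$ instead.

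For \eqref{e3.1.18}, I exploit the cancellation of the finer-scale factor $D_l$ by writing
\begin{equation*}
D_kD_l(x,y)=\int_{\R^N}[D_k(x,z)-D_k(x,y)]\,D_l(z,y)\,dz.
\end{equation*}
On the support of $D_l(\cdot,y)$ one has $d(z,y)\lesssim 2^{-l}\le 2^{-k}$, so the H\"older-$\varepsilon$ regularity {\bf(iii)} of $D_k$ in the second variable applies. This produces a gain of $(d(z,y)/(2^{-k}+d(x,y)))^{\varepsilon}\lesssim 2^{-(l-k)\varepsilon}$; after integrating $|D_l(z,y)|$ in $z$ (which contributes a uniform $L^1$ factor), one obtains the claimed bound, valid for any $\varepsilon'\leq \varepsilon$. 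The slack $\varepsilon-\varepsilon'$ is retained for use in \eqref{(e3.1.23)}.

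For \eqref{e3.1.19}, apply the same subtraction to both $D_kD_l(x,y)$ and $D_kD_l(x',y)$, yielding
\begin{equation*}
D_kD_l(x,y)-D_kD_l(x',y)=\int_{\R^N}\bigl\{[D_k(x,z)-D_k(x,y)]-[D_k(x',z)-D_k(x',y)]\bigr\}D_l(z,y)\,dz.
\end{equation*}
Bound the bracketed double difference by means of the joint regularity estimate {\bf(iv)} for $D_k$, and again use $d(z,y)\lesssim 2^{-l}$ to extract simultaneously the $2^{-|k-l|\varepsilon'}$ decay factor and the desired H\"older factor in $d(x,x')$.

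For \eqref{(e3.1.23)}, the key algebraic identity is
\begin{equation*}
D_kD_l(x,y)-D_kD_l(x',y)-D_kD_l(x,y')+D_kD_l(x',y')=\int_{\R^N}[D_k(x,z)-D_k(x',z)]\,[D_l(z,y)-D_l(z,y')]\,dz,
\end{equation*}
obtained by direct expansion of the four terms. I then apply H\"older-$\varepsilon'$ regularity to $D_k$ in its first variable (via {\bf(ii)}) and to $D_l$ in its second (via {\bf(iii)}), interpolating between the available Lipschitz regularity and the $L^\infty$ size bound to extract the factors $d(x,x')^{\varepsilon'}$ and $d(y,y')^{\varepsilon'}$ separately; the exponent loss $\varepsilon-\varepsilon'$ in the denominator is precisely the cost of this interpolation. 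The main obstacle is reconciling the natural denominators $2^{-k}+d(x,z)$ and $2^{-l}+d(z,y)$ with the conclusion's $2^{-k}+d(x,y)$: this is handled by noting that on the support of $D_l(\cdot,y)$, $d(z,y)\lesssim 2^{-l}\le 2^{-k}$, so $2^{-k}+d(x,z)\sim 2^{-k}+d(x,y)$, which permits the replacement and closes the argument.
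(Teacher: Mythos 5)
Your outline is correct and is essentially the standard almost-orthogonality argument (cancellation of the finer-scale factor, H\"older/Lipschitz regularity of the coarser one, compact supports to identify $2^{-(k\wedge l)}+d(x,y)\sim 2^{-(k\wedge l)}$, and a geometric-mean interpolation to trade the $|k-l|$-decay for the second H\"older factor at the cost of $\varepsilon\to\varepsilon'$). The paper itself gives no proof of this lemma and simply cites Deng--Han \cite{DH}, where the same scheme you describe is carried out; the only point to be slightly careful about, which you do flag, is that in the case $k>l$ the single- and double-difference estimates require the mean-zero property of $D_k$ in its second variable combined with the smoothness of $D_l$, rather than a purely formal interchange of indices.
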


Let $x_0\in \Bbb R^N$ be fixed and define ${\M}(\beta,\gamma )=
{\M}(x_0, 1, \beta,\gamma ).$ It is evident that for any $x_1 \in \mathbb{R}^N$ and $r > 0$, we have  ${\M}(x_1, r, \beta,\gamma )={\M}(\beta, \gamma )$ with equivalent norms.

The subsequent key result of this section is the Calder\'{o}n's reproducing formula.

\begin{thm}\label{th2.2}
Suppose that $\lbrace D_k(x,y)\rbrace _{k\in
\mathbb Z}$ are same as given in Theorem \ref{th2.1}. Then there exist two families of operators, namely $\lbrace {\widetilde D}_k\rbrace _{k\in
\mathbb Z}$ and$\lbrace{\widetilde {\widetilde D}}_k\rbrace _{k\in
\mathbb Z}$, with the property that for any $f\in {\M}(\beta,\gamma ),$ the following equality holds:
\begin{align}\label{(e3.2.2)}
f =\sum_{k\in \Bbb Z} {\widetilde D}_kD_k(f)= \sum_{k\in \Z} D_k{\widetilde{\widetilde D}}_k(f),
\end{align}
where the series converge in the norm of ${\M}(\beta ^\prime
,\gamma ^\prime )$, with $0<\beta ^\prime <\beta $ and $0<\gamma
^\prime <\gamma.$ Furthermore, the kernels ${\widetilde D}_k(x,y)$ of the operators ${\widetilde
D}_k,$ considered as functions of $x$, belong to the space ${\M}(y,2^{ -k},\varepsilon ^\prime,\varepsilon ^\prime )$ for any fixed $y$. These kernels satisfy the cancellation conditions
$$\int\limits_{\R^N} {\widetilde D}_k(x,y) dy=\int\limits_{\R^N} {\widetilde D}_k(x, y) dx= 0,\qquad \forall\, k\in \mathbb Z.$$
for all  $k\in \mathbb Z$. Similarly, the kernels ${\widetilde{\widetilde D}}_k(x,y)$ of the operators ${\widetilde{\widetilde D}}_k,$ viewed as functions of $y$, are elements of  ${\M}(x,2^{ -k},\varepsilon ^\prime,\varepsilon ^\prime )$ for any fixed $x$ and also fulfill the cancellation conditions
$$\int\limits_{\R^N} {\widetilde{\widetilde D}}_k(x, y) dy=\int\limits_{\R^N} {\widetilde{\widetilde D}}_k(x, y) dx= 0,\qquad \forall \, k\in \mathbb Z.$$

\end{thm}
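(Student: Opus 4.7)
The plan is to follow Coifman's construction from \cite{DJS} adapted to the $G$-invariant setting. Starting from the formal expansion
$$I \;=\; I\circ I \;=\; \sum_{l,k\in\mathbb Z} D_l D_k,$$
I fix a large integer $N$ (to be chosen) and split the double sum into a near-diagonal and an off-diagonal part,
$$I \;=\; \widetilde T_N + \widetilde R_N, \qquad \widetilde T_N \;=\; \sum_{k\in\mathbb Z} \widetilde D_k^{\,N}\, D_k, \qquad \widetilde R_N \;=\; \sum_{|l-k|>N} D_l D_k,$$
where $\widetilde D_k^{\,N} := \sum_{|l-k|\le N} D_l$.

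The crucial almost-orthogonality estimate I aim to establish is
$$\|\widetilde R_N\|_{\mathcal M(\beta',\gamma',r,x_0) \to \mathcal M(\beta',\gamma',r,x_0)} \;\le\; C\, 2^{-N\varepsilon'},$$
valid for some $\varepsilon' > 0$ with $C$ independent of $r$ and $x_0$. Testing $\widetilde R_N$ against a molecule $g\in \mathcal M(\beta',\gamma',r,x_0)$, the pointwise kernel bounds \eqref{e3.1.18}, \eqref{e3.1.19} and \eqref{(e3.1.23)} from Lemma \ref{d-difference} translate, via the standard molecule-calculus computation, into the molecule-norm estimate $\|D_lD_kg\|_{\mathcal M(\beta',\gamma',r,x_0)} \lesssim 2^{-|l-k|\varepsilon'}\|g\|_{\mathcal M(\beta',\gamma',r,x_0)}$. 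The cancellation $\int D_lD_k(x,y)\,dy = \int D_lD_k(x,y)\,dx = 0$ needed to regard $D_lD_kg$ as a molecule is inherited from property \textbf{(v)} of $\{S_k\}$. Summing geometrically over $|l-k|>N$ produces the desired factor $2^{-N\varepsilon'}$.

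Once $N$ is chosen so that $\|\widetilde R_N\| < \tfrac{1}{2}$, the Neumann series $\widetilde T_N^{-1} = \sum_{n\ge 0}\widetilde R_N^{\,n}$ converges and defines a bounded operator on every $\mathcal M(\beta',\gamma',r,x_0)$ with norm bound independent of $r$ and $x_0$. I then set $\widetilde D_k := \widetilde T_N^{-1}\,\widetilde D_k^{\,N}$, so that for any $f\in\mathcal M(\beta,\gamma)$
$$f \;=\; \widetilde T_N^{-1}\widetilde T_N f \;=\; \sum_{k\in\mathbb Z}\widetilde T_N^{-1}\widetilde D_k^{\,N} D_k f \;=\; \sum_{k\in\mathbb Z}\widetilde D_k D_k f$$
in the $\mathcal M(\beta',\gamma')$-norm; partial-sum convergence of $\sum_k \widetilde D_k^{\,N}D_k f$ to $\widetilde T_N f$ follows from the same almost-orthogonality estimate together with the telescoping $\sum_{|k|\le M} D_k = S_M - S_{-M-1}$. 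The stated properties of $\widetilde D_k$ are then checked directly. Cancellation $\widetilde D_k\mathbf 1 = 0$ holds because $\widetilde D_k^{\,N}\mathbf 1 = 0$ (each $D_l\mathbf 1 = 0$ by \textbf{(v)}); the dual cancellation $\widetilde D_k^{\,*}\mathbf 1 = 0$ follows from $(\widetilde T_N^{-1})^{*}\mathbf 1 = \mathbf 1$ (itself a consequence of $\widetilde R_N^{\,*}\mathbf 1 = 0$). Finally, since each $D_l(\cdot,y)\in \mathcal M(y,2^{-l},\varepsilon,\varepsilon)$, the finite sum $\widetilde D_k^{\,N}(\cdot,y)$ lies in $\mathcal M(y,2^{-k},\varepsilon,\varepsilon)$ with constant depending only on $N$, and applying the bounded operator $\widetilde T_N^{-1}$ in the $x$-variable places $\widetilde D_k(\cdot,y)$ in $\mathcal M(y,2^{-k},\varepsilon',\varepsilon')$.

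The second expansion $f = \sum_k D_k\,\widetilde{\widetilde D}_k f$ is obtained by running the same argument with the order of composition reversed: take $D_k^{\,N} := \sum_{|l-k|\le N} D_l$, set $T_N = \sum_k D_k\, D_k^{\,N}$ with remainder $R_N = \sum_{|l-k|>N} D_k D_l$, invert $T_N$ by Neumann series, and define $\widetilde{\widetilde D}_k := D_k^{\,N}\,T_N^{-1}$. The main obstacle I anticipate is precisely the almost-orthogonality bound for $\widetilde R_N$ at the level of the $\mathcal M(\beta',\gamma',r,x_0)$-norm, uniformly in the centre $x_0$ and scale $r$ of the test molecule: Lemma \ref{d-difference} only supplies pointwise kernel estimates, and converting them into molecule-norm estimates requires a careful molecule-calculus argument on each summand, followed by a geometric summation in $|k-l|$.
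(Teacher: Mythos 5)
Your overall architecture matches the paper's: the Coifman splitting $I=T_M+R_M$, the Neumann-series inversion of $T_M$, the definitions $\widetilde D_k=T_M^{-1}D_k^M$ and $\widetilde{\widetilde D}_k=D_k^MT_M^{-1}$, and the verification of cancellation and molecule membership are all the same. The gap is in how you propose to prove the key bound $\|R_M\|_{\M(\beta',\gamma',r,x_0)\to\M(\beta',\gamma',r,x_0)}\lesssim 2^{-M\delta}$. You want to show $\|D_lD_kg\|_{\M}\lesssim 2^{-|l-k|\varepsilon'}\|g\|_{\M}$ term by term and then ``sum geometrically over $|l-k|>N$.'' But $R_M=\sum_{|j|>M}\sum_{k\in\Z}D_{k+j}D_k$: for each fixed offset $j$ there are \emph{infinitely many} values of $k$, so a bound on each summand depending only on $|l-k|=|j|$ gives $\sum_{|j|>M}\sum_k 2^{-|j|\varepsilon'}=\infty$. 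To make a term-by-term argument work you would need an additional decay factor comparing the scale $2^{-(k\wedge l)}$ of $D_lD_k$ with the scale $r$ of the test molecule (something like $\min(2^{-k}/r,\,r/2^{-k})^{\delta}$, coming from the cancellation of $g$ against $D_lD_k$ and vice versa); that extra factor is exactly the nontrivial part of the ``molecule calculus'' and is absent from your stated estimate. The paper avoids this entirely: it first sums the kernel bounds of Lemma \ref{d-difference} over $k$ for fixed $j$, obtaining that $R_M$ is itself a singular integral operator whose size, smoothness and double-difference constants carry the factor $2^{-M\delta}$, checks $R_M(1)=R_M^*(1)=0$ and $\|R_M\|_{L^2_G\to L^2_G}\lesssim 2^{-M\varepsilon'}$ by Cotlar--Stein, and then invokes Theorem \ref{th2.1} (boundedness of such operators on $\M$) as a black box. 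Your proposal never uses Theorem \ref{th2.1}, which is the tool the paper built precisely for this step.

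A secondary, smaller issue: your treatment of the convergence of the partial sums $\sum_{|k|\le M}\widetilde D_kD_kf$ is too quick. The telescoping identity $\sum_{|k|\le M}D_k=S_M-S_{-M-1}$ concerns $\sum_kD_k$, not $\sum_kD_k^MD_k$, and does not directly control the tail. The paper needs the separate estimate $\bigl\|\sum_{|k|>M}D_k^MD_k(f)\bigr\|_{\M(\beta',\gamma')}\lesssim 2^{-\sigma M}\|f\|_{\M(\beta,\gamma)}$, whose proof (pointwise size and regularity bounds for the tail, then a geometric-mean interpolation) exploits the drop from the indices $(\beta,\gamma)$ to the strictly smaller $(\beta',\gamma')$; this index loss is not cosmetic but is what makes the tail small, and your sketch does not account for it.
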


\begin{proof}
Let $\{S_j\}$ be an approximation to the identity that satisfies the double Lipschitz condition, as well as the properties
    \begin{align*}
        \lim_{j\to \infty} S_j&=I,\qquad \text{the identity operator on }L_G^2(\R^N)\\
        \lim_{j\to -\infty} S_j&=0,
    \end{align*}
both in the strong operator topology on $\mathscr{B}(L^2_G(\R^N)).$ Set $D_j=S_{j+1}-S_j$. We apply Coifman's decomposition of the identity operator as follows: For any large positive integer $M$,
\begin{align*}
    f(x)&=\lim_{j\to \infty} S_j(f)(x)=\sum_{j\in \Z} D_j(f)(x)=\sum_{j\in \Z} \sum_{k\in \Z} D_k D_j (f)(x)\\
    &=\sum_{|j-k|\le M} D_k D_j (f)(x)+\sum_{|j-k|>M} D_k D_j(f)(x).
    \end{align*}
We note that David-Journ\'{e} and Semmes utilized the aforementioned Coifman decomposition in their work \cite{DJS} to demonstrate the $T1$ and $Tb$ theorems on spaces of homogeneous type.

To prove Theorem \ref{th2.2}, we need the following preliminaries.

\begin{prop} \label{proposition 3.2.3}
Let $\lbrace  S_k\rbrace _{k\in \mathbb Z}$ be an approximation to the identity and $M$ be a sufficiently large positive integer. Define $D_k=S_k - S_{k-1}$ for all $k\in \mathbb Z.$  Consider the operator $T_M= \sum_kD^M_kD_k$, where $D^M_k=\sum_{\vert j\vert \leq M}D_{k+j}$.  Then, the inverse $T^{-1}_M$ exists and there is a constant $C$ such that for any function $f\in {\M}(x_0,
    r, \beta,\gamma )$ with $x_0\in \R^N, r>0$ and $0<\beta,\gamma
    <\varepsilon$, $T^{-1}_M(f)\in {\M}({x_0},r, \beta,\gamma
    )$  and the following inequality holds:
    \begin{align}
    \Vert T^{-1}_M(f)\Vert _{{{\M}}({x_1},r, \beta,\gamma )}\leq
    C\|f \|_{{{\M}}({x_1},r, \beta,\gamma
        )}.\label{(e3.2.4)}
    \end{align}
\end{prop}
\begin{proof}
By Coifman's decomposition of the identity operator, we express the identity as
\begin{align*}
    I=\sum_k\sum_lD_kD_l= T_M + R_M,
\end{align*}
where $T_M= \sum\limits_{k\in \Z}\sum\limits_{\vert l \vert \leq M}D_{k+l}D_k$ and $R_M= \sum\limits_{\vert l \vert >M}\sum\limits_{k\in \Z}D_{k+l}D_k.$
The crucial estimate is as follows:
    \begin{align}\label{3.1}
    \|R_M(f)\|_{\M(\beta,\gamma,r,x_0)} & \le C 2^{-M\delta}\|f\|_{\M(\beta,\gamma,r,x_0)},
    \end{align}
for some $\delta>0$.

To show \eqref{3.1}, we shall apply Theorem \ref{th2.1}. Let $R_M(x, y)$ be the kernel
of the operator $R_M$. By Lemma \ref{d-difference},
\begin{align*}
    | R_M(x, y)| &=\bigg| \sum_{| l |
        >M}\sum\limits_{k\in \Z}D_{k+l}D_k(x,y) \bigg| \leq \sum_{
        | l | >M}\sum\limits_{k\in \Z}| D_{k+l}D_k(x,y) |\\
 &\lesssim \sum_{| l |
        >M}\sum\limits_{k\in \Z}2^{-| l  | {\varepsilon'}}{{2^{-((k+l)
                \wedge k)\varepsilon } }\over{(2^{-((k+l)\wedge k )} + d(x,y))^{N+\varepsilon }}}
                \lesssim 2^{-M{\varepsilon ^\prime }}d(x,y)^{-N},
    \end{align*}
which ensures that $R_M^1(x, y)$ decays rapidly with respect to the distance $d(x,y)$, fulfilling the required size condition \eqref{1.1}.

To verify the regularity condition \eqref{1.2}, we take the geometric mean of \eqref{e3.1.18} and \eqref{e3.1.19}, thereby obtaining that for $d(x,x')\leq \frac{1}{2} d(x,y),$
\begin{align}\label{(e3.2.12)}
\begin{split}
&| D_{k+l}D_k(x, y) - D_{k+l}D_k(x^\prime , y) | \\
& \quad \leq C2^{-| l | \delta }\left(\frac{d(x,x')}{2^{-((k+l)\vee k)}+d(x,y)}\right)^{\varepsilon'}
\frac{2^{-((k+l)\vee k){\varepsilon' }}}{(2^{-((k+l)\vee k)}+d(x,y))^{N+\varepsilon' }},
\end{split}
\end{align}
where  $0<\varepsilon ^\prime <\varepsilon $ and $\delta >0.$  Consequently, for
    $d(x,x')\leq \frac{1}{2} d(x,y),$ we have
\begin{align*}
        |R_M(x, y) - R_M(x^\prime, y)| &=\bigg| \sum_{|
            l | >M}\sum\limits_{k\in \Z} [D_{k+l}D_k(x,y) - D_{k+l}D_k(x^\prime,
        y)]\bigg| \\
        &\leq C\sum_{| l | >M}\sum\limits_{k\in \Z} 2^{- |l| \delta } \Big(\frac{d(x,x')}{2^{-(k\wedge l )}+d(x,y)}\Big)^{\varepsilon ^\prime} \frac{2^{-(k\wedge l )\varepsilon^\prime }}{(2^{-(k\wedge l )}+d(x,y))^{N+{\varepsilon ^\prime }}}\\
        &\leq C2^{-M\delta }d(x,x')^{\varepsilon'}d(x,
        y)^{-(N+{\varepsilon ^\prime })}.
    \end{align*}
The proof of the smoothness condition \eqref{1.3} follows the same logic.

Similarly, by applying the geometric mean of the estimates provided in Lemma \ref{d-difference}, we can corroborate the double difference condition \eqref{e:double}, as referenced in \cite{DH}.
Moreover, applying the Cotlar-Stein Lemma gives that $R_M$ is bounded on $L_G^2(\R^N)$ with $\|R_M\|_{L^2_G\to L^2_G}\leq 2^{-M{\varepsilon^\prime}}.$
The cancellation conditions $R_M(1)=R_M^*(1)=0$ follows from
$$\int\limits_{\R^N} D_k(x,y)dx=\int\limits_{\R^N} D_k(x,y)dy=0.$$ 

Now, equation \eqref{3.1} follows by invoking Theorem \ref{th2.1}.
This result together with the fact that $T^{-1}_M=\sum\limits
    _{k=0}^\infty (R_M)^k$ if $M$ is sufficiently large, implies (\ref{(e3.2.4)}) and hence Proposition \ref{proposition 3.2.3}.
\end{proof}

We resume the proof of Theorem \ref{th2.2}.
By Proposition \ref{proposition 3.2.3}, we have ${\widetilde
        D}_k(x, y)
    =T^{-1}_M[D^M_k(\cdot, y)](x),$ where the kernel of ${\widetilde D}_k$  belongs to the space  ${\M}(y,2^{ -k},\varepsilon ^\prime,\varepsilon ^\prime )$ with $0<\varepsilon ^\prime <\varepsilon $. The properties $\int\limits_{\R^N} {\widetilde D}_k(x, y) dy=\int\limits_{\R^N} {\widetilde D}_k(x, y) dx= 0$ for all $k\in \Z$ are consequences of the facts that $D^M_k(1)= (T^{-1}_M)^*(1) = 0.$ Similarly, define ${\widetilde{\widetilde
            D}}_k=D^M_kT^{-1}_M.$ Then, ${\widetilde{\widetilde
            D}}_k(x, y) = [D^M_k(x,\cdot)T_M^{-1}](y),$ the kernel of ${\widetilde{\widetilde D}}_k$ is in the space ${\M}(x, 2^{-k},\varepsilon ^\prime,\varepsilon ^\prime )$ with $0<\varepsilon ^\prime
    <\varepsilon $, and the properties $\int\limits_{\R^N}
    {\widetilde{\widetilde D}}_k(x, y) dy=\int\limits_{\R^N}
    {\widetilde{\widetilde D}}_k(x, y) dx= 0, \,k\in
    \mathbb Z,$ follow from the facts that $(D^M_k)^*(1)=T^{-1}_M(1)=
    0.$

All that remains is to establish the convergence of the series \eqref{(e3.2.2)} in  ${\M}(\beta
    ^\prime,\gamma ^\prime )$ for $0<\beta ^\prime <\beta $ and
    $0<\gamma ^\prime
    <\gamma$.

Let us first assume that $f\in {\M}(\beta,\gamma ).$
Then, the convergence of \eqref{(e3.2.2)} in ${\M}(\beta
    ^\prime,\gamma ^\prime )$ is equivalent to
$$\lim\limits_{M\rightarrow \infty } \bigg\Vert \sum
    \limits_{\vert k\vert \leq M}{\widetilde D}_kD_k(f) - f \bigg\Vert
    _{{\M}({\beta ^\prime },{\gamma ^\prime })}= 0$$
for $0<\beta ^\prime <\beta $ and $0<\gamma ^\prime <\gamma.$

Since
\begin{align*}
        \sum_{\vert k\vert \leq M}{\widetilde D}_kD_k(f)
&=T^{-1}_M
        \bigg( \sum_{\vert k\vert \leq M}D^M_kD_k(f)\bigg) =T^{-1}_M \bigg( T_M -
        \sum_{\vert k\vert >M} D^M_k D_k(f)\bigg)\\
        &= f - \lim\limits_{m\rightarrow \infty }R^m_M(f) - T^{-1}_M \bigg( \sum_{\vert k\vert >M}D^M_kD_k(f)\bigg),
    \end{align*}
to demonstrate the convergence of \eqref{(e3.2.2)} in $\M
    (\beta',\gamma') $, it is sufficient to show that
\begin{align}
    \lim\limits_{m\rightarrow \infty } \Vert R^m_N(f) \Vert _{{\M}({\beta ^\prime },{\gamma ^\prime })}= 0 \label{(e3.2.20)}
    \end{align}
and
\begin{align}
    \lim\limits_{M\rightarrow \infty } \bigg\Vert T^{-1}_M \bigg( \sum
    \limits_{\vert k\vert >M}D^M_kD_k(f)\bigg) \bigg\Vert _{{ \M}({\beta
            ^\prime },{\gamma ^\prime })}= 0.\label{(e3.2.21)}
    \end{align}

According to the estimates for $R_M$ and Theorem \ref{1.3},  we have for $0<\beta
    ^\prime <\beta $ and $0<\gamma ^\prime <\gamma,$
$$\Vert R^m_M(f) \Vert _{{\M}({\beta ^\prime },{
            \gamma ^\prime })}\leq (C2^{-M\delta })^m\Vert  f \Vert _{{\M
        }({\beta ^\prime },{\gamma ^\prime })}\leq (C2^{-M\delta })^m\Vert
    f \Vert _{{\M}(\beta ,\gamma )},$$
which confirms (\ref{(e3.2.20)}). The proof of (\ref{(e3.2.21)}) relies on the following estimate
\begin{align}
    \bigg\Vert \sum_{\vert k\vert >M}D^M_k D_k(f) \bigg\Vert _{{\M}({\beta ^\prime },{\gamma ^\prime })} \leq C2^{-\sigma M}\Vert  f
    \Vert _{{\M}( \beta,\gamma )}\label{(e3.2.22)}
    \end{align}
for all $0<\beta ^\prime <\beta $ and $0<\gamma ^\prime <\gamma $, where $\sigma >0$, and the constant $C$ is independent of $f$ and $M$.

Assuming (\ref{(e3.2.22)}) for the moment, by Proposition \ref{proposition 3.2.3}, for $0<\beta ^\prime <\beta $ and 0$<\gamma ^\prime <\gamma,$ we obtain
    \begin{align*}
        \bigg\Vert T^{-1}_M \bigg( \sum_{\vert k\vert >M}D^M_k D_k(f)\bigg) \bigg\Vert
        _{{\M}({\beta ^\prime },{\gamma ^\prime })}&\leq C\bigg\Vert
        \sum_{\vert k\vert >M}D^M_k D_k(f)\bigg\Vert _{{\M}({\beta^\prime },{\gamma ^\prime })} \leq C2^{-\sigma M} \Vert  f  \Vert _{{\M}(\beta,\gamma )},
    \end{align*}
which yields (\ref{(e3.2.21)}).

To show (\ref{(e3.2.22)}), employing a proof analogous to that presented in \cite[pp.~51-53]{DH}  we obtain that for $0<\beta
    ^\prime <\beta $ and $0<\gamma ^\prime <\gamma $, there exists a constant $C$ independent of both $f$ and $M$, along with some $\delta^\prime >0$, such that
    \begin{align}
    \bigg\vert \sum_{\vert k\vert >M}D^M_kD_k(f)(x) \bigg\vert \leq
    C2^{-\delta^\prime M}( 1 +d (x,x_0))^{-(1+{\gamma ^\prime })}\Vert  f
    \Vert _{{\M}(\beta,\gamma )},\label{(e3.2.23)}
    \end{align}
and
    \begin{align}\label{(e3.2.24)}
\begin{split}
    &\bigg\vert\sum_{\vert k\vert >M}D^M_kD_k(f)(x) - \sum
    _{\vert k\vert >M}D^M_kD_k(f)(x^\prime ) \bigg\vert  \\
    & \qquad \leq C \Big({{d (x,x^\prime )}\over{1 + d (x,x_0)}}\Big)^{\beta
        ^{\prime \prime }}( 1 +d (x,x_0))^{-(1+{\gamma ^\prime })}\Vert f
    \Vert _{{\M}(\beta,\gamma )}
\end{split}
    \end{align}
for  $d (x,x^\prime )\leq {{1}\over{2}}( 1 +d (x,x_0))$ and any $0<\beta ^\prime <\beta ^{\prime \prime }<\beta.$

By taking the geometric mean between (\ref{(e3.2.24)}) and the subsequent estimate
\begin{align}\label{(e3.2.25)}
\begin{split}
 &\bigg\vert \sum_{\vert k\vert >M}D^M_kD_k(f)(x) - \sum
    \limits_{\vert k\vert >M}D^M_kD_k(f)(x^\prime )
    \bigg\vert \\
    &\qquad\leq \bigg\vert \sum_{\vert k\vert >M}D^M_kD_k(f)(x) \bigg\vert
    +\bigg\vert \sum_{\vert k\vert >M}D^M_kD_k(f)(x^ \prime )\bigg\vert \\
    & \qquad \leq C2^{-\delta^\prime M }( 1 +d (x,x_0))^{-(1+{\gamma ^\prime
        })}\Vert f \Vert _{{\M}(\beta,\gamma )}
\end{split}
    \end{align}
    for $d (x,x^\prime )\leq {{1}\over{2}}( 1 +d
    (x,x_0)),$ we derive the following inequality:
\begin{align}\label{(e3.2.26)}
\begin{split}
    & \bigg\vert \sum_{\vert k\vert >M}D^M_kD_k(f)(x) - \sum
    \limits_{\vert k\vert >M}D^M_kD_k(f)(x^\prime ) \bigg\vert \\
    & \qquad \leq C2^{-M{\delta}}\Big({{d (x,x^\prime )}\over{1 + d
            (x,x_0)}}\Big)^{\beta ^\prime }( 1 +d (x,x_0))^{-(1+{\gamma ^\prime
        })}\Vert  f  \Vert _{{\M}(\beta,\gamma
        )}
\end{split}
    \end{align}
    for $d (x,x^\prime )\leq {{1}\over{2}}( 1 +d (x,x_0)).$

Incorporating  (\ref{(e3.2.23)}) and (\ref{(e3.2.26)}), along with the fact that
    \begin{align*}
    \int\limits_{\R^N} \sum_{\vert k\vert >M}D^M_kD_k(f)(x) dx
    = \int\limits_{\R^N} \sum_{\vert k\vert >M}D_k(f)(y)(D^M_k)^*(1)(y)dy= 0,
    \end{align*}
we deduce that
\begin{align}
    \sum_{\vert k\vert >M}D^M_kD_k(f)(x) dx
    \in {\M}(\beta^\prime, \gamma^\prime)\nonumber
    \end{align}
and that
\begin{align}
    \bigg\Vert \sum \limits_{\vert k\vert >M}D^M_kD_k(f) \bigg\Vert
    _{{\M}({\beta ^\prime },{\gamma ^\prime })}\leq
    C2^{-\delta^\prime M} \Vert  f  \Vert _{{\M}(\beta ,\gamma
        )},\nonumber
    \end{align}
which confirms (\ref{(e3.2.22)}). This concludes the proof of Theorem \ref{th2.2}.
\end{proof}

\section{Proof of theorem \ref{th1.1}}\label{sec:3}
We first present the definition of Besov spaces as follows.
\begin{definition}\label{def3.1}
    Let $f \in (\M(1,1,r,x_0))^\prime$ and $0<\alpha<1.$ The Besov space $\binfty$ is defined to consist all $f$ satisfying
    $$\|f\|_{\binfty}=\sup\limits_{\substack{k\in \Bbb Z \\ x \in \R^N}}2^{\alpha k}|D_kf(x)|<\infty.$$
    Similarly, the Besov space $\b1 $ comprises all $f$ such that
$$
    \|f\|_{\b1 }=\sum\limits_{k=-\infty}^\infty \int\limits_{\R^N} 2^{-\alpha k}|{\widetilde D}_k(f)(x)|dx<\infty,$$
    where $D_k$ and ${\widetilde D}_k$ are given as in Theorem \ref{th2.2}.
\end{definition}
It is important to note that these spaces are well-defined, meaning they are independent of the choice of approximations $S_k$ (and corresponding $D_k$).
Furthermore, for $0<\alpha<1$, the dual space of $\b1 $ is $\binfty$; see~\cite{HMY}. The following result characterizes the H\"older space:
\begin{pro}\label{pro3.1}
    A function $f$ belongs to the H\"older space ${{C}_G^\alpha}(\R^N)$ with $0<\alpha<1$ if and only if $f$ is in $\binfty$. Moreover,  $\|f\|_{{C}_G^\alpha}\sim \|f\|_{\binfty}.$
\end{pro}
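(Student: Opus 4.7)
The plan is to prove the two inclusions separately, using the cancellation property of $D_k$ for the easy direction and the reproducing formula from Theorem~\ref{th2.2} for the reverse direction.

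For the inclusion $C_G^\alpha(\R^N)\hookrightarrow\binfty$: since $\int_{\R^N} D_k(x,z)\,dz=0$ by property \textbf{(v)}, I would write
$$
D_k f(x)=\int_{\R^N} D_k(x,z)\bigl[f(z)-f(x)\bigr]\,dz.
$$
The kernel $D_k(x,\cdot)$ is supported in $\{d(x,z)\le 2^{2-k}\}$ and satisfies $\|D_k\|_\infty\lesssim 2^{kN}$, so on the support $|f(z)-f(x)|\le\|f\|_{G,\alpha}d(x,z)^\alpha\lesssim 2^{-k\alpha}\|f\|_{G,\alpha}$. Combined with $\int |D_k(x,z)|\,dz\lesssim 1$ (using $|\mathcal O_B(x,2^{2-k})|\lesssim 2^{-kN}$), this yields $|D_k f(x)|\lesssim 2^{-k\alpha}\|f\|_{G,\alpha}$, i.e.\ $\|f\|_{\binfty}\lesssim\|f\|_{C_G^\alpha}$.

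For the reverse inclusion $\binfty\hookrightarrow C_G^\alpha(\R^N)$: fix $\varepsilon'\in(\alpha,1)$, and invoke the Calder\'on reproducing formula $f=\sum_k\widetilde D_k D_k(f)$ from Theorem~\ref{th2.2}, where $\widetilde D_k(\cdot,z)\in\M(z,2^{-k},\varepsilon',\varepsilon')$. Given $x,y$ with $d(x,y)>0$, choose $k_0\in\Z$ with $2^{-k_0}\sim d(x,y)$ and split
$$
f(x)-f(y)=\sum_{k\le k_0}\int[\widetilde D_k(x,z)-\widetilde D_k(y,z)]D_k(f)(z)\,dz+\sum_{k>k_0}\int[\widetilde D_k(x,z)-\widetilde D_k(y,z)]D_k(f)(z)\,dz.
$$
For $k\le k_0$ (so $d(x,y)\le\frac12(2^{-k}+d(x,z))$ eventually), use the smoothness from membership in $\M(z,2^{-k},\varepsilon',\varepsilon')$:
$$
|\widetilde D_k(x,z)-\widetilde D_k(y,z)|\lesssim\Bigl(\tfrac{d(x,y)}{2^{-k}+d(x,z)}\Bigr)^{\varepsilon'}\frac{2^{-k\varepsilon'}}{(2^{-k}+d(x,z))^{N+\varepsilon'}}.
$$
Integrating in $z$ (partitioning $\R^N$ according to which $\sigma\in G$ realizes $d(x,z)=|x-\sigma(z)|$ reduces $\int(2^{-k}+d(x,z))^{-N-2\varepsilon'}dz$ to the Euclidean version, bounded by $2^{2k\varepsilon'}$) and using $|D_k f(z)|\lesssim 2^{-k\alpha}\|f\|_{\binfty}$ gives a contribution $\lesssim d(x,y)^{\varepsilon'}\|f\|_{\binfty}\sum_{k\le k_0}2^{k(\varepsilon'-\alpha)}\lesssim d(x,y)^\alpha\|f\|_{\binfty}$, since $\varepsilon'>\alpha$. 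For $k>k_0$ use the size bound on $\widetilde D_k$ separately at $x$ and $y$, yielding $\lesssim\|f\|_{\binfty}\sum_{k>k_0}2^{-k\alpha}\lesssim d(x,y)^\alpha\|f\|_{\binfty}$. Combining the two ranges gives $\|f\|_{C_G^\alpha}\lesssim\|f\|_{\binfty}$.

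The principal obstacle I anticipate is justifying that the reproducing series actually converges pointwise (or at least almost everywhere), so that the identity $f(x)-f(y)=\sum_k\int[\widetilde D_k(x,z)-\widetilde D_k(y,z)]D_k(f)(z)\,dz$ is legitimate rather than only a distributional statement; the estimates just derived provide the majorant needed to upgrade convergence in $\M(\beta',\gamma')'$ to pointwise convergence for Besov data. A secondary point is the $G$-invariance of $f$: since each $\widetilde D_k$ and $D_k$ is $G$-invariant and $d(x,\sigma(x))=0$, the Hölder bound itself forces $f(x)=f(\sigma(x))$ for every $\sigma\in G$, placing the limit in $C_G^\alpha(\R^N)$ as required.
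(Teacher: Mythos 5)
Your proposal is correct and follows essentially the same route as the paper: the forward inclusion via the cancellation of $D_k$ applied to $f(z)-f(x)$, and the reverse inclusion via the Calder\'on reproducing formula with the split at $k_0$ satisfying $2^{-k_0}\sim d(x,y)$, using the smoothness of $\widetilde D_k$ for low frequencies and its size for high frequencies. The convergence issue you flag is exactly the one the paper handles (by renormalizing the low-frequency tail with $\widetilde D_k(x_0,y)$ and checking uniform convergence on compacta), so there is no substantive difference.
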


\begin{proof}
Let $f$ be an element of ${{C}}_G^\alpha(\R^N)$ and $g$ be an element of $\M(\beta,\gamma,r,x_0)$ with  $0<\alpha<\gamma\le 1.$ Then, by virtue of the cancellation and size conditions imposed on $g$, we have
$$\Big|\int\limits_{\R^N}f(x)g(x)dx\Big|=\Big|\int\limits_{\R^N}[f(x)-f(x_0)]g(x)dx\Big|\le C\|f\|_{{{C}}_G^\alpha}\|g\|_{\M(\beta,\gamma,r,x_0)},$$
which implies that $f\in \binfty.$

In particular, $$|D_k(f)(x)|=\Big|\int\limits_{\R^N}D_k(x,y)[f(y)-f(x)]dy\Big|\le C2^{-\alpha k}\|f\|_{{C}_G^\alpha}$$
which implies that $\|f\|_{\binfty}\lesssim \| f\|_{\binfty}.$

We wish to emphasize that for $f$ belonging to ${{C}}_G^\alpha(\R^N)$ with $0<\alpha<\gamma\leq 1$, one can define a distributional pairing  $\langle f,g\rangle$ for any $ g\in \M(\beta,\gamma,r,x_0),$ through the bilinear form
$$\langle f,g\rangle =\sum\limits_{k=-\infty}^\infty \int\limits_{\R^N}{\widetilde D}_k(g)(x)D_k(f)(x)dx.$$
Indeed, if $g\in \M(\beta,\gamma,r,x_0),$ then $g\in \b1 (\R^N)$ and
$$\sum\limits_{k=-\infty}^\infty \int\limits_{\R^N}|{\widetilde D}_k(g)(x)D_k(f)(x)|dx\lesssim \|f\|_{\binfty}\|g\|_{\dot{B}_1^{-\alpha, 1}}.$$

To demonstrate that the inclusion $f\in \binfty$ implies $f\in {{C}}_G^\alpha,$ consider $f$ an element of $f\in \binfty$ and $g\in \M(\beta,\gamma,r,x_0)$ with $0<\alpha<\gamma.$  Utilizing the Calder\'on reproducing formula as stated in Theorem \ref{th2.2}, we can express
\begin{align*}\langle f,g\rangle =\bigg\langle \sum\limits_{k=-\infty}^0 {\widetilde{D}}_kD_k(f), g\bigg\rangle
    +\bigg\langle \sum\limits_{k=1}^\infty {\widetilde{D}}_k(g)D_k(f), g\bigg\rangle
    =:I+I\!I.
\end{align*}
Thanks to the cancellation property of $g$, we can write
$$I=\left\langle \sum\limits_{k=-\infty}^0 \int\limits_{\R^N}[{\widetilde{D}}_k(\cdot,y)-{\widetilde{D}}_k(x_0,y)]D_k(f)(y)dy, g \right\rangle .$$
Now, applying the smoothness condition of ${\widetilde{D}}_k$ and the fact that  $f\in \binfty$, we conclude that for any $0<\alpha<\varepsilon$,
$$\int\limits_{\R^N}\int\limits_{\R^N}\bigg|{\widetilde{D}}_k(x,y)-{\widetilde{D}}_k(x_0,y)
\bigg|\big|D_k(f)(y)\big|dy\lesssim {d(x,x_0)}^\varepsilon{2^{-k}}^{(\varepsilon-\alpha)}.$$

Consequently, the series $\sum\limits_{k=-\infty}^0 \int\limits_{\R^N}[{\widetilde{D}}_k(x,y)-{\widetilde{D}}_k(x_0,y)]D_k(f)(y)dy$
 converges uniformly for all $x$ with $d(x,x_0)\le M$, where $M>0$ is a fixed constant.
 This implies that the series converges to a continuous function on $\R^N.$

For the term $I\!I,$ the size estimate $|D_k(f)(x)|\lesssim
2^{-\alpha k}$  ensures that  $\sum\limits_{k=1}^\infty {\widetilde{D}}_kD_k(f)(x)$  converges uniformly on $\R^N$,
 thus it defines a continuous function for all $x$. Hence, $f$ is confirmed to be a continuous function.

To verify that $f\in
{{C}_G^\alpha}(\R^N),$ for any $x, x'\in \R^N$ we choose some $k_0$ such that $2^{-k_0}\le d(x,x')<2^{1-k_0}.$ We split $f(x)-f(x')$ into two parts: $A=\sum\limits_{k\le k_0}\int\limits_{\R^N
    }[{\widetilde{D}}_k(x,y)-{\widetilde{D}}_k(x',y)]D_k(f)(y)dy$ and $B=\sum\limits_{k>k_0}\int\limits_{\R^N
    }[{\widetilde{D}}_k(x,y)-{\widetilde{D}}_k(x',y)]D_k(f)(y)dy.$

For the first series $A$, applying the smoothness condition of ${\widetilde{D}_k}$ and the estimate of $D_k(f)$ yields
$$|A|\lesssim \|f\|_{\binfty}\sum\limits_{k\le k_0} 2^{(\varepsilon-\alpha) k
}d(x,x')^\varepsilon \lesssim d(x,x')^\alpha \|f\|_{\binfty}.$$
The estimates of $D_k(f)$ and the size condition of ${\widetilde{D}_k}$ give
$$|B|\lesssim \sum\limits_{k>k_0}
2^{-\alpha k}\|f\|_{\binfty}\lesssim d(x,x')^\alpha \|f\|_{\binfty}.$$
Hence, $f\in
{{C}_G^\alpha}(\R^N)$ with $\|f\|_{{C}_G^\alpha}\le C\|f\|_{\binfty}$.
\end{proof}

We are ready to show Theorem \ref{1.1}.

\begin{proof}
We now proceed to demonstrate Theorem \ref{1.1}. The proof is derived from the following estimate: there exists a constant $C$ such that for any $f\in
{C}_G^\alpha(\R^N)$ with $0<\alpha<\varepsilon,$ where $\varepsilon$ denotes the regularity exponent of the kernel of $T$, the inequality
\begin{equation}\label{ineq3.1}
|\langle D_k, Tf\rangle |\lesssim 2^{-k\alpha}\|f\|_{C_G^\alpha}
\end{equation}
holds, where $D_k(x, y)$ are the functions defined in Theorem \ref{th2.2} for any integer $k$.

To show the estimate in \eqref{ineq3.1}, assume that $f(x)\in {{C}}_G^\alpha(\R^N)$ with $0<\alpha<\varepsilon\le 1.$ Define $1=\xi(z)+\eta(z)$, where $\xi(z)=\theta\Big(\frac{d(z,x)}{2^{5-k}}\Big)$ and $\theta$ is a smooth function with compact support in $\mathbb{R}$, satisfying $\theta(x)=1$ for $|x|\le 1$ and $\theta(x)=0$ for $|x|\ge 2$. Given that $T(1)=0$ and $D_k(x, \cdot)\in {{C}}_{G,0,0}^1(\R^N)$ for any fixed integer $k$ and point $x\in \R^N$, we may express
\begin{align*}
&\langle D_k, Tf\rangle =\langle K(y,z),f(z)D_k(x,y)\rangle
= \langle K(y,z),[f(z)-f(x)] D_k(x,y)\rangle\\
&=\langle K(y,z),(\xi(z)+\eta(z))[f(z)-f(x)]D_k(x,y)\rangle
\\&=\langle K(y,z),\xi(z)(f(z)-f(y))D_k(x,y)\rangle +\langle
K(y,z),\xi(z)[f(y)-f(x)]D_k(x,y)\rangle
\\&\qquad +\langle K(y,z),\eta(z)[f(z)-f(x)]D_k(x,y)\rangle
\\&=:  I_1+I_2+I_3.
\end{align*}

For the initial term $I_1$, we have
\begin{align*}
|I_{1}|&\le   \iint\limits_{{d(y,z)\le
        20\cdot 2^{-k}}} \Big|K(y,z)\xi(z)[f(z)-f(y)] D_k(x,y)\Big|dydz \\
&\lesssim \iint\limits_{d(y,z)\le 20\cdot 2^{-k}} \frac{1}{d(y,z)^N} d(y,z)^\alpha|D_k(x,y)|
dzdy\cdot\|f\|_{C_G^\alpha}\\
&\lesssim 2^{-k\alpha}\|f\|_{C_G^\alpha}.
\end{align*}

Employing the same argument used for estimating the term $I_2$ in Theorem \ref{th1.1}, it follows that
\begin{equation}\label{3.2}
|T\xi(x)|\lesssim 1
\end{equation}
and thus,
$$|I_2|\lesssim \|[f(x)-f(\cdot)]D_k(x,\cdot)\|_1\lesssim 2^{-k\alpha}\|f\|_{C_G^\alpha}.$$

To estimate the term $I_3$, due to the cancellation property of $D_k(x,y)$, we have
$$\langle K(y,z),\eta(z)[f(z)-f(x)]D_k(x,y)\rangle
=\langle {K(y,z)-K(x,z)},\eta(z)[f(z)-f(x)]D_k(x,y)\rangle.$$
Noting that by the support conditions of $D_k(x,y)$ and $\eta(z)$, we are dealing with the case where $d(x,y) \le
5\cdot 2^{-k}$ and $d(z,x)\ge
20\cdot 2^{-k}$. This implies that $d(z,x)\ge \frac{1}{2}d(x,y)$. Consequently, invoking the smoothness of $K(x,z)$ and the H\"older continuity of $f\in {C}_G^\alpha$ with $0<\alpha<\varepsilon$, we obtain
\begin{align*}|I_3|
&\lesssim  \int\limits_{d(z,x)\ge 20\cdot 2^{-k}\ge \frac{1}{2}d(x,y)}
\frac{1}{d(x,z)^N}\Big(\frac{d(x,y)}{d(x,z)}\Big)^\varepsilon
d(x,z)^\alpha |D_k(x,y)|dzdy\|f\|_{C_G^\alpha}
\\&\lesssim  2^{-k\alpha}\|f\|_{C_G^\alpha},
\end{align*}
establishing \eqref{ineq3.1}.

To conclude the proof of Theorem \ref{1.1}, we note that for $f\in
{{C}}_G^\alpha$ and $g\in \M(\beta,\gamma,r,x_0),$ as previously discussed, the pairing $\langle f,g\rangle $ can be defined by
$$\langle f,g\rangle =\sum\limits_{k=-\infty}^\infty \int\limits_{\R^N }{\widetilde D}_k(g)(x)D_k(f)(x)dx.$$
This leads to
$$\langle Tf,g\rangle =\sum\limits_{k=-\infty}^\infty \int\limits_{\R^N }{\widetilde D}_k(g)(x)D_k(Tf)(x).$$
To confirm that this is well-defined, we apply \eqref{ineq3.1} to obtain
$|D_k(Tf)(x)|\lesssim 2^{-\alpha k}\|f\|_{C_G^\alpha}.$
Consequently,
$|\langle
Tf,g\rangle |\lesssim
\|f\|_{{\dot{C}}_G^\alpha}\|g\|_{{\dot{B}}_{1}^{-\alpha,1}}\lesssim
\|f\|_{{\dot{C}}_G^\alpha}\|g\|_{\M(\beta,\gamma,r,x_0)}.$
Therefore, $\langle Tf,g\rangle $ is indeed well defined, and we can infer that  $T(f)\in \M(\beta,\gamma,r,x_0)^\prime.$ Furthermore, by \eqref{ineq3.1} and the characterization presented in Proposition \ref{pro3.1}, we see that $Tf\in C_G^\alpha$ with
$\|Tf\|_{{{C}}_G^\alpha}\lesssim \|f\|_{{{C}}_G^\alpha}.$
This concludes the proof of Theorem \ref{1.1}.
\end{proof}

\section{Proof of theorems \ref{th1.2} and \ref{th1.3}} \label{sec:4}

We first show Theorem \ref{th1.3}.
\begin{proof}[{\bf The proof of Theorem \ref{th1.3}}]
To begin with our analysis, we initially present the weak type $(1,1)$ estimate. The fundamental approach involves the utilization of the classical Calder\'on-Zygmund decomposition. For this purpose, let $f\in L_G^2(\R^N)\cap L_G^1(\R^N)$ and $\lambda$ a positive real number. We define $E_\lambda=\{x:M(f)(x)>\lambda\},$ with $M(f)(x)=\sup\limits_{B\ni x}\frac1{|B|}\int\limits_{B}|f(y)|dy$ being the uncentered Hardy-Littlewood maximal function.
It is important to recognize that $E_\lambda$ is $G$-invariant; that is, $\sigma(E_\lambda)=E_\lambda,$  for every $\sigma \in G$, owing to the $G$-invariance of $f$. By adopting the Whitney decomposition for $E_\lambda$, we can express
$E_\lambda= \bigcup\limits_{j=1}^\infty Q_j=\bigcup\limits_{j=1}^\infty \sigma(Q_j),$ for all $\sigma\in G.$
We then define the functions $b_j^\sigma(x)$, $g^\sigma(x)$ for each $\sigma \in G$ by
$b_j^\sigma(x)=[f(x)-\frac1{|\sigma(Q_j)|}\int\limits_{\sigma(Q_j)}f(x)dx] \chi_{\sigma(Q_j)}(x),$
 $g^\sigma(x)=f(x)-\sum\limits_{j=1}^\infty b_j^\sigma(x),$
where $\chi_{\sigma(Q_j)}$ is the characteristic function of $\sigma(Q_j)$.

To capture the collective contributions of all group elements in $G$, we introduce the averaged functions $g(x)$ and $b_j(x)$ as
$g(x)=\frac1{|G|}\sum\limits_{\sigma\in G}g^\sigma(x),$
$b_j(x)=\frac1{|G|}\sum\limits_{\sigma\in G}b_j^\sigma(x).$
It becomes evident that there exists a constant $C > 0$ such that
\\
(i)
$f(x)=g(x)+\sum\limits_{j=1}^\infty b_j(x), x\in \R^N;$\\
(ii)
$|g(x)|=|f(x)|\le  \lambda,$ for\ almost $ x\notin
E_\lambda;$\\ (iii) $|g(x)|\le
C\lambda,$ for all $ x\in E_\lambda;$\\ (iv)
$|\bigcup\limits_{j=1}^\infty Q_j|\le  C\lambda^{-1}\|f\|_1;$\\
(v) $\|g\|_2\le  C \lambda^{1/2}\|f\|^{1/2}_1;$\\ (vi)
$\int\limits_{\R^N} |b_j(x)|dx\le  C\lambda|Q_j|;$\\ (vii)
$\int\limits_{\R^N} b_j(x) dx=0.$
\\ (viii)
$g(x)=g(\sigma(x)), \forall \sigma\in G.$

Let $\mathcal O(\bigcup\limits\limits_{j=1}^\infty
4\sqrt{N}Q_j)=\{x: d(x,x_{Q_j})\le  \ell(4\sqrt{N}Q_j),
\text{for each}\ j\}$ with $x_{Q_j},$ the center of $Q_j$, and $\ell(4\sqrt{N}Q_j)$ the side length of $4\sqrt{N}Q_j.$  Then,
$$|\mathcal O(\bigcup\limits_{j=1}^\infty 4\sqrt{N}Q_j)|\lesssim
\sum\limits_{j=1}^\infty|Q_j|\lesssim \lambda^{-1}\|f\|_1$$
and
$$\lambda|\big\{x\in \big(\mathcal O(\bigcup\limits_{j=1}^\infty
4\sqrt{N}Q_j)\big)^c: |Tb(x)|\ge  \lambda\big\}|\le
\int\limits_{\big(\mathcal O(\bigcup\limits_{j=1}^\infty
4\sqrt{N}Q_j)\big)^c} |Tb(x)|dx.$$
We estimate the last term as follows. For $x\in \big(\mathcal O(\bigcup\limits_{j=1}^\infty
4\sqrt{N}Q_j)\big)^c,$
$$Tb_j(x)=\int\limits_{\R^N} K(x,y)b_j(y)dy=\int\limits_{{\mathcal O}(Q_j)}[K(x,y)-K(x,x_{Q_j})]b_j(y)dy,$$
where the cancellation condition of $b_j$ is utilized.

Observe that if $x\in \big(\mathcal O(\bigcup\limits_{j=1}^\infty
4\sqrt{N}Q_j)\big)^c, y\in {\mathcal O}(Q_j)$ then
$d(x,y)\ge  2 d(y,x_{Q_j})$ and hence, applying the smoothness
condition of $K(x,y)$ gives
\begin{align*}
\int\limits_{\big(\mathcal O(\bigcup\limits_{j=1}^\infty
4\sqrt{N}Q_j)\big)^c}|Tb_j(x)|dx \lesssim \int\limits_{{\mathcal O}(Q_j)}
\int\limits_{d(x,y)\ge  2 d(y,x_{Q_j})}\frac{d(y,
x_{Q_j})^\varepsilon}{d(x,y)^{N+\varepsilon}} dx |b_j(y)|dy\lesssim
\lambda|Q_j|,
\end{align*}
where we apply the property(vi) and hence, by (iv),
$$\int\limits_{\big(\mathcal O(\bigcup\limits_{j=1}^\infty 4\sqrt{N}Q_j)\big)^c}|Tb(x)|dx\lesssim \sum\limits_{j=1}^\infty\lambda|Q_j|\lesssim \|f\|_1.$$

All these estimates, coupled with the $L_G^2$ boundedness of $T$ and (v), imply that
\begin{align*}
&\Big|\Big\{x: |Tf(x)|\ge  \lambda\Big\}\Big| \lesssim \Big|\Big\{x: |Tg(x)|\ge  {\lambda\over2}\Big\}\Big| + \Big|\Big\{x: |Tb(x)|\ge  {\lambda\over2}\Big\}\Big|\\
& \lesssim  \Big(\frac{\|g\|_2}{\lambda}\Big)^2 +\Big|\mathcal
O(\bigcup\limits_{j=1}^\infty 4\sqrt{N}Q_j)\Big| +\lambda^{-1}
\int\limits_{\big(\mathcal O(\bigcup\limits_{j=1}^\infty 4\sqrt{N}Q_j)\big)^c}|Tb(x)|dx
\lesssim \lambda^{-1}\|f\|_1,
\end{align*}
which implies that $T$ is of  weak type $(1,1)$ for $L_G^1(\R^N).$

By interpolation, we establish that $T$ is bounded on $L_G^p$ for $1<p\le 2$. The same line of reasoning is applicable to $T^*$, leading to its boundedness in $L_G^p$ for the same range of $p$, and subsequently, by duality, we infer the boundedness of $T$ in $L_G^p$ when $2\le p<\infty$.

We proceed to show the $L_G^\infty \to {\rm BMO}_G(\R^N)$ boundedness of $T$. In doing so, We first provide a rigorous definition of $Tf(x)$ for $f\in L^\infty_G.$ To this end, we follow the idea given in \cite{MC}. For any $f\in L^\infty_G(\R^N)$, we introduce a sequence of functions $\{f_j\}$, defined by $f_j(x) = f(x)$ if $|x|\le j$ and $f_j(x) = 0$ otherwise.

Since $f_j\in
L^2_G(\R^N),$  the $L^2_G$ boundedness of $T$ implies that $T(f_j)$ are well-defined $L^2_G$ functions. We assert the existence of a sequence $\{c_j\}_j$ of constants such that $T(f_j)-c_j$ uniformly converges on any compact subset of $\R^N$ to a function in ${\rm BMO}_G(\R^N)$. This function will be defined as $T(f)$, modulo the constant functions. Specifically, let
$c_j=\int\limits_{1\le d(0,y)\le j}K(0,y)f(y)dy.$
Observe that, due to the size condition of $K(x,y)$, we have
\begin{align*}
|c_j|\lesssim \int\limits_{1\le |y|\le j}\frac{1}{|y|^N}dy\lesssim j^N<\infty.
\end{align*}

To show the uniform convergence of $T(f_j)-c_j$ on the compact ball $B(0,R)=\{x: |x|<R \}=\{x: d(x,0)<R\},$ we partition $f_j$ into the sum of two functions, $g$ and $h_j$. Here, $g(x)=f(x)$ for $|x|\le 2R$, and $g(x)=0$ if $|x|>2R$. For $j>2R$, we have that for $|x|\le R$,
\begin{align*}
T(f_j)(x)&=T(g)(x)+T(h_j)(x)=T(g)(x)+\int\limits_{2R\le |y|\le
    j}K(x,y)f(y)dy\\
&= T(g)(x)+\int\limits_{2R\le |y|\le
    j}[K(x,y)-K(0,y)]f(y)dy+c_j-C(R),
\end{align*}
where $C(R)=
\int\limits_{1\le |y|\le 2R}K(0,y)f(y)dy.$ Observe that when $|x|\le R$, by the smoothness condition on the kernel $K(x,y)$, we obtain
\begin{align*}
\int\limits_{2R\le |y|}|K(x,y)-K(0,y)|\cdot|f(y)|dy&\le
\int\limits_{2d(x,0)\le d(y,0)}|K(x,y)-K(0,y)|\cdot|f(y)|dy\\
&\lesssim \int\limits_{2|x|\le
    |y|}\frac{|x|^\varepsilon}{|y|^{N+\varepsilon}}
dy\|f\|_{\infty}\lesssim \|f\|_{\infty}.
\end{align*}
Thus, the integral $\int\limits_{2R\le |y|\le
    j}|K(x,y)-K(0,y)|\cdot|f(y)|dy$ converges uniformly on $|x|\le R$ as $j$ tends to $\infty$, which implies that $T(f_j)-c_j$ converges uniformly on any compact set in $\R^N.$

Now, for $|x|<R,$ let us define $F^R(x)=T(g)(x)+\int\limits_{2R\le |y|}[K(x,y)-K(0,y)]f(y)dy-C(R).$ It is straightforward to verify that for any $0<R_1<R_2,$, when $|x|<R_1,$ we have $F^{R_2}(x)-F^{R_1}(x)=0.$
Hence, we may define $Tf(x)=F^R(x)$ for all $|x|\le R$, where $R>0$. We remark that the smoothness condition on the kernel $K(x,y)$ can be replaced by
$$\int\limits_{d(x,y)\ge 2d(x, x')} |K(x,y)-K(x',y)|dy\le C.$$

For any function $f\in L^\infty_G(\R^N)$, we can show that  $T(f)\in {\rm BMO}_G(\R^N)$ and moreover,
$\|T(f)\|_{{\rm BMO}}\lesssim \|f\|_{\infty}.$
%where $C$ is a constant independent of $f$.
To prove this, consider an arbitrary ball  $B=\{y: |x_0-y|\le R\}$ with center $x_0$ and radius $R$. Let $\mathcal O_{B_1}=\{y:
d(x_0,y)\le 2R\}.$ We decompose $f$ into two parts, $f=f_1+f_2$, where $f_1$ is the product of $f$ with the characteristic function of $\mathcal{O}_{B_1}$. It is evident that $f_1\in L^2_G(\R^N)$. Therefore, $T(f_1)$ is well-defined.
Note that $\mathcal O_B\subseteq B(0,R_1),$ where $R_1=|x_0|+R.$ Consequently, for any $x\in B,$
\begin{align*}
T(f_2)(x)&=T(f)(x)-T(f_1)(x)\\
&=T(f(\cdot)\chi_{\{|\cdot|\le 2R_1\}})(x)+\int\limits_{2R_1\le |y|}[K(x,y)-K(0,y)]f(y)dy-C(R_1)-T(f_1)(x)\\
&=\int\limits_{|y|\le 2R_1\atop d(x_0,y)>2R}K(x,y)f(y)dy+\int\limits_{2R_1\le |y|}[K(x,y)-K(0,y)]f(y)dy-C(R_1)\\
&=\int\limits_{|y|\le 2R_1\atop d(x_0,y)>2R}[K(x,y)-K(x_0,y)]f(y)dy+\int\limits_{2R_1\le |y|}[K(x,y)-K(0,y)]f(y)dy-C(R_1)+D,
\end{align*}
where $D=\int\limits_{|y|\le 2R_1\atop d(x_0,y)>2R}K(x_0,y)f(y)dy.$

Hence, due to the smoothness property of $K,$ for any $x\in B,$ we obtain
\begin{align*}
\int\limits_{|y|\le 2R_1\atop d(x_0,y)>2R}|K(x,y)-K(x_0,y)|\cdot|f(y)|dy&\le \int\limits_{d(x_0,y)\ge 2d(x_0,x)}|K(x,y)-K(x_0,y)|\cdot|f(y)|dy\\
&\lesssim \|f\|_\infty \int\limits_{d(x_0,y)\ge
2d(x_0, x)}\frac{d(x_0,x)^\varepsilon }{d(x_0,y)^{N+\varepsilon}}dy\lesssim \|f\|_\infty
\end{align*}
and
\begin{align*}
\int\limits_{2R_1\le |y|}|K(x,y)-K(0,y)||f(y)|dy&\le \int\limits_{|y|\ge 2|x|}|K(x,y)-K(0,y)|\cdot|f(y)|dy\\
&\lesssim \|f\|_\infty \int\limits_{|y|\ge 2|x|}\frac{|x|^\varepsilon }{|y|^{N+\varepsilon}}dy\lesssim \|f\|_\infty,
\end{align*}
which implies
\begin{align*}
\Big(\int\limits_{B}\Big|T(f_2)(x)-\frac{1}{|B|}\int\limits_{B} T(f_2)(y)dy\Big|^2
dx\Big)^{1/2}&\le C|B|^{1/2} \|f\|_\infty.
\end{align*}
Additionally, we have the inequality
$$\|T(f_1)\|_2\le \|T\|_{L^2_G\to L^2_G}\|f_1\|_2\lesssim \|f\|_\infty |B|^{1/2}.$$
Hence, we can derive
\begin{align*}
\Big(\int\limits_{B}\Big|T(f)(x)-\frac{1}{|B|}\int\limits_{B} T(f)(y)dy\Big|^2
dx\Big)^{1/2}
&\le
C|B|^{1/2}\|f\|_\infty,
\end{align*}
and thus the proof of Theorem \ref{th1.3} is concluded.
\end{proof}

Applying Theorems \ref{th1.1} and \ref{th1.3}, we finally present the
\begin{proof}[\bf Proof of Theorem \ref{th1.2}, the $T1$ Theorem]

We begin by establishing the necessity part of Theorem \ref{th1.2}.
Suppose that $T$ is a singular integral operator as defined in Definition \ref{def1.1}, which is bounded on the space $L_G^2(\R^N).$ We deduce from Theorem \ref{th1.3} that $T(1)\in {{\rm BMO}_G}(\R^N)$ and $T^*(1)\in {\rm BMO_G}(\R^N).$ To verify that $T\in WBP_G$, consider $f, g\in C_{G,0}^\eta(\R^N),\eta>0$ with $\supp (f)\subset \mathcal{O}_B(x_0, r)$ and $\supp (g)\subset \mathcal{O}_B(y_0, r)$ for $x_0, y_0\in \mathbb{R}^N$ and $r>0$, $\|f\|_\infty \leq 1$, $\|g\|_\infty \leq 1$ and the respective $C_{G,0}^\eta$ norms of $f$ and $g$ are bounded by $r^{-\eta}$. A straightforward computation yields $\|f\|_2+\|g\|_2\lesssim r^{N/2}$. This, together with the $L^2_G$ boundedness of $T$, implies
$$|\langle T (f), g\rangle|\le \|T(f)\|_2 \|g\|_2\lesssim \|f\|_2 \|g\|_2 \lesssim r^{N},$$
and hence $T\in WBP_G.$

The proof of the sufficiency part is crucial.
Our argument begins with the extension of $T$ to a continuous linear operator from $\Lambda_G^s\cap L_G^2(\R^N)$  into $(C_{G,0}^s(\R^N))^\prime$, where $\Lambda_G^s(\Bbb
R^N)$ represents the closure of $C_{G,0}^\eta(R^N)$ with respect to the norm $\|\cdot\|_{G,s}$, for $0 < s < \eta$. To proceed with precision, let us consider $g\in C_{G,0}^s(\R^N),$$0 < s< \eta < 1$,  with  supp~$g\subset \mathcal{O}_B(x_0, r)$. We select $\theta\in C_{G,0}^s(\R^N)$ such that $\theta(x) = 1$ for $d(x, x_0) \leq 2r$ and $\theta(x) = 0$ for $d(x, x_0) \geq 4r$. For any $f\in \Lambda_G^s \cap
L_G^2(\R^N),$ we can express
$$\langle Tf, g \rangle =\langle T(\theta f), g\rangle
+\langle  T((1-\theta) f), g \rangle. $$
The first term is well-defined since $\theta f\in C_{G,0}^s(\R^N).$ To see the second term is also well-defined, we use the size condition of $K(x,y)$ and the fact that $f\in L_G^2(\R^N)$ to write
$$\langle T((1-\theta) f), g \rangle
= \int_{\R^N} g(x)\Big(\int\limits_{\{y:d(x,y)>r\}} K(x,y)(1-\theta(y))f(y)dy\Big)\ dx.
$$
By H\"older's  inequality, we have
\begin{align*}
\int\limits_{\{y:d(x,y)>r\}} \bigg| K(x,y)(1-\theta(y))f(y)\bigg|dy
\lesssim \|f\|_{L^2_G(\R^N)}\Big( \sum\limits_{j=0}^\infty
2^{-jN}r^{-N}\Big)^{\frac12} <\infty.
\end{align*}
This implies that $\langle  T((1-\theta) f), g \rangle $ is indeed well-defined. Furthermore, this extension is independent of the particular choice of $\theta$.

Recall from Section~\ref{sec:2} that the identity operator $I$ on $L_G^2(\R^N)$ can be written as
$$I=\sum\limits_{k=-\infty}^\infty {D}_k=\sum\limits_{k=-\infty}^\infty\sum\limits_{j=-\infty}^\infty {D}_k{D}_j={T}_{M} +{R}_{M},$$
where ${T}_{M}=\sum\limits_{k\in \Bbb Z} D_kD_k^{M}$, with $D_k^M=\sum\limits_{\{j\in \Bbb Z:|j|\le M\}} D_{k+j}$, and
${R}_{M}=\sum\limits_{\{j,k\in \Bbb Z:\,
    |k-j|> {M}\}}{D}_k{D}_j.$
We know that, for a sufficiently large integer $M$, the inverse of $T_M$, denoted $T_M^{-1}$, is bounded on $L_G^2(\R^N)$. Furthermore, we have
$$I=T_M^{-1}T_M=T_M^{-1}\sum\limits_{k=-\infty}^\infty D_k^{M}D_k.$$
Consequently,
$$\|f\|_{L_G^2}\sim \|T_M(f)\|_{L_G^2}.$$

The following lemma describes the properties of operators ${T}_{M}$ when acting on $\Lambda_G^s.$

\begin{lemma}\label{le4.1}
    Suppose $0<s<\frac12 .$ Then
    {\bf (i)} ${T}_{M}=\sum\limits_{k=-\infty}^\infty D_kD^{M}_k$ converges in the norm of $\Lambda_G^s$ uniformly;
    {\bf (ii)} ${T}_{M}$ is bounded on $\Lambda_G^s$; {\bf (iii)} $\|{T}_{M}-I\|_{G,s}\to 0$ as $M\to +\infty$.
\end{lemma}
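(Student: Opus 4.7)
The plan is to exploit the Besov-type characterization of the H\"older norm from Proposition~\ref{pro3.1}, which gives $\|f\|_{G,s}\sim \sup_{k\in\Z} 2^{ks}\|D_kf\|_\infty$. This translates each of (i)--(iii) into a quantitative Littlewood--Paley estimate. Writing $T_M = I - R_M$ with $R_M = \sum_{|j-k|>M} D_k D_j$, part (iii) becomes
\[
\|R_M\|_{G,s\to G,s}\lesssim 2^{-M\delta}, \qquad \text{equivalently} \qquad \sup_{l\in\Z} 2^{ls}\|D_l R_M f\|_\infty\lesssim 2^{-M\delta}\|f\|_{G,s}
\]
for some $\delta>0$. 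Part (ii) follows immediately since $\|T_M\|_{G,s\to G,s}\le 1+\|R_M\|_{G,s\to G,s}$, and part (i) is the analogous estimate for the tails $\sum_{|k|>K} D_k D_k^M$ obtained by replacing the constraint $|j-k|>M$ with $|k|>K$ in the argument below.

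The analytic core will be a three-index almost-orthogonality bound of the shape
\[
\|D_l D_k D_j f\|_\infty \lesssim 2^{-|k-j|\delta_0}\cdot 2^{-|l-(k\wedge j)|\delta_0}\cdot 2^{-(l\wedge k\wedge j)s}\|f\|_{G,s}
\]
for a suitable $\delta_0>s$. I would derive this from Lemma~\ref{d-difference} in three steps: (a) invoke the cancellation of $D_l$ to write $D_l D_k D_j f(x)=\int D_l(x,z)\bigl[(D_k D_j f)(z)-(D_k D_j f)(x)\bigr]\,dz$; (b) extract a quantitative H\"older modulus for $D_k D_j f$ from the smoothness of the kernel $D_k D_j$, using the cancellation $\int D_k D_j(z,y)\,dy=0$ together with the bound $|f(y)-f(z)|\le d(y,z)^s\|f\|_{G,s}$; (c) distribute the almost-orthogonality factor $2^{-|k-j|\delta_0}$ into the smoothness by taking a geometric mean of the size and smoothness estimates of Lemma~\ref{d-difference}, exactly in the spirit of \eqref{(e3.2.12)}. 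The three exponential factors admit natural readings: almost-orthogonality between $D_k$ and $D_j$, the mismatch between the scale $2^{-(k\wedge j)}$ on which $D_k D_j f$ lives and the scale $2^{-l}$ selected by $D_l$, and the H\"older-regularity gain.

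Once this estimate is in place, the summation over $|j-k|>M$ splits naturally into the cases $l\le k\wedge j$ (where $2^{ls}\cdot 2^{-(l\wedge k\wedge j)s}=1$) and $l>k\wedge j$ (where $2^{ls}\cdot 2^{-(l\wedge k\wedge j)s}=2^{(l-(k\wedge j))s}$, absorbed by $2^{-|l-(k\wedge j)|\delta_0}$ as soon as $\delta_0>s$). The hypothesis $0<s<\tfrac{1}{2}$ leaves sufficient room inside the constraint $\delta_0<\varepsilon$ to choose both the geometric-mean exponents in step~(c) and $\delta_0$ itself so that all the elementary geometric series converge absolutely; after they are summed, what remains is a single sum over $|k-j|>M$ producing the decay $2^{-M\delta}$.

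The main obstacle will be step~(b): isolating a clean $\varepsilon'$-H\"older modulus for $D_k D_j f$ which already carries the almost-orthogonality factor $2^{-|k-j|\delta_0}$. This will rely on the double-difference bound \eqref{(e3.1.23)} together with the cancellation of $D_j$, and on a careful selection of the geometric-mean exponents so that the subsequent summations over the three indices $(l,k,j)$ converge. Once that intermediate inequality is in hand, the remaining bookkeeping, and the passage to parts (i) and (ii), is routine.
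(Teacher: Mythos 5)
Your route is genuinely different from the paper's. The paper never passes to the Besov side: it telescopes $R_M=\sum_{k}D_k(I-S_{k+M})+\sum_kD_kS_{k-M-1}$, proves pointwise $L^\infty$ and Lipschitz ($C^1_G$) bounds for each piece $H_k=D_kS_{k-M-1}$, interpolates ($\|H_kf\|_{G,\beta}\lesssim 2^{-M(1-2\beta)}2^{-(k-M)(s-\beta)}\|f\|_{G,s}$), and then estimates the H\"older quotient directly by splitting the sum over $k$ at the scale $d(x,x')$. That interpolation is precisely where $s<\tfrac12$ is used (one needs $s<\beta<\tfrac12$). Your frequency-side scheme --- reduce to $\sup_l 2^{ls}\|D_lR_Mf\|_\infty$ via Proposition~\ref{pro3.1} and sum a three-index almost-orthogonality bound --- is viable for (ii) and (iii), avoids the telescoping entirely, and in fact does not intrinsically need $s<\tfrac12$ (since the $D_k$ are Lipschitz, the geometric means can keep the H\"older exponent $\varepsilon'$ close to $1$, and all you need is $\varepsilon'>s$). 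Two caveats: in the regime $l<k\wedge j$ the natural bound only gives the middle factor with exponent $s$ rather than $\delta_0>s$ (which is still enough, because there the weight $2^{ls}2^{-(l\wedge k\wedge j)s}$ equals $1$ and any positive exponent sums); and to convert the Besov bound back into membership in $\Lambda_G^s$ --- the \emph{closure} of $C^\eta_{G,0}$ --- you still need a density/uniform-boundedness step, which the paper supplies by first working with $f\in C^\eta_{G,0}$, $\eta>s$.

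There is, however, a concrete gap in your treatment of part (i). The tail estimate you assert, namely $\sup_l 2^{ls}\bigl\|D_l\sum_{|k|>K}D_kD_k^Mf\bigr\|_\infty\lesssim 2^{-K\delta}\|f\|_{G,s}$, is false for a general $f\in C_G^s$: your own three-index bound gives, for the term $j=0$, a contribution $\sim 2^{-|l-k|(\delta_0-s)}$ after weighting, and taking $l$ close to $K+1$ shows the supremum over $l$ of $\sum_{|k|>K}2^{-|l-k|(\delta_0-s)}$ is bounded below by a constant independent of $K$. Unlike (iii), there is no factor $2^{-|k-j|\delta_0}$ with $|k-j|>M$ to produce decay uniformly in $l$. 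The convergence in (i) genuinely uses that $\Lambda_G^s$ is the closure of $C^\eta_{G,0}$ with $\eta>s$: for such $f$ one has $\|D_kD_k^Mf\|_\infty\lesssim 2^{-k\eta}\|f\|_{G,\eta}$ (and a compact-support bound for $k\to-\infty$), so the tail decays like $2^{-K(\eta-s)}$, and the general case follows from the uniform bound $\sup_K\|\sum_{|k|\le K}D_kD_k^M\|_{\Lambda_G^s\to\Lambda_G^s}<\infty$ together with density. You need to add this step; with it, the rest of your plan goes through.
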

We postpone the proof of this lemma to the Appendix.

We are now ready to present the verification of the sufficiency part of Theorem \ref{th1.2} under the assumptions that $T(1)(x)= T^*(1)(x)=0$ and $T\in WBP_G.$ Noticing that
${T}_{M}$ converges strongly on $L_G^2(\R^N)$ and applying the almost orthogonal estimates alongside the Cotlar--Stein Lemma, we deduce the following:
$$\sup\limits_{L_1,L_2}\Big\|\sum\limits_{k=L_1}^{L_2}
D_kD^{M}_k\Big\|_{L_G^2(\R^N)\to
L_G^2(\R^N)}<+\infty.$$
Hence, by virtue of Lemma \ref{le4.1}, ${T}_{M}$ converges strongly on $\Lambda_G^s\cap L_G^2(\R^N)$.

To show the sufficiency component of Theorem \ref{1.3}, we first establish that
$$\big|\langle g_0, Tf_0 \rangle\big|\lesssim \|g_0\|_{L_G^2(\R^N)}\|f_0\|_{L_G^2(\R^N)}$$
for any $g_0, f_0\in \Lambda_G^s\cap L_G^2(\R^N)$ with compact supports. Given  $f_0\in \Lambda_G^s\cap L_G^2(\R^N)$ with compact support, set
$f_1={T}_{M}^{-1}f_0\in \Lambda_G^s\cap L_G^2(\R^N) $  and
$$U_{L_1,L_2}
=\sum\limits_{k=L_1}^{L_2}D_kD^{M}_k.$$
It follows from Lemma \ref{le4.1} that
$\lim\limits_{\overset{L_1\to -\infty}{L_2\to +\infty}}
U_{L_1,L_2}f_1=f_0$ within the space $\Lambda_G^s\cap L_G^2(\R^N)$. Observe that $T$ can be extended to a continuous linear operator from $\Lambda_G^s \cap L_G^2(\R^N)$ into $(C^s_0(\R^N))^\prime.$
It follows that, for each $g_0\in \Lambda_G^s\cap L_G^2(\R^N)$ with compact support,
$$\langle g_0, Tf_0 \rangle =\lim\limits_{\overset{L_1\to -\infty}{L_2\to +\infty}}\langle g_0, TU_{L_1,L_2}f_1 \rangle.$$
Similarly, if we let $g_1={T}_{M}^{-1}g_0,$ then $g_1\in \Lambda_G^s\cap L_G^2(\R^N)$ and $U_{L'_1,L'_2}g_1$ converges to $g_0$ as $L'_1$ approaches negative infinity and $L'_2$ approaches positive infinity within the same space. Therefore,
$$\langle g_0, Tf_0 \rangle =\lim\limits_{\overset{L_1\to -\infty}{L_2\to +\infty}}
\lim\limits_{\overset{L'_1\to -\infty}{L'_2\to +\infty}}
\sum\limits_{k=L_1}^{L_2}\sum\limits_{k'=L'_1}^{L'_2}\Big\langle
D^{M}_{k'}g_1, D^*_{k'} T D_{k}D^{M}_{k}f_1\Big\rangle.$$
It is important to note that by Theorem \ref{th1.1}, $T D_{k},$ is a $G$-invariant H\"older function and hence $D^*_{k'} T D_{k}D^{M}_{k}f_1$ is a function in $L_G^2(\R^N).$ Estimating  $\sum\limits_{k=L_1}^{L_2}\sum\limits_{k'=L'_1}^{L'_2}\Big\langle
D^{M}_{k'}g_1, D^*_{k'} T D_{k}D^{M}_{k}f_1\Big\rangle
$ requires the subsequent lemma, whose proof we shall defer to the Appendix.

\begin{lemma}\label{le4.2}
    Let $T$ be a singular integral operator with a kernel $K$ defined in Definition \ref{def1.1} with $T(1)(x)=T^*(1)(x)=0$ and $T\in WBP_G$. Then
   $$\bigg|\int\limits_{\R^N}\int\limits_{\R^N} D_{k'}(x,u)K(u,v)D_k(v,y)
    dudv\bigg|\lesssim 2^{-|k-j|\varepsilon'}
    \frac{2^{(-j\vee -k)\varepsilon}}{\big(2^{(-j\vee -k)}+d(x,y)\big)^{N+\varepsilon}},$$
where $\varepsilon$ is the regularity exponent of $K$ and $a\vee b=\max\{a,b\}.$
\end{lemma}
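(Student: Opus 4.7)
We may assume $k\geq k'$ by symmetry (passing to $T^*$ if $k'>k$, which is permissible since the hypotheses on $T$ are self-adjoint). The target then reduces to $2^{-(k-k')\varepsilon'}\cdot 2^{-k'\varepsilon}/(2^{-k'}+d(x,y))^{N+\varepsilon}$. Write $\psi(u)=D_{k'}(x,u)$ and $\phi(v)=D_k(v,y)$; these are $G$-invariant mean-zero bumps at scales $2^{-k'}$ and $2^{-k}$ concentrated near $x$ and $y$ respectively, so the quantity to bound is $\langle\psi,T\phi\rangle$. The plan is to split into the regimes $d(x,y)>20\cdot 2^{-k'}$ (far) and $d(x,y)\leq 20\cdot 2^{-k'}$ (near).

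In the far regime, $\supp\psi$ and $\supp\phi$ are $G$-orbit disjoint, so the kernel integral converges absolutely. Using $\int\phi\,dv=0$ to replace $K(u,v)$ by $K(u,v)-K(u,y)$ and invoking the second-variable smoothness \eqref{1.3} (justified by $d(v,y)\lesssim 2^{-k}\ll d(u,y)\sim d(x,y)$), together with $\|\psi\|_1,\|\phi\|_1\lesssim 1$, gives the bound $2^{-k\varepsilon}/d(x,y)^{N+\varepsilon}$. Writing $2^{-k\varepsilon}=2^{-(k-k')\varepsilon}\cdot 2^{-k'\varepsilon}\leq 2^{-(k-k')\varepsilon'}\cdot 2^{-k'\varepsilon}$ (since $\varepsilon'<\varepsilon$) and noting that $d(x,y)\sim 2^{-k'}+d(x,y)$ closes this regime.

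In the near regime the target is $2^{-(k-k')\varepsilon'}\cdot 2^{k'N}$. Fix a $G$-invariant smooth cutoff $\chi$ equal to $1$ on $\supp\psi$ with $\supp\chi\subset\mathcal{O}_B(x,C\cdot 2^{-k'})$, and decompose
$$\langle\psi,T\phi\rangle=\psi(y)\langle\chi,T\phi\rangle+\langle\psi-\psi(y)\chi,T\phi\rangle.$$
Since $T^*(1)=0$ forces $\int T\phi\,du=0$, we have $\langle\chi,T\phi\rangle=-\langle 1-\chi,T\phi\rangle$; as $1-\chi$ is supported at distance $\gtrsim 2^{-k'}$ from $\supp\phi$, the cancellation of $\phi$ combined with kernel smoothness yields $|\langle\chi,T\phi\rangle|\lesssim 2^{-(k-k')\varepsilon}$, and $|\psi(y)|\lesssim 2^{k'N}$ handles this piece. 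For the second piece with $B:=\psi-\psi(y)\chi$, I would partition the $u$-integral at $d(u,y)\sim 2^{-k}$: on the outer region $d(u,y)>2^{-k+5}$, couple the pointwise bound $|T\phi(u)|\lesssim 2^{-k\varepsilon}/d(u,y)^{N+\varepsilon}$ (from $\int\phi=0$) with the H\"older-interpolated estimate $|B(u)|\lesssim 2^{k'N}\min(1,(d(u,y)/2^{-k'})^{\varepsilon'})$ on $\supp\psi$ (and $|B|\lesssim 2^{k'N}$ off $\supp\psi$ but inside $\supp\chi$) and integrate to extract the factor $2^{-(k-k')\varepsilon'}$; on the inner region $d(u,y)\leq 2^{-k+5}$, $B$ reduces to $\psi-\psi(y)$, which after normalization is a $G$-invariant $WBP_G$-admissible test function at scale $2^{-k}$ of size $\lesssim 2^{k'N}\cdot 2^{-(k-k')}$, so $WBP_G$ applied to it against the normalized $\phi$ returns $2^{k'N}\cdot 2^{-(k-k')}\leq 2^{k'N}\cdot 2^{-(k-k')\varepsilon'}$.

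The main obstacle is controlling the piece $\langle B,T\phi\rangle$ in the near-diagonal regime: $B$ possesses neither cancellation nor uniform smallness, so its estimate demands orchestrating two distinct mechanisms, namely the H\"older regularity of $\psi$ at the coarse scale $2^{-k'}$ (to gain decay near $y$) and $WBP_G$ at the fine scale $2^{-k}$ (to handle the local pairing). The exponent $\varepsilon'<\varepsilon$ emerges as the smaller of the kernel regularity exponent and the H\"older exponent one extracts from $\psi$ by interpolation between Lipschitz regularity and boundedness.
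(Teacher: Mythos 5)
Your proposal is correct and follows essentially the same route as the paper: both exploit $T^*(1)=0$ to subtract the constant $D_{k'}(x,y)$ (you do this via the cutoff $\chi$, the paper does it directly inside the double integral), then split the $u$-integral at scale $2^{-k}$ about $y$, applying $WBP_G$ to the near part and the kernel regularity together with the cancellation of $D_k(\cdot,y)$ to the far part. The only point to make precise is that the partition at $d(u,y)\sim 2^{-k}$ must be realized by a smooth cutoff (as the paper does with $\eta_0,\eta_1$) so that the near piece is an admissible H\"older test function for $WBP_G$; your interpolation of the regularity of $D_{k'}$ to the exponent $\varepsilon'$ on the outer region is a clean way to avoid the logarithm that appears in the paper's computation.
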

Applying Lemma \ref{le4.2} and the Cotlar--Stein Lemma, we obtain
$$\sum\limits_{k=L_1}^{L_2}\sum\limits_{k'=L'_1}^{L'_2}\Big|\Big\langle
D^{M}_{k'}g_1, D^*_{k'} T D_{k}D^{M}_{k}f_1\Big\rangle\Big|
\lesssim
\|f_1\|_{L_G^2(\R^N)}\|g_1\|_{L_G^2(\R^N)}\lesssim
\|f_0\|_{L_G^2(\R^N)}\|g_0\|_{L_G^2(\R^N)}$$
for all integers $L_1, L_2, L'_1$ and $L'_2.$
It follows that
$$|\langle g_0, Tf_0 \rangle| =\Bigg|\lim\limits_{\overset{L_1\to -\infty}{L_2\to +\infty}}
\lim\limits_{\overset{L'_1\to -\infty}{L'_2\to +\infty}}
\sum\limits_{k=L_1}^{L_2}\sum\limits_{k'=L'_1}^{L'_2}\Big\langle
D^{M}_{k'}g_1, D^*_{k'} T D_{k}D^{M}_{k}f_1\Big\rangle\Bigg|
\lesssim \|f_0\|_{L^2(\R^N)}\|g_0\|_{L^2(\R^N)}.$$

Recall that the space $L^2_G(\mathbb{R}^N)$ consists of all functions $f$ in $L^2(\mathbb{R}^N)$ satisfying $f(x)=f(\sigma(x))$ for all $\sigma \in G$. To finish the proof, we also need the following result, whose proof is postponed to the Appendix.

\begin{lemma} \label{le4.3}
 For $0<\eta\le 1,$  the space $C_{G,0}^\eta(\R^N)$ is dense in $L^2_G(\R^N).$
\end{lemma}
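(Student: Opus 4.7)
\textbf{Proof proposal for Lemma \ref{le4.3}.} The plan is to combine the classical density of $C_c^\infty(\mathbb R^N)$ in $L^2(\mathbb R^N)$ with a group-averaging (symmetrization) procedure, then verify that the symmetrized sequence lies in $C_{G,0}^\eta(\mathbb R^N)$ and still converges.

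Given $f\in L^2_G(\mathbb R^N)$, I would first select a sequence $\{f_n\}\subset C_c^\infty(\mathbb R^N)$ with $f_n\to f$ in $L^2(\mathbb R^N)$ (using the standard density theorem). The functions $f_n$ need not be $G$-invariant, so I symmetrize by setting
\[
f_n^G(x):=\frac{1}{|G|}\sum_{\sigma\in G}f_n(\sigma(x)).
\]
Reindexing the finite sum by $\sigma\mapsto\sigma\tau$ gives $f_n^G(\tau(x))=f_n^G(x)$ for every $\tau\in G$, so $f_n^G$ is $G$-invariant, and $\operatorname{supp}(f_n^G)\subset\bigcup_{\sigma\in G}\sigma^{-1}(\operatorname{supp}f_n)$ is compact.

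Next I would verify $f_n^G\in C^\eta_{G,0}(\mathbb R^N)$. Let $L_n$ denote the Lipschitz constant of the smooth compactly supported function $f_n^G$ with respect to the Euclidean distance. For any $\sigma\in G$, the $G$-invariance of $f_n^G$ and the isometric character of $\sigma$ give
\[
|f_n^G(x)-f_n^G(y)|=|f_n^G(x)-f_n^G(\sigma(y))|\le L_n|x-\sigma(y)|.
\]
Minimizing over $\sigma\in G$ yields $|f_n^G(x)-f_n^G(y)|\le L_n\,d(x,y)$. Combining this with the trivial bound $|f_n^G(x)-f_n^G(y)|\le 2\|f_n^G\|_\infty$ (applied when $d(x,y)>1$) gives $\|f_n^G\|_{G,\eta}\le L_n+2\|f_n^G\|_\infty<\infty$ for every $0<\eta\le 1$.

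Finally, since $f(\sigma(x))=f(x)$ a.e.\ and each $\sigma\in G$ is an orthogonal (hence Lebesgue-measure-preserving) linear map, one has $\|(f-f_n)\circ\sigma\|_2=\|f-f_n\|_2$. Therefore
\[
\|f-f_n^G\|_2=\bigg\|\frac{1}{|G|}\sum_{\sigma\in G}(f-f_n)\circ\sigma\bigg\|_2\le\frac{1}{|G|}\sum_{\sigma\in G}\|f-f_n\|_2=\|f-f_n\|_2\to 0,
\]
which proves the density. There is no real obstacle here; the only step that requires care is confirming the $G$-invariant H\"older seminorm $\|f_n^G\|_{G,\eta}$ (defined via $d(\cdot,\cdot)$, not $|\cdot-\cdot|$) is finite, which is exactly where the group averaging pays off, because $G$-invariance lets us replace $|x-y|$ by $\min_\sigma|x-\sigma(y)|=d(x,y)$ in the Lipschitz estimate.
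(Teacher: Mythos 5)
Your proof is correct, and it takes a genuinely different and more economical route than the paper's. The paper first reduces to a $G$-invariant continuous compactly supported approximant (quoting "classical techniques") and then regularizes it with a purpose-built $G$-invariant mollifier $\varphi_\varepsilon(x,y)=\varepsilon^{-N}c(x/\varepsilon)^{-1}\psi\big(d(x,y)/\varepsilon\big)$, which requires checking normalization ($\int\varphi_\varepsilon(x,y)\,dy=1$), the comparability $c(x/\varepsilon)\sim_\varepsilon 1$, and the Lipschitz continuity of $x\mapsto c(x/\varepsilon)$ before concluding that the mollification lies in $C^\eta_{G,0}$. You instead start from the standard density of $C_c^\infty$ in $L^2$ and symmetrize by averaging over the finite group; the $L^2$ error does not increase because each $\sigma\in G$ is orthogonal and $f$ is already $G$-invariant, and the membership $f_n^G\in C^\eta_{G,0}$ follows from the single observation that $G$-invariance plus a Euclidean Lipschitz bound self-improves to a Lipschitz bound in the orbit distance $d$ after minimizing over $\sigma$. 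That observation is in fact the same mechanism the paper exploits when it verifies $|h_\varepsilon(x)-h_\varepsilon(x')|\lesssim_\varepsilon d(x,x')$, but your argument isolates it cleanly and avoids the mollifier construction entirely. The only point worth making explicit (you do so implicitly) is that passing from the Lipschitz bound $L_n\,d(x,y)$ to finiteness of the $\eta$-seminorm for $\eta<1$ uses the trivial sup-norm bound on the region $d(x,y)>1$, which is fine since $f_n^G$ is bounded. Both proofs are valid; yours is shorter and arguably the more natural one given that $G$ is finite.
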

Applying Lemma \ref{le4.3} gives the boundedness of $T$ in $L_G^2(\R^N).$
The proof of Theorem \ref{1.2} is thus concluded under the assumptions that $T(1)(x)= T^*(1)(x)=0$ and $T\in WBP_G$. To remove the assumptions $T(1)(x)=0$ and $ T^*(1)(x)=0,$ we need to introduce the following paraproduct operator.

\begin{definition}
Let $b\in BMO_G.$ The paraproduct operator associated with $b$ is defined by
$$\Pi_b(f)(x)=\sum_{k\in \Bbb Z} {\widetilde D}_k\{D_k(b)(\cdot)S_k(f)(\cdot)\}(x),$$
where $S_k, D_k$ and ${\widetilde D}_k$ are the same as in Theorem \ref{th2.2}.
\end{definition}

The properties of the paraproduct operator are given by the following lemma, whose proof is provided in the Appendix.
\begin{lemma} \label{le4.4}
The paraproduct operator $\Pi_b$ satisfies the following properties:
\begin{eqnarray*}
    &\textup{\bf (i)}& \Pi_b {\rm\ is\ a\ sigular \ integral\ operator \ defined\ in\ Definition~\ref{def1.1}\ and\ is \ bounded\ on }\ L_G^2(\R^N);\\
    &\textup{\bf (ii)}& \Pi_b(1)(x)=b(x);\\
    &\textup{\bf (iii)}& (\Pi_b)^*(1)=0.
    \end{eqnarray*}
\end{lemma}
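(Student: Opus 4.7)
My plan is to dispose of (iii) and (ii) first, since both follow almost immediately from the properties of $\widetilde D_k$ and $S_k$ established in Section~\ref{sec:2}, and then to devote the main effort to (i), whose $L^2_G$ boundedness is the principal difficulty.

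For (iii), I would unfold the definition and apply Fubini:
$$\int_{\R^N}\Pi_b(f)(x)\,dx=\sum_{k\in\Z}\int_{\R^N}\!\!\int_{\R^N}\widetilde D_k(x,y)D_k(b)(y)S_k(f)(y)\,dy\,dx.$$
The cancellation $\int_{\R^N}\widetilde D_k(x,y)\,dx=0$ from Theorem~\ref{th2.2} annihilates each summand, giving $(\Pi_b)^*(1)=0$. For (ii), property \textbf{(v)} of the approximation to the identity gives $S_k(1)\equiv 1$, so $\Pi_b(1)(x)=\sum_k\widetilde D_kD_k(b)(x)$, and Calder\'on's reproducing formula of Theorem~\ref{th2.2} applied to $b$ (interpreted in the $\M(\beta',\gamma')^\prime$-sense used in Section~\ref{sec:2}) yields $\Pi_b(1)=b$.

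Turning to (i), the kernel of $\Pi_b$ is
$$K_{\Pi_b}(x,y)=\sum_{k\in\Z}\int_{\R^N}\widetilde D_k(x,z)D_k(b)(z)S_k(z,y)\,dz.$$
A preliminary step is the pointwise bound $|D_k(b)(z)|\lesssim \|b\|_{\mathrm{BMO}_G}$, which I would derive by using $\int D_k(z,\cdot)=0$ to subtract the local average $b_{\mathcal O_B(z,2^{-k})}$, then summing over dyadic shells; the $\varepsilon$-decay from Lemma~\ref{d-difference} absorbs the logarithmic growth coming from the $\mathrm{BMO}_G$ oscillation estimate of $b$. With this in hand, the size estimate \eqref{1.1} on $K_{\Pi_b}$ follows by combining the size bounds of $\widetilde D_k$ and $S_k$ to produce decay of the form $2^{-k\varepsilon'}(2^{-k}+d(x,y))^{-N-\varepsilon'}$ at each scale $k$ and summing the geometric series. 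The smoothness conditions \eqref{1.2} and \eqref{1.3} are handled the same way, extracting regularity from $\widetilde D_k$ in the $x$-variable and from $S_k$ in the $y$-variable.

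The main obstacle is the $L^2_G$ boundedness. My plan is a duality argument: for $f,g\in L^2_G(\R^N)$,
$$|\langle \Pi_b f,g\rangle|\le \int_{\R^N}\Bigl(\sum_{k\in\Z}|D_k(b)(y)|^2|S_k(f)(y)|^2\Bigr)^{1/2}\Bigl(\sum_{k\in\Z}|\widetilde D_k^{\,*}(g)(y)|^2\Bigr)^{1/2}dy.$$
The second factor has $L^2_G$-norm $\lesssim \|g\|_{L^2_G}$ by a Littlewood-Paley square function estimate for $\{\widetilde D_k^{\,*}\}$, which follows from the almost-orthogonality of Lemma~\ref{d-difference} combined with the Cotlar-Stein lemma. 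For the first factor I would invoke the Carleson-measure characterization of $\mathrm{BMO}_G$: the family $\{|D_k(b)(y)|^2\,dy\}_{k\in\Z}$ is a discrete Carleson measure of mass $\lesssim \|b\|_{\mathrm{BMO}_G}^2$, while $|S_k(f)(y)|\lesssim Mf(y)$ follows from the size/support properties of $S_k$, so Carleson's embedding gives the first factor in $L^2_G$ with norm $\lesssim \|b\|_{\mathrm{BMO}_G}\|Mf\|_{L^2_G}\lesssim \|b\|_{\mathrm{BMO}_G}\|f\|_{L^2_G}$. The delicate point is to verify the Carleson condition in the $G$-orbit geometry, but since $\|\cdot\|_{\mathrm{BMO}_G}$ coincides with the classical BMO norm on $G$-invariant functions, the standard John-Nirenberg/Carleson machinery transplants to the present setting with only cosmetic changes.
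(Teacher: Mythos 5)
Your proposal is correct and follows essentially the same route as the paper: handling (ii) and (iii) via the cancellation of $\widetilde D_k$, $S_k(1)=1$, and the reproducing formula, and proving (i) by Cauchy--Schwarz against the $\widetilde D_k$ square function together with the Carleson-measure property of $\{|D_k(b)(y)|^2\,dy\}$ and the bound $|S_kf|\lesssim Mf$. The only difference is that you cite the Carleson embedding theorem as a black box, whereas the paper proves it in-line via the distribution function and a Whitney decomposition of the level sets of the nontangential maximal function, using the $G$-invariance of $b$ to reduce orbit-ball oscillation integrals to Euclidean ones.
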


We now define a modified operator ${\widetilde T}(f)$ by setting ${\widetilde T}(f)=T(f)-\Pi_{T1}(f)-(\Pi_{T^*1})^*(f)$. As in the classical case, by invoking Lemma \ref{le4.4}, we establish that ${\widetilde T}$ is a singular integral operator defined in Definition \ref{def1.1}, and it fulfills the conditions ${\widetilde T}(1)={\widetilde T}^*(1)=0$ and ${\widetilde T}\in WBP_G.$ Consequently, ${\widetilde T}$ is bounded on $L_G^2(\R^N)$. This, together with part {\bf(i)} of Lemma \ref{le4.4}, implies the boundedness of the original operator $T$ on $L_G^2(\R^N).$
The proof of Theorem \ref{th1.2} is now concluded.
\end{proof}

\section{Appendix}

To prove Lemma \ref{le4.1}, we need the following estimates for
$D_k$ and $D_k^{M}$.

\begin{lemma}\label{Lemma 3.1}   \sl
    Let $0<s<1$. Then {\bf (i)} $\|D_kf\|_{L_G^\infty} \lesssim 2^{-ks}\|f\|_{G,s}$; {\bf (ii)} $\|D_kf\|_{G,s} \lesssim 2^{ks}\|f\|_{L_G^\infty}$;
    {\bf (iii)}  $\|D_kf\|_{G,\beta}\lesssim 2^{k(\beta-s)}\|f\|_{G,s}$
    if $0<s\le\beta< 1$;
    {\bf (iv)} $\|D^{M}_kf\|_{G,s}\lesssim M\|f\|_{G,s}$.

\end{lemma}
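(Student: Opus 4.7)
The four estimates are all of a type standard in Calderón–Zygmund theory; the only thing to be careful about is that everything must be phrased in terms of the orbit distance $d(x,y)$ and the orbit balls $\mathcal{O}_B(x,r)$. Throughout I will use the kernel estimates \textbf{(i)}--\textbf{(v)} for $S_k$ listed just after \eqref{n2.6}, which yield for $D_k=S_k-S_{k-1}$ the size bound $|D_k(x,y)|\lesssim 2^{kN}\chi_{\{d(x,y)\le 2^{2-k}\}}$, the Lipschitz bound $|D_k(x,y)-D_k(x',y)|\lesssim 2^{k(N+1)}d(x,x')$ (with a companion bound in $y$), and the cancellation $\int D_k(x,y)\,dy=\int D_k(x,y)\,dx=0$. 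The fact that $|\mathcal{O}_B(x,r)|\sim r^N$ will be used without further comment.

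For \textbf{(i)}, I would exploit the cancellation of $D_k$ to write
$$D_kf(x)=\int_{\R^N} D_k(x,y)[f(y)-f(x)]\,dy,$$
and then estimate
$$|D_kf(x)|\lesssim 2^{kN}\|f\|_{G,s}\int_{d(x,y)\le 2^{2-k}} d(x,y)^s\,dy\lesssim 2^{-ks}\|f\|_{G,s},$$
using a layer-cake decomposition of the support.

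For \textbf{(ii)}, I would split into two regimes. If $d(x,x')\ge 2^{-k}$, the estimate $|D_kf(x)|\lesssim\|f\|_\infty$ (from $\|D_k(x,\cdot)\|_1\lesssim 1$) gives $|D_kf(x)-D_kf(x')|\lesssim\|f\|_\infty\le 2^{ks}d(x,x')^s\|f\|_\infty$. If $d(x,x')<2^{-k}$, apply the Lipschitz property of $D_k$ in the first variable to get $|D_kf(x)-D_kf(x')|\lesssim 2^k d(x,x')\|f\|_\infty$, and then bound $2^kd(x,x')^{1-s}\le 2^{ks}$. Part \textbf{(iii)} combines the techniques of \textbf{(i)} and \textbf{(ii)}: in the regime $d(x,x')\ge 2^{-k}$ control $|D_kf(x)-D_kf(x')|$ by $2\cdot 2^{-ks}\|f\|_{G,s}$ from \textbf{(i)} and then divide by $d(x,x')^\beta\ge 2^{-k\beta}$; in the regime $d(x,x')<2^{-k}$ use cancellation together with the Lipschitz property of $D_k$ to write
$$D_kf(x)-D_kf(x')=\int [D_k(x,y)-D_k(x',y)][f(y)-f(x)]\,dy,$$
so that $|D_kf(x)-D_kf(x')|\lesssim 2^{k(1-s)}d(x,x')\|f\|_{G,s}$, and then bound $d(x,x')^{1-\beta}\le 2^{-k(1-\beta)}$.

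Finally \textbf{(iv)} is purely a summation. Since $D^M_k=\sum_{|j|\le M} D_{k+j}$, applying \textbf{(iii)} with $\beta=s$ to each summand yields $\|D_{k+j}f\|_{G,s}\lesssim\|f\|_{G,s}$ uniformly in $j$, and summing over $|j|\le M$ produces the factor $M$. The $G$-invariance of $D_kf$ and $D^M_kf$ (needed to interpret the norms $\|\cdot\|_{G,s}$) is automatic from the $G$-invariance of the kernels of $S_k$. No step presents a real conceptual obstacle; the main care is bookkeeping in \textbf{(iii)} to ensure the powers of $2^k$ match up correctly, and verifying that the quasi-triangle inequality for $d$ suffices to reduce the support of $D_k(x',\cdot)$ to an $\mathcal{O}_B(x,C\cdot 2^{-k})$ when $d(x,x')<2^{-k}$.
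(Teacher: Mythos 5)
Your proof is correct and follows essentially the same route as the paper: cancellation of $D_k$ plus the size bound for (i), the kernel Lipschitz estimate for (ii), the combination of both (with the split at $d(x,x')\sim 2^{-k}$) for (iii), and summation over $|j|\le M$ for (iv). The only cosmetic difference is that you carry out the interpolation between the Lipschitz and size bounds explicitly via the two regimes, whereas the paper writes the fractional bound $(2^k d(x,y))^s$ directly in the integrand.
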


\begin{proof}
    For {\bf (i)}, the cancellations of $D_k$ gives
    $$D_kf(x)=
    \int\limits_{\R^N}D_k(x,y)[f(y)-f(x)]dy.$$
    Since $D_k(x,y)=0$ for $d(x, y)\ge 2^{3-k}$,
    the size condition of $D_k$ and the smoothness condition of $f$ yield
    \begin{align*}
    |D_kf(x)|&\lesssim \|f\|_{G,s}\int\limits_{d(x, y)\le 2^{3-k}}2^{kN}d(x, y)^sdy
    \lesssim 2^{-ks}\|f\|_{G,s}.
    \end{align*}
    For {\bf (ii)}, the smoothness condition of $D_k$ gives
    \begin{align*}
    |D_kf(x)-D_kf(y)|
    &=\bigg|\int\limits_{\R^N} (D_k(x,z)-D_k(y,z))f(z)dz\bigg|\\
    &\le \|f\|_{L_G^\infty} \bigg(\int\limits_{d(x, z)\le 2^{3-k}}+\int\limits_{d(y, z)\le 2^{3-k}}\bigg)(2^kd(x, y))^s 2^{kN}dz,
    \end{align*}
    which implies $\|D_kf\|_{G,s} \lesssim 2^{ks}\|f\|_{L_G^\infty}.$

    To estimate {\bf (iii)},  if  $d(x, y)\le 2^{-k},$ using
    the the cancellations of $D_k$ and the smoothness condition of $f,$ we get
    \begin{align*}
    |D_kf(x)-D_kf(y)|
    %  &=\bigg|\int_{\R^N} [D_k(x,z)-D_k(y,z)]f(z)dz\bigg|\\
    &=\bigg|\int\limits_{\R^N} [D_k(x,z)-D_k(y,z)][f(z)-f(x)]dz\bigg|\\
    &\lesssim  \bigg(\int\limits_{d(x, z)\le 2^{3-k}}+\int\limits_{d(y, z)\le 2^{3-k}}\bigg)(2^kd(x, y))^\beta 2^{kN}d(x, z)^s\|f\|_{G,s}dz\\
    &\lesssim d(x, y)^\beta 2^{k(\beta-s)}\|f\|_{G,s}.
    \end{align*}

    When $d(x, y)> 2^{-k}$, {\bf (i)} gives $|D_kf(x)-D_kf(y)|\lesssim 2^{-ks}\|f\|_{G,s}
    \lesssim d(x, y)^\beta 2^{k(\beta-s)}\|f\|_{G,s}.$

The assertion {\bf (iii)} is a consequence of these estimates, while {\bf (iv)} follows from {\bf (iii)} with $\beta = s.$
%The appraisal for {\bf (iv)} is inferred from {\bf (iii)}, given that $\beta = s$.
This concludes the proof.
\end{proof}

We are now ready to give the proof of Lemma \ref{le4.1}.
\begin{proof}[Proof of Lemma \ref{le4.1}]
    We first show that $T_M(f)(x)=\sum\limits_{k=-\infty}^\infty D_kD^{M}_k(f)(x)$ is
    well defined on $\Lambda^s_G.$ To this end, let $f\in C^\eta_{G,0}$
    with $\eta>s$ and set $G_kf(x):=D_kD^{M}_kf(x)$. Observe that if $f\in C^\eta_{G,0}$
    then $f\in L_G^\infty$ and hence, $\|D^M_k(f)\|_{L_G^1(\R^N)}\le CM\|f\|_{L_G^\infty}.$
    By {\bf (iv)}, $\|D^M_k(f)\|_{G,\eta}\lesssim M\|f\|_{G,\eta}.$  Therefore
    \begin{align*}
    |G_k(f)(x)|&=\bigg|\int\limits_{\R^N} D_k(x,y)D^M_k(f)(y)dy\bigg|\lesssim 2^{kM}\|f\|_{L_G^\infty},
    \end{align*}
    and
    \begin{align*}
    |G_k(f)(x)|&=\Big|\int\limits_{\R^N} D_k(x,y)D^M_k(f)(y)dy\Big|=\Big|\int\limits_{\R^N} D_k(x,y)[D^M_k(f)(y)-D^M_k(f)(x)]dy\Big|\\
    &\lesssim \|f\|_{G,\eta}\int\limits_{\R^N} |D_k(x,y)|d(x, y)^\eta dy\lesssim 2^{-k\eta}\|f\|_{G,\eta}.
    \end{align*}
    These two estimates imply that if $f\in C^\eta_{G,0}$ then the series $\sum\limits_{k=-\infty}^\infty D_kD^{M}_k(f)(x)$ converges uniformly. Moreover, for given
    $x,y\in \R^N$, choose $k_0\in \Bbb Z$ such that
    $2^{-k_0}\le d(x, y)\le 2^{-k_0+1}$. Then by using
    Lemma \ref{Lemma 3.1}  and by splitting the sum over $k$ into the sums over $k\ge k_0$ and over $k\le k_0$, we obtain that
    \begin{equation}\label{eq 2.5}
    \bigg|\sum\limits_{k=-\infty}^\infty [G_kf(x)-G_kf(y)]\bigg|
    \lesssim 2^{-k_0s}\|f\|_{G,s}+2^{k_0(\beta-s)}d(x, y)^\beta\|f\|_{G,s}
    \lesssim d(x, y)^s\|f\|_{G,s}.
    \end{equation}
    Hence if $f\in C^\eta_{G,0}$ with $\eta>s$,
    then the series $\sum\limits_kD_kD^{M}_kf$ converges in $\Lambda_G^s$ norm. Observe that $C^\eta_{G,0}$ with $\eta > s$ is dense in $\Lambda_G^s.$ This implies that $T_M$ extends to $\Lambda_G^s.$ Indeed, if $f\in \Lambda_G^s,$ then there exists a sequence $f_n\in C^\eta_{G,0}, \eta>s,$ such that  $\|f_n-f\|_{G,s}$ tends to zero as $n$ tends to $\infty.$ Let $T_M(f)(x)=\lim\limits_{n\to \infty}T_M(f_n)(x).$ Then $T_M$ is bounded on $\Lambda_G^s$ and moreover, $\|T_M(f)\|_{G,s}\lesssim \|f\|_{G,s}$ for $f\in \Lambda_G^s.$

    To show that $\|{T}_{M}-I\|_{G,s}\to 0$ as $M\to +\infty,$ it suffices to prove that ${R}_{M} \to 0$ as $M\to +\infty$ with respect to the norm in $C^s_{G}$. To this end, we rewrite
    \begin{align*}
    {R}_{M}f
    &=\sum\limits_{k=-\infty}^\infty\sum\limits_{\{j\in \Bbb Z:\, |k-j|> {M}\}}{D}_k{D}_j
    =\sum\limits_{\{\ell \in \Bbb Z:\,|\ell|> M\}}\sum\limits_{k=-\infty}^\infty D_kD_{k+\ell} f \\
    &=\sum\limits_{k=-\infty}^\infty D_k(I-S_{k+M})f
    +\sum\limits_{k=-\infty}^\infty D_kS_{k-M-1}f.
    \end{align*}
    Since $\int\limits_{\R^N} S_k(x,y)dy=1$ for $k\in \Bbb Z$, we have
    \begin{align*}
    |(I-S_{k+M})f(x)|
    &%=|f(x)-S_{k+M}f(x)|
    =\bigg|\int\limits_{\R^N} S_{k+M}(x,y)[f(x)-f(y)]dy\bigg|
    \lesssim 2^{-(M+k)s}\|f\|_{G,s}
    \end{align*}
    and hence $\|(I-S_{k+M})f\|_{L_G^\infty}\lesssim 2^{-(M+k)s}\|f\|_{G,s}.$
    This estimate, together with Lemma \ref{Lemma 3.1} and an application  of the same proof for \eqref{eq 2.5} imply that
    $\big\|\sum\limits_{k=-\infty}^\infty D_k(I-S_{k+M})f\big\|_{G,s}\le 2^{-Ms}\|f\|_{G,s},$ which gives $\big\|\sum\limits_{k=-\infty}^\infty D_k(I-S_{k+M})\big\|_{\Lambda_G^s\mapsto
        \Lambda_G^s}\to 0$ as $M\to +\infty$.

    To estimate $\sum\limits_{k=-\infty}^\infty D_kS_{k-M-1}f,$ the second term of $R_Mf,$    let $H_kf=D_kS_{k-M-1}f$ and denote by $H_k(x,y)$ the kernel of $H_k$.  Then $\int\limits_{\Bbb R^N} H_k(x,y)dy=D_kS_{k-M-1}(1)=D_k(1)=0$
    and $H_k(x,y)=0$ if $d(x, y)\ge 2^{4-(k-M)}$.
    By the cancellation of $D_k$ and the smoothness of $S_{k-M-1}$,
    \begin{align*}
    |H_k(x,y)|
    &=\bigg|\int\limits_{\R^N}
    D_k(x,z)\big[S_{k-M-1}(z,y)-S_{k-M-1}(x,y)\big]dz\bigg|\\
    &\lesssim \int\limits_{B_d(x,2^{3-k})}2^{kN}\cdot 2^{k-M-1}d(x, z) \cdot 2^{(k-M-1)N}
    dz\lesssim 2^{-M}2^{(k-M-1)N}.
    \end{align*}
    Thus, for $f\in \Lambda_G^s,$
    \begin{align*}
    &|H_kf(x)|
    =\bigg|\int\limits_{\R^N}
    H_k(x,y)[f(y)-f(x)]dy\bigg|\\
    &\lesssim \int\limits_{d(x, y)\le 2^{4-(k-M)}}2^{-M}2^{(k-M-1)N}d(x, y)^s\|f\|_{G,s}dy\lesssim 2^{-M}2^{-(k-M)s}\|f\|_{G,s}.
    \end{align*}
    This implies that
    \begin{equation}\label{eq 2.6}
    \|H_kf\|_{ L_G^\infty}\lesssim 2^{-M}2^{-(k-M)s}\|f\|_{G,s}.
    \end{equation}
    If $d(x, x')\le 2^{4-(k-M)}$, then
    \begin{equation}\label{eq 2.7}
    \begin{aligned}
|H_k(x,y)-H_k(x',y)|
    &= \bigg|\int\limits_{\R^N} \big[D_k(x,z)-D_k(x',z)\big]S_{k-M-1}(z,y)dz\bigg| \\
    &\lesssim \int\limits_{\{d(x, z)\le 2^{2-k}\atop \text{or}\ d(x', z)\le 2^{2-k}\}}2^kd(x, x')2^{kN}2^{(k-M-1)N}dz \\
    &\lesssim 2^kd(x, x')2^{(k-M-1)N}.
    \end{aligned}
    \end{equation}
    For $d(x, y)\le 2^{4-(k-M)}$, applying \eqref{eq 2.7} yields
    \begin{align*}
    |H_kf(x)-H_kf(y)|
    %  &=\bigg|\int_{\R^N}[H_k(x,z)-H_k(y,z)]f(z)dz \bigg|\\
    &=\bigg|\int\limits_{\R^N}[H_k(x,z)-H_k(y,z)][f(z)-f(x)]dz \bigg|\\
    &\lesssim \int\limits_{\substack{d(x, z)\le 2^{4-(k-M)}\atop \text{ or}\ d(y, z)\le 2^{4-(k-M)}}}2^kd(x, y)2^{(k-M-1)N}d(x, z)^s\|f\|_{G,s} dz\\
    &\lesssim 2^k2^{-(k-M)s}d(x, y)\|f\|_{G,s}.
    \end{align*}
    For $d(x, y)> 2^{4-(k-M)}$, the estimate \eqref{eq 2.6} implies
    $$
    |H_kf(x)-H_kf(y)|\lesssim 2^{-M}2^{-(k-M)s}\|f\|_{G,s}
    \lesssim 2^k2^{-(k-M)s}d(x, y)\|f\|_{G,s}.
    $$
    These estimates imply that $H_k(f)(x)\in C_G^1$ with the norm bounded by
    \begin{equation}\label{eq 2.8}
    \|H_kf\|_{G,1}  \lesssim 2^k2^{-(k-M)s}\|f\|_{G,s}.
    \end{equation}
    Using the fact that $\|f\|_{G,\beta}\le \|f\|_{G,\infty}^{1-\beta}\|f\|_{G,1}^\beta, 0<\beta<1,$ the estimates \eqref{eq 2.6} and \eqref{eq 2.8} yield
    \begin{equation}\label{eq 2.9}
    \|H_kf\|_{G,\beta} \lesssim 2^{-M(1-2\beta)}2^{-(k-M)(s-\beta)}\|f\|_{G,s}.
    \end{equation}
    Given
    $x,y\in \R^N$, choose $k_1$ such that
    $2^{-k_1}\le d(x, y)\le 2^{-k_1+1}$.
    The estimates in \eqref{eq 2.6} and \eqref{eq 2.9} imply that, for $s<\beta,$
    \begin{align*}
    &\bigg|\sum\limits_{k=-\infty}^\infty [H_kf(x)-H_kf(y)]\bigg|\\
    &\le \sum\limits_{\{k:k\ge k_1\}} |H_kf(x)-H_kf(y)|
    +\sum\limits_{\{k:k< k_1\}}|H_kf(x)-H_kf(y)| \\
    &\lesssim  \sum\limits_{\{k:k\ge k_1\}}2\|H_kf\|_{ L_G^\infty}
    +\sum\limits_{\{k:k< k_1\}}d(x, y)^\beta \|H_kf\|_{G,\beta} \\
    &\lesssim  \sum\limits_{\{k:k\ge k_1\}} 2^{-M}2^{-(k-M)s}\|f\|_{G,s}
    + \sum\limits_{\{k:k< k_1\}}d(x, y)^\beta 2^{-M(1-2\beta)}2^{-(k-M)(s-\beta)}\|f\|_{G,s} \\
    &\lesssim 2^{-k_1s}2^{-M+Ms}\|f\|_{G,s}+ 2^{-M(1-2\beta)}2^{M(s-\beta)}2^{k_1(\beta-s)}d(x, y)^\beta\|f\|_{G,s}\\
    &\lesssim  2^{-M(1-2\beta)}\Big(2^{M(s-2\beta)}+2^{M(s-\beta)}\Big)d(x, y)^s\|f\|_{G,s}\\
    &\lesssim 2^{-M(1-2\beta)}d(x, y)^s\|f\|_{G,s}.
    \end{align*}
    Therefore, we have
    $$\Big\|\sum\limits_{k=-\infty}^\infty H_k f\Big\|_{G,s}\lesssim 2^{-M(1-2\beta)}\|f\|_{G,s}\qquad \text{for}\quad s<\beta<1.$$
    If $s<\frac12 $, we can choose $\beta$
    so that $2^{-M(1-2\beta)}\to 0$ as $M\to +\infty.$ The proof of Lemma \ref{le4.1} is concluded.
\end{proof}

Next, we present the
\begin{proof}[\bf {Proof of Lemma \ref{le4.2}}]

    We prove Lemma \ref{le4.2} in the crucial case when $k'\le k$ and $d(x,y)\le C2^{-k'}.$ The remaining case $d(x,y)>C2^{-k'}$ is similar but easier. Let $\eta_0\in C^\infty(\R^N)$ be 1 on the unit ball and 0 outside its double. Set $\eta_1=1-\eta_0.$ Then we have
    \begin{align*}
    \int\limits_{\R^N}&\int\limits_{\R^N} D_{k'}(x,u)K(u,v)D_k(v,y)
    dudv\\
    &=\int\limits_{\R^N} \big[D_{k'}(x,u)-D_{k'}(x,y)\big]K(u,v)D_k(v,y)
    dudv\\
    &=\int\limits_{\R^N}\int\limits_{\R^N}\eta_0(\frac{d(u,y)}{2^{-k}}) \big[D_{k'}(x,u)-D_{k'}(x,y)\big]K(u,v)D_k(v,y)
    dudv\\
    &\quad +\int\limits_{\R^N}\int\limits_{\R^N}\eta_1(\frac{d(u,y)}{2^{-k}}) \big[D_{k'}(x,u)-D_{k'}(x,y)\big][K(u,v)-K(u,y)]D_k(v,y)
    dudv\\
    &=:A+B.
    \end{align*}

Applying the weak boundedness property (Definition \ref{def1.2}) with  $f(v)=2^{-kN}D_k(v,y)$
and $g(u)=\big (\frac{2^{k}}{2^{k'}}\big)2^{-k'N}\eta_0(\frac{d(u,y)}{2^{-k}}) \big[D_{k'}(x,u)-D_{k'}(x,y)\big] $, we obtain
$$|A|= \left|\left\langle \big (\frac{2^{-k}}{2^{-k'}}\big)2^{k'N} g, T(2^{kN} f)\right\rangle \right|\lesssim 2^{-kN} \cdot \big (\frac{2^{-k}}{2^{-k'}}\big)2^{k'N}\cdot 2^{kN}\sim  2^{-|k-k'|}
    \frac{2^{(-k'\vee -k)}}{\big(2^{(-k'\vee -k)}+d(x,y)\big)^{N+1}},$$
    where the last inequality follows from the facts that $k'\le k$ and $d(x,y)\le C2^{-k'}.$

To estimate $B$, using the smoothness of $K(x,y)$ together with the estimate
$$|D_{k'}(x,u)-D_{k'}(x,y)|\lesssim
\frac{d(u,y)}{2^{-k'}+d(u,y)}2^{k'N},$$
we obtain that
\begin{align*}
|B|&\lesssim \int\int\limits_{d(u,y)\ge2^{-k}}\frac{d(u,y)}{2^{-k'}+d(u,y)} 2^{k'N}\frac{2^{-k}}{d(u,y)^{N+1}} |D_k(v,y)|dudv\\
    &\lesssim  2^{-k}2^{k'N}\int\limits_{d(u,y)\ge2^{-k'}}d(u,y)^{-N-1}du +\frac{2^{-k}}{2^{-k'}}2^{k'N}\int\limits_{2^{-k'}\ge d(u,y)\ge2^{-k}}d(u,y)^{-N}du\\
    &\lesssim  \big(1+ \log \frac{2^{-k'}}{2^{-k}}\big)\frac{2^{-k}}{2^{-k'}}2^{k'N}\\
    &\lesssim  2^{-|k-k'|\varepsilon}
    \frac{2^{(-k'\vee -k)}}{\big(2^{(-k'\vee -k)}+d(x,y)\big)^{N+1}},
\end{align*}
where the last inequality follows from the facts that $0<\varepsilon<1, k'\le k$ and $d(x,y)\le C2^{-k'}.$ This completes the proof of Lemma \ref{le4.2}.
\end{proof}

The proof of Lemma \ref{le4.3} is given as follows.
\begin{proof}[\bf {Proof of Lemma \ref{le4.3}}]
    Let $\psi$ be a function on $\mathbb{R}$ defined by
    $$
    \psi(x)=
    \begin{cases}
    e^{-\frac{1}{1-|x|^2}},& |x|<1,\\
    0,& |x|\ge 1.
    \end{cases}
    $$
    This function belongs to $C_0^\infty(\mathbb{R})$. Consider the function $c(x)$ given by
    $$
    c(x)=\int\limits_{\mathbb{R}^N}\psi(d(x,y))dy,
    $$
    which satisfies $c(x) \ge \int\limits_{\mathbb{R}^N}\psi(|x-y|)dy \gtrsim |B(x,\frac12)| \gtrsim 1$.

Let $\varphi(x,y)=\frac{1}{c(x)}\psi(d(x,y))$. Then $\varphi$ is supported in $\{ (x,y): d(x,y)\le 1\}$ and satisfies $\int\limits_\mathbb{R} \varphi(x,y)dy=1$. For any $\varepsilon>0$, let $\varphi_\varepsilon (x,y)=\frac{1}{\varepsilon^{N}}\varphi(\frac{x}{\varepsilon},\frac{y}{\varepsilon})$. Noting that
    $$ c\bigg(\frac{x}{\varepsilon}\bigg)=\int\limits_{\mathbb{R}^N}\psi\bigg(d\big(\frac{x}{\varepsilon},y\big)\bigg)dy
=\int\limits_{\mathbb{R}^N}\psi\bigg(d\big(\frac{x}{\varepsilon},\frac{y}{\varepsilon}\big)\bigg)\frac{dy}{\varepsilon}=
    \frac1{\varepsilon^{ N}}\int\limits_{\mathbb{R}^N}\psi\bigg(d\big(\frac{x}{\varepsilon},\frac{y}{\varepsilon}\big)\bigg)dy,
    $$
we have
\begin{equation}\label{n5.6}
    \int\limits_{\mathbb{R}^N}\varphi_\varepsilon (x,y)dy=\frac{1}{\varepsilon^{ N}}\int\limits_{\mathbb{R}^N} \frac{1}{c(\frac{x}{\varepsilon})}\psi\bigg(d\big(\frac{x}{\varepsilon},\frac{y}{\varepsilon}\big)\bigg)dy=1.
    \end{equation}

    For any $\delta>0$, and any function $f$ in $L^2_G(\mathbb{R}^N)$, using classical techniques from real analysis, we can find a $G$-invariant continuous function $g(x)$ with support contained in some large Euclidean ball $B(0,R)$ such that $\|f-g\|_2<\delta/2$. %Moreover, $g$ is invariant under the action of $G$, i.e., $g(x)=g(\sigma(x))$ for all $\sigma \in G$.
Clearly, $g$  is uniformly continuous on $\mathbb{R}^N$. Therefore, for any $0<\delta'<\delta\cdot(2\sqrt{|B(0,R+1)|})^{-1}$, there exists a constant $0<\eta<1$ such that $|g(x)-g(y)|<\delta'$ whenever $|x-y|<\eta$. Now for any $\varepsilon>0$, we define $h_\varepsilon (x)=\int\limits_{\mathbb{R}^N}\varphi_\varepsilon (x,y)g(y)dy$. Note that
    $$
    \|h_\varepsilon -g\|_2=\Big(\int\limits_{\mathbb{R}^N}\big|\int\limits_{\mathbb{R}^N}\varphi_\varepsilon (x,y)[g(y)-g(x)]dy\big|^2dx\Big)^\frac12.
    $$
Observe that for any point $(x,y)\in \R^{2N}$ such that $\varphi_\varepsilon (x,y)[g(y)-g(x)]\neq 0$, we have $d(x,y)<\varepsilon$ and $g(x)-g(y)\neq 0$. Choose $\sigma=\sigma_{x,y}\in G$ so that $|x-\sigma(y)|=d(x,y)<\varepsilon$; when $\varepsilon<\eta,$ using the reflection invariance property and uniform continuity of $g$, we conclude that
    $$
     |g(x)-g(y)|=|g(x)-g(\sigma(y))|<\delta'.$$
  Moreover, since $\supp g\subset B(0,R)$ and $g(x)-g(\sigma(y))\neq 0$, at least one of the two points $x$ and $\sigma(y)$ belongs to $B(0,R)$; in either case, we have $|x|<R+1$ (note that $|x-\sigma(y)|<1$).
These observations, together with \eqref{n5.6}, yield that
    $$
    \|h_\varepsilon -g\|_2 \le \delta' \Big(\int\limits_{|x|<R+1}dx\Big)^\frac12=\delta' |B(0, R+1)|^\frac12< \frac{\delta}{2}
    $$
whenever $0<\varepsilon<\eta$.
Consequently, $\|h_\varepsilon -f\|_2\le \|h_\varepsilon -g\|_2+\|g-f\|_2<\delta$ whenever $0<\varepsilon<\eta$, and hence $\lim\limits_{\varepsilon \to 0^+}\|h_\varepsilon -f\|_2=0$.

To finish the proof, it suffices to show that $h_\varepsilon \in C_{G,0}^\eta(\R^N).$ If $d(x,x')\le \varepsilon,$ then
    \begin{align*}
    &|h_\varepsilon(x)-h_\varepsilon(x')|\\
    &=\Big|\int\limits_{\R^N}[\varphi_\varepsilon (x,y)-\varphi_\varepsilon (x',y)]g(y)dy\Big|\\
    &=\Bigg|\int\limits_{\R^N}\frac1{\varepsilon^{ N}}\bigg[\frac1{c(\frac{x}{\varepsilon})}\psi\bigg(\frac{d(x,y)}{\varepsilon}\bigg)-\frac1{c(\frac{x'}{\varepsilon})}\psi\bigg(\frac{d(x',y)}{\varepsilon}\bigg)\bigg]g(y)dy\Bigg|\\
    &\lesssim_\varepsilon\Bigg|\int\limits_{\R^N}\bigg[\bigg(\frac1{c(\frac{x}{\varepsilon})}-\frac1{c(\frac{x'}{\varepsilon})}\bigg)\psi\bigg(\frac{d(x,y)}{\varepsilon}\bigg)+\frac1{c(\frac{x'}{\varepsilon})}\bigg(\psi\bigg(\frac{d(x,y)}{\varepsilon}\bigg)-\psi\bigg(\frac{d(x',y)}{\varepsilon}\bigg)\bigg)\bigg]g(y)dy\Bigg|.\\
    \end{align*}
  Note that $c(\frac{x}{\varepsilon})=\int\limits_{\R^N}\psi(d(\frac{x}{\varepsilon},y))dy\sim_{\varepsilon} \int\limits_{\R^N}\psi(\frac{d(x,y)}{\varepsilon}) dy\gtrsim |\mathcal{O}_B(x,\frac{\varepsilon}{2}) |\sim_\varepsilon 1$ and $c(\frac{x}{\varepsilon})\lesssim |\mathcal{O}_B(x,\varepsilon)|\sim_\varepsilon 1,$
    which implies  $c(\frac{x}{\varepsilon})\sim_\varepsilon 1.$
  Moreover,
    \begin{align*} \bigg|c\bigg(\frac{x}{\varepsilon}\bigg)-c\bigg(\frac{x'}{\varepsilon}\bigg)\bigg|&
=\bigg|\int\limits_{\R^N}\bigg[\psi\bigg(d(\frac{x}{\varepsilon},y)\bigg)-\psi\bigg(d(\frac{x'}{\varepsilon},y)\bigg)\bigg]dy\bigg|\\
    &\sim_{\varepsilon}\bigg|\int\limits_{\R^N}\bigg[\psi\bigg(\frac{d(x,y)}{\varepsilon}\bigg)-\psi\bigg(\frac{d(x',y)}{\varepsilon}\bigg)\bigg]dy\bigg|\\
    &\lesssim_\varepsilon d(x,x'),
    \end{align*}
which further implies
    \begin{align*}  \bigg|\frac1{c(\frac{x}{\varepsilon})}-\frac1{c(\frac{x'}{\varepsilon})}\bigg|=\frac{|c(\frac{x}{\varepsilon})-c(\frac{x'}{\varepsilon})|}{c(\frac{x}{\varepsilon})c(\frac{x'}{\varepsilon})}\lesssim_\varepsilon d(x,x').
    \end{align*}
 Hence, $|h_\varepsilon(x)-h_\varepsilon(x')|\lesssim_{\varepsilon,g} d(x,x').$
Since $h_\varepsilon$ have compact supports, we conclude that  $h_\varepsilon \in C_{G,0}^\eta(\R^N)$
for $0<\eta\le 1.$
    This completes the proof of Lemma \ref{le4.3}.
\end{proof}

Finally, let us give the
\begin{proof}[\bf {Proof of Lemma \ref{le4.4}}]
Observe that $\Pi_b(x,y)$, the kernel of $\Pi_b$, can be written as
    $$\Pi_b(x,y)=\sum\limits_{k\in \Bbb Z}\, \int\limits_{\R^N} {\widetilde D}_k(x,z)D_k(b)(z)S_k(z,y)dz$$
    and a simple calculation shows that $\Pi_b(x,y)$ satisfies all conditions in Definition \ref{def1.1} if $b\in {\rm BMO}_G(\R^N)$. We only show (i), the boundeness of $\Pi_b$ when $b\in {\rm BMO}_G(\R^N)$ on $L_G^2(\R^N)$, as (ii) and (iii) are easy to verify (see \cite{DH} for more details). To this end, we first claim that $\Pi_b(f)$ is $G$-invariant and bounded on $L_G^2(\R^N)$ for $b\in {\rm BMO}_G(\mathbb R^N)$ and $f\in L_G^2(\R^N)$. Indeed, it is easy to see that for $f\in L_G^2(\R^N)$ and each $k, {\widetilde D}_k\{D_k(b)(\cdot)S_k(f)(\cdot)\}(x)\in L_G^2(\R^N).$ Moreover, for every $f,g\in L_G^2(\R^N)$ and any fixed positive integer $M$, we have
    \begin{align}\label{Carleson}
    &|\langle \sum\limits_{|k|\leq M}{\widetilde D}_k\{ D_{k}(b)(\cdot)S_k(f)(\cdot)\}, g \rangle|=\Bigg|\int\limits_{\R^N}\sum\limits_{|k|\leq M}
    S_k(f)(y)D_{k}(b)(y){\widetilde D}_k(g)(y)dy\Bigg|\\
    &\le \Bigg(\int\limits_{\R^N}\sum\limits_{k}
    |S_k(f)(y)D_{k}(b)(y)|^2dy\Bigg)^{1\over2}\Bigg(\int\limits_{\R^N}\sum\limits_{k}
    |{\widetilde D}_k(g)(y)|^2dy\Bigg)^{1\over2}.\nonumber
    \end{align}
    It is straightforward  to see that the second factor on the right-hand side above is bounded by
    a constant multiple of $\|g\|_{ L_G^2(\R^N)}$, due to the $L_G^2$-boundedness of the Littlewood--Paley square function.

    Thus, it suffices to consider the first factor. Note that
    \begin{align*}
    \int\limits_{\R^N}\sum\limits_{k}
    |S_k(f)(y)D_{k}(b)(y)|^2dy= \int_0^\infty \sum\limits_{k}\int\limits_{\{y:|S_k(f)(y)|^2>\lambda\}}
    |D_{k}(b)(y)|^2dyd\lambda. \\
    \end{align*}
We denote $E_\lambda^*:=\{ x\in \mathbb R^N:  f^*(x)>\lambda \},$
    where $f^*(x):=\sup\limits_{(y,k):\ |x-y|<2^{-k}}|S_k(f)(y)|^2$. Performing the Whitney decomposition of $E_\lambda^*$, we have
    $E_\lambda^* = \bigcup\limits_{j=1}^\infty Q_j,$ where each $Q_j$ is a cube with center $x_{Q_j}$ and
 side length $l_{Q_j}.$
    We claim that
    $$S:=\sum\limits_{k}
    \int\limits_{\{y:|S_k(f)(y)|^2>\lambda\}}
    |D_{k}(b)(y)|^2dy\lesssim \|b\|_{\rm BMO_G}^2\cdot |E_\lambda^*|.
    $$
    Indeed, if $|S_k(f)(y)|^2>\lambda$, then
    $f^*(x)>\lambda$ for every $x$ satisfying $|x-y|<2^{-k},$ i.e., $B(y,2^{-k})\subseteq E_\lambda^* =\bigcup\limits_{j=1}^\infty Q_j.$
    Suppose $y\in Q_j$ for some $j,$ then by the property of the Whitney decomposition, we have $\ell(Q_j)\ge c2^{-k}$ for some constant $c>0.$
We write
    \begin{align*}
    S&\le \sum\limits_{j=1}^\infty  \sum\limits_{k:\ell(Q_j)\ge c2^{-k}}
    \int\limits_{\{y\in Q_j:|E_k(g)(y)|^2>\lambda\}}
    |D_{k}(b)(y)|^2dy\\
    &\le \sum\limits_{j=1}^\infty  \sum_{k:\ell(Q_j)\ge c2^{-k}}
    \int\limits_{\{y\in Q_j:|E_k(g)(y)|^2>\lambda\}}
    |D_{k}(b_1)(y)|^2dy\\
    &\qquad +\sum_{j=1}^\infty  \sum\limits_{k:l(Q_j)\ge c2^{-k}}
    \int\limits_{\{y\in Q_j:|E_k(g)(y)|^2>\lambda\}}
    |D_{k}(b_2)(y)|^2dy\\
    &=:Term_1+Term_2,
    \end{align*}
    where $b_1(x) := (b(x)-b_B)\chi_{{\mathcal O}(2\sqrt{N}B)}(x)$, $b_2(x) := (b(x)-b_B)\chi_{\mathbb R^N\backslash {\mathcal O}(2\sqrt{N}B)}(x)$ and $b_B=\frac1{|B|}\int\limits_B b(x)dx$ with $B=B(x_j,l(Q_j)),$ ${\mathcal O}(2\sqrt{N}B)=\{x: d(x,x_{Q_j})\le 2\sqrt{N}\ell(Q_j)\}.$
    Via the $L^2_G$-boundedness of the Littlewood--Paley square function, we know that
    \begin{align*}
    &Term_1\le\sum\limits_{j=1}^\infty  \sum\limits_{k\in \Z}
    \int\limits_{\R^N}
    |D_{k}(b_1)(y)|^2dy \le C\sum\limits_{j=1}^\infty\|b_1\|_{ L^2(\R^N) }^2.
    \end{align*}
    Noting that $b(x)=b(\sigma(x))$ for each $\sigma\in G$ we have
    \begin{align*}
    \|b_1\|_{ L^2(\R^N) }^2&=\int\limits_{x:d(x,x_j)\le 2\ell(Q_j)}|b(x)-b_B|^2dx\le \sum\limits_{\sigma\in G}\int\limits_{x:|\sigma(x)-x_j|\le 2\ell(Q_j)}|b(x)-b_B|^2dx\\
    &=|G|\cdot \int\limits_{x:|x-x_j|\le 2\ell(Q_j)}|b(x)-b_B|^2dx\lesssim \|b\|_{\rm BMO}^2 \cdot |Q_j|,
    \end{align*}
    which yields
    \begin{align*}
    &Term_1\lesssim \|b\|_{\rm BMO}^2\cdot |E_\lambda^*|.
    \end{align*}

    For $Term_2$, by using the size estimate of the kernel of $D_k$,
    we have
    \begin{align*}
    Term_2
    &\lesssim  \sum\limits_{j=1}^\infty  \sum\limits_{k:l(Q_j)\ge c2^{-k}}\int\limits_{Q_j}\bigg| \int\limits_{\mathbb R^N\backslash {\mathcal O}(2\sqrt{N}B)} {2^{-k\varepsilon} \over (2^{-k}+d(y,z))^{N+\varepsilon}} |b(z)-b_B|dz\bigg|^2dy\\
    &\lesssim \sum\limits_{j=1}^\infty  \sum\limits_{k:l(Q_j)\ge c2^{-k}}\int\limits_{Q_j}\bigg| \sum\limits_{u=0}^\infty\int\limits_{\{z\in \R^N:2^{u}\ell(Q_j)\le d(z,x_j)\le 2^{u+1}\ell(Q_j)\}} {2^{-k\varepsilon} \over (2^u\ell(Q_j))^{N+\varepsilon}} |b(z)-b_B|dz\bigg|^2dy.
    \end{align*}

    Noting that
    \begin{align*}
    &\int\limits_{\{z:d(z,x_j)\le 2^{u+1}\ell(Q_j)\}} |b(z)-b_B|dz\le\sum\limits_{\sigma\in G}\int\limits_{\{z:|\sigma(z)-x_j|\le 2^{u+1}\ell(Q_j)\}} |b(z)-b_B|dz\\
    &=|G|\cdot \int\limits_{\{z:|z-x_j|\le 2^{u+1}\ell(Q_j)\}} |b(z)-b_B|dz\\
    &\lesssim \int\limits_{2^{u+1}B} |b(z)-b_{2^{u+1}B}|dz+|2^{u+1}B|\sum\limits_{i=0}^{u}  |b_{2^{i+1}B}-b_{2^iB}|\\
    &\lesssim 2^{uN}(u+1)|B|\cdot \|b\|_{\rm BMO},
    \end{align*}
we derive
    \begin{align*}
    Term_2
    &\lesssim \|b\|_{\rm BMO}^2 \cdot\sum\limits_{j=1}^\infty  \sum\limits_{k:l(Q_j)\ge c2^{-k}}\int\limits_{Q_j}\bigg| \sum_{u=0}^\infty {2^{-k\varepsilon} \over (2^u\ell(Q_j))^{N+\varepsilon}} 2^{uN}(u+1)|B| \bigg|^2dy\\
    &\lesssim \|b\|_{\rm BMO}^2 \cdot\sum\limits_{j=1}^\infty  \sum\limits_{k:l(Q_j)\ge c2^{-k}}|Q_j|\bigg| {2^{-k\varepsilon} \over \ell(Q_j)^{\varepsilon}}\bigg|^2\\
    &\lesssim \|b\|_{\rm BMO}^2 \cdot\sum\limits_{j=1}^\infty  |Q_j|\lesssim \|b\|_{\rm BMO}^2\cdot |E_\lambda^*|.
    \end{align*}

    Combining the estimates for $Term_1$ and $Term_2$ above, we get
    $$\sum\limits_{k}
    \int\limits_{\{y:|S_k(f)(y)|^2>\lambda\}}
    |D_{k}(b)(y)|^2dy\lesssim \|b\|_{\rm BMO}^2\cdot |E_\lambda^*|,
    $$
    which implies
    \begin{align*}
    &&\int\limits_{\R^N}\sum\limits_{k}
    |S_k(f)(y)D_{k}(b)(y)|^2dy\lesssim \|b\|_{\rm BMO}^2 \int_0^\infty |E_\lambda^*| d\lambda
    =  \| b\|_{{\rm BMO}}^2\int\limits_{\R^N} f^*(x)dx
    \lesssim \| b\|_{{\rm BMO}}^2 \|f\|_{ L^2}^2.
    \end{align*}
This shows that $\Pi_b(f)$ is $G$-invariant and is bounded on $L_G^2(\R^N).$  The proof of  Lemma \ref{le4.4} is complete.
\end{proof}

\bigskip

\bigskip

\noindent {\bf Acknowledgement}: Li is supported by ARC DP 220100285.  Tan is supported by National Natural Science Foundation of China (No. 12071272). Wu is supported by National Natural Science Foundation  of China (No. 12071473).

\bigskip
\bigskip

\bigskip
\bigskip

\medskip
\vskip 0.5cm
\medskip
\vskip 0.5cm

\noindent Department of Mathematics, Auburn University, AL
36849-5310, USA.

\noindent {\it E-mail address}: \texttt{hanyong@auburn.edu}

\medskip
\vskip 0.5cm
\noindent Department of Mathematics, Macquarie University, NSW, 2109, Australia.

\noindent {\it E-mail address}: \texttt{ji.li@mq.edu.au}

\medskip
\vskip 0.5cm

\noindent  Department of Mathematics, Shantou University, Shantou,
515063, R. China.

\noindent {\it E-mail address}: \texttt  {cqtan@stu.edu.cn }

\smallskip
\vskip 0.5cm

\noindent  Department of Mathematics, School of Science, Westlake University, Hangzhou, Zhejiang 310030, P. R. China.

\noindent {\it E-mail address}: \texttt{wangzipeng@westlake.edu.cn }

\smallskip
\vskip 0.5cm

\noindent  Department of Mathematics, China University of Mining \& Technology,
Beijing 100083, China.

\noindent {\it E-mail address}: \texttt{wuxf@cumtb.edu.cn}

\end{document}